\theoremstyle{definition}
\newtheorem{definition}{Definition}[section]
\newtheorem{example}[definition]{Example}
\theoremstyle{theorem}
\newtheorem{theorem}[definition]{Theorem}
\newtheorem*{theorem*}{Theorem}
\newtheorem*{corollary*}{Corollary}
\newtheorem{proposition}[definition]{Proposition}
\newtheorem{lemma}[definition]{Lemma}
\newtheorem{corollary}[definition]{Corollary}
\theoremstyle{remark}
\newtheorem*{remark}{Remark}
\newtheorem*{notation}{Notation}
\newtheorem*{convention}{Convention}
\def\includegraphics{}
\begin{document}

\begin{frontmatter}

\begin{fmbox}
\dochead{Research}


\title{Game semantics of Martin-L\"of type theory}


\author[
   addressref={aff1},                   
   corref={aff1},                       
   email={yamad041@umn.edu}   
]{\inits{NY}\fnm{Norihiro} \snm{Yamada}} \\
This is a preprint submitted to \emph{Research in the Mathematical Sciences (RMS).}


\address[id=aff1]{
  \orgname{University of Minnesota}, 
  \city{Minneapolis},                              
  \postcode{MN 55455},                                
  \cny{USA}                                    
}
\if0
\address[id=aff2]{%
  \orgname{Marine Ecology Department, Institute of Marine Sciences Kiel},
  \street{D\"{u}sternbrooker Weg 20},
  \postcode{24105}
  \city{Kiel},
  \cny{Germany}
}
\fi

\if0
\begin{artnotes}
\note[id=n1]{Correspondence:} 
\end{artnotes}
\fi
\end{fmbox}


\begin{abstractbox}

\begin{abstract} 
We present new \emph{game semantics} of \emph{Martin-L\"{o}f type theory (MLTT)} equipped with One-, Zero-, N-, Pi-, Sigma- and Id-types. 
Our game semantics interprets MLTT \emph{more accurately} than existing ones. 
Another advantage of our game semantics over existing ones is its interpretation of Sigma-types that is \emph{direct} and \emph{compatible with the game semantics of product types}.
Besides, its mathematical structure is novel and useful; e.g., the category of our games has all \emph{finite limits}, which is a key step to an extension of the present work to \emph{homotopy type theory}, and our games interpret \emph{subtyping on dependent types} for the first time as game semantics.
Finally, we provide a new, game-semantic proof of the \emph{independence of Markov's principle} from MLTT, which demonstrates an advantage of our game semantics over extensional models of MLTT such as the effective topos.
\end{abstract}


\begin{keyword}
\kwd{game semantics}
\kwd{Martin-L\"{o}f type theory}
\kwd{constructive mathematics}

\end{keyword}

\if0
\begin{keyword}[class=AMS]
\kwd[primary ]{03F65}
\kwd[; secondary ]{03B70}
\end{keyword}
\fi

\end{abstractbox}
%

\end{frontmatter}





\tableofcontents

\if0
\begin{convention}
To clear ambiguity, we call a mere assignment of semantic objects to syntactic objects an \emph{interpretation} or \emph{semantics}, and an interpretation that respects target syntactic properties (e.g., existence of a derivation of equality between terms) by semantic properties (e.g., agreement of the assigned semantic objects) a \emph{model}. 
(I.e., a model means a \emph{sound} interpretation.)
We say that an interpretation \emph{models} a type theory if it is a model of the type theory, and it \emph{models} or \emph{validates} (resp. \emph{refutes}) an axiom or rule if it respects (resp. does not respect) the axiom or rule. 
\end{convention}
\fi

\section{Introduction}
\subsection{Martin-L\"{o}f type theory and the meaning explanation}
\label{MLTT}
\emph{Martin-L\"{o}f type theory (MLTT)} \cite{martin1975intuitionistic,martin1998intuitionistic} is one of the best-known formal systems for \emph{constructive mathematics} \cite{troelstra1988constructivism}, which is comparable to set theory \cite{zermelo1908untersuchungen,fraenkel1922grundlagen} for classical mathematics.
MLTT is also a programming language \cite{martin1982constructive} that is a generalisation of the \emph{simply-typed lambda-calculus (STLC)} \cite{church1940formulation} along the generalisation of (intuitionistic) propositional logic to predicate logic under the \emph{Curry-Howard isomorphisms} \cite{sorensen2006lectures}. 
By this computational nature, MLTT and similar formal systems enable computer formalisations of mathematics and its applications to programming \cite{constable1986implementing,hottbook}. 

Like set theory is explained informally by \emph{sets}, the conceptual foundation of MLTT is \emph{computations} in an informal sense. 
That is, the fundamental idea of MLTT is to regard objects and proofs in constructive mathematics uniformly as computations, and MLTT is a syntactic formalisation of this foundational idea \cite{martin1982constructive}.
Hence, objects and proofs in MLTT are unified into \emph{terms}, where formulas are called \emph{types}.
This standard, informal semantics of MLTT is called the \emph{meaning explanation} \cite[\S 5]{dybjer2016intuitionistic}.

However, MLTT is not always the best formalisation of this conceptual foundation of constructive mathematics since it is an intricate formal system that inevitably contains superficial syntactic details.\footnote{A syntactic formalisation is also unsatisfactory from the \emph{syntax-first-view}, i.e., the view that semantic concepts come first, and syntax merely provides notations.}
In other words, the intuition behind MLTT is often blurred by the complexity and the syntactic nature of MLTT. 
In addition, the syntactic complexity makes it difficult to study the meta-theory of MLTT.

Accordingly, \emph{mathematical semantics} \cite{amadio1998domains} of MLTT that faithfully formalises the meaning explanation is strongly desired since such semantics would  accurately and directly (or non-inductively) describe the intuition behind MLTT, abstracting the syntactic details.
It would not only deepen our understanding of MLTT in this way but also suggest improvements and extensions of MLTT like \emph{coherence spaces} by Girard \cite{girard1987linear} led to \emph{linear logic} \cite{girard1987linear}, and the \emph{groupoid model} by Hofmann and Streicher \cite{hofmann1998groupoid} to \emph{homotopy type theory (HoTT)} \cite{hottbook}.
Besides, mathematical semantics has been highly effective for the meta-theoretic study of MLTT; e.g., see \cite{streicher2012semantics}.

\if0
Note that the meaning explanation is \emph{informal} in nature and formalised \emph{syntactically} by MLTT. 
Also, we may think of computations as certain processes, which must be an \emph{intensional} concept.
Thus, a mathematical, semantic, intensional formalisation of the meaning explanation, if achieved, would promote a deeper understanding of MLTT and provide effective techniques for the study of the syntax. 

Nevertheless, such a semantics of MLTT has been scarce. 
For instance, representative mathematical formalisations of the meaning explanation are \emph{realisability} and \emph{syntactic} models of MLTT \cite{streicher2012semantics,smith1984interpretation,beeson2012foundations,coquand1998realizability}. 
However, realisability models interpret terms as realisable yet \emph{extensional}\footnote{The extensional nature prohibits one from proving the \emph{consistency of MLTT with Church's thesis} by realisability models \cite{maietti2009minimalist,ishihara2018consistency}, which we prove in another article.} maps, and syntactic models are still \emph{syntax}, which in this sense is not very different from MLTT.
\fi

\subsection{Game semantics}
\label{IntroGameSemantics}
\emph{Game semantics} \cite{abramsky1997semantics,hyland1997game} is a particular class of mathematical semantics of logic and computation that models types and terms by \emph{games} and \emph{strategies}, respectively. 

Its strong point is its \emph{conceptual naturality}: `Logic is the study of reasoning' \cite[p.~1]{shoenfield1967mathematical}, where one can regard `reasoning' as \emph{dialogical arguments} between \emph{Player} (or a mathematician) and \emph{Opponent} (or an oracle), and game semantics formalises this intuition.
This game-semantic view on logic is also in harmony with the meaning explanation since dialogical arguments are a certain kind of \emph{computations}.

Another strong advantage of game semantics is its \emph{precision} in modelling syntax as various \emph{full completeness/abstraction} results \cite{curien2007definability} in the literature demonstrate. 
This precision is due to its \emph{intensionality}: Game semantics captures the \emph{processes themselves} underlying terms rather than their extensions such as functions.

Finally, the concrete nature of game semantics enables its \emph{algorithmic applications} to program analysis and verification (i.e., meta-theories of programs) \cite{abramsky2002algorithmic}.

\subsection{Main results}
\label{MainResults}
To summarise the points so far, mathematical semantics of MLTT that advances our understanding of MLTT, promotes its improvements and extensions, and/or clarifies its meta-theory is strongly desired, and game semantics seems perfect for this role by its conceptual naturality, harmony with the meaning explanation, precision in modelling syntax and algorithmic applications. 
Also, game semantics models \emph{effects} \cite{plotkin2004computational} and \emph{linear logic} \cite{girard1987linear} in a highly systematic way (see \cite{abramsky1999game} for the details); thus, game semantics of MLTT, if any, may lead to MLTT with effects and linear typing.

However, although game semantics of various logics and computations has been given, it is quite difficult to establish game semantics of MLTT. 
The main challenge in game semantics of MLTT is to model the \emph{extensional} type dependency in Sigma-types by \emph{intensional} processes in games; see the beginning of \S\ref{PredicateGames}.
In fact, this problem had been open for more than twenty years, and even today its definitive solution is yet to emerge though a few candidates have arisen recently \cite{abramsky2015games,blot2018extensional}; see \S\ref{RelatedWorkAndOurContributions}.

Hence, we aim to provide another candidate for game semantics of MLTT with the hope that it sheds new light on this problem. 
Motivated in this way, we prove:
\begin{theorem*}[Game semantics of MLTT]
\label{MainTheorem}
There is new game semantics of MLTT with One-, Zero-, N-, Pi-, Sigma- and Id-types (\S\ref{GameSemanticCwF}--\ref{GameSemanticTypeFormers}). 
\end{theorem*}

Our key idea for this theorem is to generalise games so that they can model Sigma-types, while we keep strategies \emph{unchanged} so that we retain the advantages of game semantics such as \emph{intensionality} (\S\ref{IntroGameSemantics}).
See the beginning of \S\ref{PredicateGames} for the outline of this idea.
Also, see \S\ref{RelatedWorkAndOurContributions} for the advantages of this method over existing ones \cite{abramsky2015games,blot2018extensional}. 

We also illustrate the \emph{utility} of our game semantics by giving a new proof of:
\begin{corollary*}[Independence of Markov's principle \cite{mannaa2017independence}]
\label{MainCorollary}
Markov's principle \cite{markov1962constructive} is independent from MLTT equipped with the types listed in Theorem~\ref{MainTheorem} (\S\ref{Independence}).
\end{corollary*}

The method used by Mannaa and Coquand \cite{mannaa2017independence} is \emph{syntactic}, while our \emph{semantic} approach provides a new, intuitive argument on why the independence holds.
This corollary also illustrates an advantage of game semantics over other computational models since, e.g., the \emph{effective topos} \cite{hyland1982effective} cannot show the independence.
Moreover, by the \emph{non-inductive} nature of game semantics, the present method would be easily applied to the independence of Markov's principle from various \emph{extensions} of MLTT.

Finally, the novel mathematical structure of our game semantics enables:
\begin{corollary*}[Game semantics of subtyping on dependent types]
The game semantics given by Theorem~\ref{MainTheorem} models subtyping \cite[\S 15]{pierce2002types} on dependent types (\S\ref{GameSemanticsOfSubtyping}). 
\end{corollary*}

\subsection{Related work and our contributions}
\label{RelatedWorkAndOurContributions}
Abramsky et al. have established the first game semantics of MLTT equipped with One-, Pi-, Sigma-, Id- and finite inductive types \cite{abramsky2015games,vakar2018game}.
Its significance is that it is the first \emph{intensional} model of MLTT, and thus it stands in sharp contrast with other computational models such as realisability and domain models \cite{streicher2012semantics,palmgren1990domain}, which are extensional.
Their main result is a certain kind of \emph{full completeness}. 
However, they only interpret Sigma-types \emph{indirectly} and \emph{inductively} by a \emph{list} construction and the \emph{adjunction} between Pi- and Sigma-types.
Specifically, they interpret a Sigma-type $\mathsf{\vdash \Sigma_{x:C}D(x) \ type}$\footnote{For simplicity, here we only consider the \emph{empty} context.} by the list $(C, D)$ of the game $C$ that models the simple type $\mathsf{\vdash C \ type}$ and the family $D=(D(x))_{x:C}$ of games $D(x)$ indexed by strategies $x$ on $C$ that models the dependent type $\mathsf{x:C \vdash D(x) \ type}$, and interpret a term of the form $\mathsf{\vdash \langle c, d \rangle : \Sigma_{x:C}D(x)}$ by the list $(c, d)$ of the strategies $c$ and $d$ that respectively model the terms $\mathsf{\vdash c : C}$ and $\mathsf{\vdash d : D(c)}$, and a term of the form $\mathsf{z : \Sigma_{x:C}D(x) \vdash e(z) : E(z)}$ by identifying it with the one $\mathsf{x : C, y : D(x)  \vdash e(\langle x, y \rangle) : E(\langle x, y \rangle)}$.
As a result, they interpret types and terms by the \emph{lists} of (families of) games and strategies, respectively; this method merely simulates the \emph{term model} \cite[\S 2.4]{hofmann1997syntax} and drops the non-inductive nature of game semantics.
Another undesirable feature of this approach is that it identifies the types $\mathsf{\Pi_{z:\Sigma_{x : A} B(x)}C(z)}$ and $\mathsf{\Pi_{x : A} \Pi_{y : B} C(\langle x, y \rangle)}$.
Finally, for \emph{composing winning strategies}, they use the \textrm{O-sat} operation \cite[Remark~4.5]{vakar2018game}; however, it generates a significant \emph{gap} between MLTT and their model. 
In fact, this method only works for a very specific class of finite inductive types \cite[Figure~7]{vakar2018game}; see \S\ref{DependentPairSpace} on this point.

Another related work is the denotational model of MLTT \cite{blot2018extensional} by Blot and Laird based on \emph{concrete date structures} and \emph{sequential algorithms} \cite{berry1982sequential}. 
They interpret Boolean-, Pi- and Sigma-types and a universe though their interpretation of the universe is not by game semantics but by domain theory.
Their main results are certain \emph{full completeness/abstraction}. 
Notably, they \emph{directly} interpret Sigma-types without the list construction, overcoming the problem of the preceding work. 
However, it is possible to play on \emph{both} sides of their interpretation of a Sigma-type within a single play, which is far from the game semantics \cite{abramsky2000full,hyland2000full,mccusker1998games} (and even the \emph{graph game semantics} \cite{hyland2002games}) of product types. 
Thus, it is arguable if their model properly captures the generalisation of product types to Sigma-types.
As a more pragmatic disadvantage, their model admits control operators or \emph{classical reasonings}, but the logical part of MLTT is \emph{intuitionistic}; i.e., there is a \emph{gap} between MLTT and their model.
Besides, their model does not achieve the \emph{linear decomposition} of function types \cite{girard1987linear} or the \emph{characterisation of effects} by constraints on strategies, which are both strong advantages of game semantics \cite{abramsky1999game}.
Last but not least, their interpretation of Id-types by finite tuples of Boolean-type sketched in \cite[\S 9]{blot2018extensional} does not work in the presence of N-type since the set $\mathbb{N}$ of all natural numbers is unbounded.

Thus, each of the existing approaches to game semantics of MLTT has pros and cons, and we have not reached a consensus on which option should be a definitive solution.
In this context, we offer the third method with the novel features listed below, hoping that it would eventually lead to a definitive solution in the future.

First, we achieve the first game semantics of both N- and Id-types.
The interpretation of these types enables us to show the independence of Markov's principle.

Second, our games are a modest generalisation of a standard variant, \emph{McCusker's games} \cite{mccusker1998games}, and we model types and terms by such games and (ordinary) strategies, not lists of them, respectively.
As a result, we retain the \emph{syntax-independence} and the \emph{non-inductive} nature of game semantics.
Besides, we \emph{directly} model Sigma-types by our games, where a play occurs only in \emph{either} side.
Hence, we overcome the main shortcomings of the preceding methods. 
For the basic idea, see the beginning of \S\ref{PredicateGames}.

Third, our method inherits the linear decomposition of function types and the characterisation of effects by constraints on strategies in McCusker's one \cite{abramsky1999game}.

Fourth, the novel mathematical structure of our games enables us to dispense with the \textrm{O-sat} operation unlike Abramsky et al. so that we can interpret the type dependency of a standard class of Pi- and Sigma-types \emph{more accurately}; see \S\ref{DependentPairSpace}.

Finally, the mathematical structure of our games is novel and useful. 
For instance, the category of our games has all \emph{finite limits} (Corollary~\ref{CorGameSemanticLimits}), while that of existing games does not.
This novel structure enables us to internalise a certain notion of $\infty$-groupoids in the category of our games, which is a key step to extend the present work to HoTT \cite{yamada2021game}.
Moreover, we accomplish the first game semantics of \emph{subtyping on dependent types} (\S\ref{GameSemanticsOfSubtyping}).
On the other hand, in order to focus on the main idea of the present work, we leave it to another article to interpret universes.

\subsection{Concluding remarks}
\label{ConcludingRemarks}
Due to the novel mathematical structure of our games, one might misunderstand that our semantics is close to extensional models such as realisability and domain models.
However, since our strategies are just the \emph{ordinary} ones (\S\ref{RelatedWorkAndOurContributions}), our method inherits the advantages of standard game semantics such as \emph{intensionality} (\S\ref{IntroGameSemantics}).
In fact, the intensional features of the preceding game semantics \cite[\S 1]{abramsky2015games} are mostly valid in ours (\S\ref{Intensionality}); e.g., both \emph{refute function extensionality}. 
Also, our model refutes Markov's principle by its intensionality (\S\ref{Independence}), while the effective topos does not.

Last but not least, we do not prove full completeness for the following reasons.
First, the standard syntax of MLTT, specifically N-type, is not very suited to fully complete game semantics.
For instance, the full completeness results \cite{abramsky2015games,vakar2018game,blot2018extensional} are on modifications of MLTT, and they exclude N-type.
However, our main topic is MLTT itself (\S\ref{MLTT}), and thus we leave full completeness on a modification of MLTT as future work.
Second, our priority is more on an interpretation of N-type than full completeness without N-type since our motivation comes from foundations of mathematics (\S\ref{MLTT}), for which N-type is crucial.
Finally, one of our aims is to provide tools for the study of MLTT (\S\ref{MLTT}), but full completeness is not necessarily the most important result for this aim.
For example, the fully complete model \cite{blot2018extensional} cannot show the independence of Markov's principle since it admits classical reasonings.
Hence, we instead show the \emph{utility} of our model by proving the independence (\S\ref{Independence}).

\subsection{The structure of the present article}
The rest of this article proceeds as follows.
We first recall McCusker's games and strategies in \S\ref{GamesAndStrategies} and generalise the games in \S\ref{PredicateGames}. 
We then interpret MLTT by the generalised games and strategies in \S\ref{GameSemanticsOfMLTT}, where we analyse the intensionality of our game semantics in \S\ref{Intensionality}, give a game-semantic proof of the independence of Markov's principle from MLTT in \S\ref{Independence}, and interpret subtyping on dependent types in \S\ref{GameSemanticsOfSubtyping}.

\section{Games and strategies for simple type theories}
\label{GamesAndStrategies}
We first recall McCusker's games and strategies for simple type theories \cite{mccusker1998games}, which the present work is based on.
We select this variant for the following reasons.
First, it combines the strong points of the two best-known variants: the \emph{linear decomposition} of function types \cite{girard1987linear} achieved by \emph{AJM-games} \cite{abramsky2000full} and the characterisation of \emph{effects} by constraints on strategies \cite{abramsky1999game} that utilises \emph{pointers} in \emph{HO-games} \cite{hyland2000full} (originally introduced in \cite{coquand1995semantics}).
Our games inherit these advantages so that they would shed new light on the problem of combining MLTT and linear logic and/or effects. 
Second, pointers enable us to refine game semantics into a \emph{model of computation} \cite{yamada2019game}, which is highly desirable as a mathematical foundation of \emph{constructive} mathematics (\S\ref{MLTT}).

We assume that the reader is familiar with McCusker's games and strategies, and leave more expositions and examples to the gentle introduction \cite{abramsky1999game}.
We henceforth call McCusker's games and strategies respectively \emph{games} and \emph{strategies}. 

We first recall two preliminary concepts in \S\ref{ArenasAndLegalPositions}, and then games and strategies in \S\ref{SubsectionGamesAndStrategies}. 
We finally recall standard constructions on games and strategies in \S\ref{ConstructionsOnGamesAndStrategies}.

\if0
\begin{remark}
McCusker's interpretation of \emph{of-course} $\oc$ \cite{girard1987linear} is \emph{ad-hoc} as it does not form a comonad \cite[pp.~47--48]{mccusker1998games}. 
We can remedy this problem just by adding equivalence relations between positions \cite[\S 3.6]{mccusker1998games}, but it brings significant technical overheads.
Hence, we do not employ the equivalence relations so that our ideas are as visible as possible; it is just a routine to combine this work with the equivalence relations.
\end{remark}
\fi

\begin{notation}
We use the following notations throughout the present article:
\begin{itemize}

\item We use bold small letters $\boldsymbol{s}, \boldsymbol{t}, \boldsymbol{u}, \boldsymbol{v}$, etc. for sequences, in particular $\boldsymbol{\epsilon}$ for the \emph{empty sequence}, and small letters $a, b, m, n, x, y$, etc. for elements of sequences;

\item We define $\overline{n} \colonequals \{ \, 1, 2, \dots, n \, \}$ for each $n \in \mathbb{N}^+ \colonequals \mathbb{N} \setminus \{ 0 \}$, and $\overline{0} \colonequals \emptyset$;

\item We write $x_1 x_2 \dots x_{|\boldsymbol{s}|}$ for $\boldsymbol{s} = (x_1, x_2, \dots, x_{|\boldsymbol{s}|})$, where $|\boldsymbol{s}|$ is the \emph{length} of $\boldsymbol{s}$, define $\boldsymbol{s}(i) \colonequals x_i$ ($i \in \overline{|\boldsymbol{s}|}$) and write $a \in \boldsymbol{s}$ if $a = \boldsymbol{s}(j)$ for some $j \in \overline{|\boldsymbol{s}|}$;


\item A \emph{concatenation} of sequences $\boldsymbol{s}$ and $\boldsymbol{t}$ is represented by their juxtaposition $\boldsymbol{s}\boldsymbol{t}$ (or $\boldsymbol{s} . \boldsymbol{t}$), but we often write $a \boldsymbol{s}$, $\boldsymbol{t} b$, $\boldsymbol{u} c \boldsymbol{v}$ for $(a) \boldsymbol{s}$, $\boldsymbol{t} (b)$, $\boldsymbol{u} (c) \boldsymbol{v}$, and so on;


\item We write $\mathrm{Even}(\boldsymbol{s})$ (resp. $\mathrm{Odd}(\boldsymbol{s})$) if $\boldsymbol{s}$ is of even- (resp. odd-) length, and given a set $S$ of sequences and $\mathsf{P} \in \{ \mathrm{Even}, \mathrm{Odd} \}$, we define $S^\mathsf{P} \colonequals \{ \, \boldsymbol{s} \in S \mid \mathsf{P}(\boldsymbol{s}) \, \}$;

\item We write $\boldsymbol{s} \preceq \boldsymbol{t}$ if $\boldsymbol{s}$ is a \emph{prefix} of $\boldsymbol{t}$, and given a set $S$ of sequences, $\mathrm{Pref}(S)$ for the set of all prefixes of sequences in $S$, i.e., $\mathrm{Pref}(S) \colonequals \{ \, \boldsymbol{s} \mid \exists \boldsymbol{t} \in S . \, \boldsymbol{s} \preceq \boldsymbol{t} \, \}$;




\item We use informal `tags' $(\_)_{[i]}$ ($i \in \mathbb{N}$) for clarity (e.g., see Definition~\ref{DefConstructionsOnStrategies}).



\end{itemize}
\end{notation}

\subsection{Arenas and legal positions}
\label{ArenasAndLegalPositions}
A \emph{game} is a certain kind of a \emph{rooted directed acyclic graph}, whose paths from a root represent possible developments or \emph{positions} in a `game in the ordinary sense' (e.g., chess). 
These positions are finite sequences of vertices or \emph{moves}, and a \emph{play} in the game proceeds as its participants alternately perform moves along a position. 
It is conventional to identify each game with the set of all positions in the game.
We focus on standard \emph{two-person} games between \emph{\bfseries Player (P)} (or a \emph{mathematician}) and \emph{\bfseries Opponent (O)} (or an \emph{oracle}), in which \emph{O always starts a play}. 

Technically, games are based on two preliminary concepts: \emph{arenas} and \emph{legal positions}. An arena defines the basic components of a game, which in turn induces legal positions of the arena that specify the basic rules of the game in the sense that each position of the game must be legal. 
Let us first recall these two concepts.

\begin{definition}[Moves]
\label{DefMoves}
Let us fix, throughout the present work, arbitrary pairwise distinct symbols $\mathsf{O}$, $\mathsf{P}$, $\mathsf{Q}$ and $\mathsf{A}$, and call them \emph{\bfseries labels}.
A \emph{\bfseries move} is any triple $m^{xy} \colonequals (m, x, y)$ such that $x \in \{ \mathsf{O}, \mathsf{P} \}$ and $y \in \{ \mathsf{Q}, \mathsf{A} \}$. 
We usually abbreviate moves $m^{xy}$ as $m$, and instead define $\lambda(m) \colonequals xy$, $\lambda^\mathsf{OP}(m) \colonequals x$ and $\lambda^\mathsf{QA}(m) \colonequals y$.

We call a move $m$ an \emph{\bfseries O-move} if $\lambda^\mathsf{OP}(m) = \mathsf{O}$, a \emph{\bfseries P-move} if $\lambda^\mathsf{OP}(m) = \mathsf{P}$, a \emph{\bfseries question} if $\lambda^\mathsf{QA}(m) = \mathsf{Q}$, and an \emph{\bfseries answer} if $\lambda^\mathsf{QA}(m) = \mathsf{A}$.
\end{definition}

\begin{definition}[Arenas \cite{hyland2000full,mccusker1998games}]
\label{DefArenas}
An \emph{\bfseries arena} is a pair $G = (M_G, \vdash_G)$ such that
\begin{itemize}
\item $M_G$ is a set of moves;

\item $\vdash_G$ is a subset of the cartesian product $(\{ \star \} \cup M_G) \times M_G$, where $\star$ (or represented more precisely by $\star_G$) is an arbitrarily fixed element such that $\star \not \in M_G$, called the \emph{\bfseries enabling relation}, that satisfies
\begin{itemize}

\item \textsc{(E1)} If $\star \vdash_G m$, then $\lambda (m) = \mathsf{OQ}$;

\item \textsc{(E2)} If $m \vdash_G n$ and $\lambda^\mathsf{QA} (n) = \mathsf{A}$, then $\lambda^\mathsf{QA} (m) = \mathsf{Q}$;

\item \textsc{(E3)} If $m \vdash_G n$ and $m \neq \star$, then $\lambda^\mathsf{OP} (m) \neq \lambda^\mathsf{OP} (n)$.

\end{itemize}

\end{itemize}
We call moves $m \in M_G$ \emph{\bfseries initial} if $\star \vdash_G m$, and set $M_G^{\mathrm{Init}} \colonequals \{ \, m \in M_G \mid \star \vdash_G m \, \}$.

An arena $G$ is \emph{\bfseries well-founded (w.f.)} if $\vdash_G$ is well-founded, i.e., there is no sequence $(m_i)_{i \in \mathbb{N}}$ of moves $m_i \in M_G$ such that $\star \vdash_G m_0$ and $m_i \vdash_G m_{i+1}$ for all $i \in \mathbb{N}$.
\end{definition}

\begin{remark}
In the original article \cite{mccusker1998games}, an arena is a triple $G = (M_G, \lambda_G, \vdash_G)$, where labels are \emph{assigned} to moves by the \emph{labelling function} $\lambda_G : M_G \rightarrow \{ \mathsf{O}, \mathsf{P} \} \times \{ \mathsf{Q}, \mathsf{A} \}$.
Instead, we \emph{embed} labels into moves (Definition~\ref{DefMoves}); this modification is convenient, e.g., when we take \emph{unions} of games, since it makes labels on moves unambiguous \emph{without underlying arenas}.
Besides, the axiom E1 in \cite{mccusker1998games} further requires $n \vdash_G m \Leftrightarrow n = \star$ whenever $\star \vdash_G m$.
We discard this condition too again for unions of games.
\end{remark}

An arena $G$ specifies moves in a game, each of which is O's/P's question/answer, and which move $n$ can be performed for each move $m$ during a play in the game by the relation $m \vdash_G n$ (cf.~Definition~\ref{DefJSequences}), where $\star \vdash_G m$ means that O can initiate a play by $m$ in the game.
The axioms E1, E2 and E3 are then to be read as follows:
\begin{itemize}

\item E1 sets the convention that an initial move must be O's question;

\item E2 states that an answer must be performed for a question;

\item E3 says that an O-move must be performed for a P-move, and vice versa.

\end{itemize}

\if0
\begin{example}
\label{ExArenas}
The \emph{\bfseries terminal arena} $T$ is given by $T \colonequals (\emptyset, \emptyset)$.
The \emph{\bfseries flat arena} $\mathrm{flat}(S)$ on a set $S$ is given by $M_{\mathrm{flat}(S)} \colonequals \{ q^{\mathsf{OQ}} \} \cup \{ m^{\mathsf{PA}} \mid m \in S \ \! \}$, where $q$ is any element such that $q \not \in S$, and $\vdash_{\mathrm{flat}(S)} \ \colonequals \{ (\star, q^{\mathsf{OQ}}) \} \cup \{ (q^{\mathsf{OQ}}, m^{\mathsf{PA}}) \mid m \in S \ \! \}$.
For instance, $N \colonequals \mathrm{flat}(\mathbb{N})$ is the (flat) arena of natural numbers, and $\boldsymbol{2} \colonequals \mathrm{flat}(\mathbb{B})$, where $\mathbb{B} \colonequals \{ \mathit{tt}, \mathit{ff} \}$, is that of booleans.
These arenas are well-founded.
\end{example}
\fi

We next review \emph{legal positions}, a certain class of  finite sequences of moves equipped with \emph{pointers} from later to earlier occurrences in the sequences. 
The idea is that each non-initial occurrence in a legal position must be made for a specific previous occurrence, and pointers specify such pairs of occurrences. 
Technically, pointers enable us to distinguish similar yet different plays \cite[\S 2.4]{abramsky1999game} and define \emph{views} (Definition~\ref{DefViews}). 
Views play crucial roles when we define constraints on strategies (Definition~\ref{DefConstraintsOnStrategies}). 

We call a finite sequence of moves together with a pointer a \emph{justified (j-) sequence}.
A legal position is then a particular kind of a j-sequence.

\begin{definition}[Justified sequences \cite{coquand1995semantics,mccusker1998games}]
\label{DefJSequences}
An \emph{\bfseries occurrence} in a finite sequence $\boldsymbol{s}$ is a pair $(\boldsymbol{s}(i), i)$ such that $i \in \overline{|\boldsymbol{s}|}$. 
A \emph{\bfseries justified (j-) sequence} is a pair $\boldsymbol{s} = (\boldsymbol{s}, \mathcal{J}_{\boldsymbol{s}})$ of a finite sequence $\boldsymbol{s}$ of moves and a map $\mathcal{J}_{\boldsymbol{s}} : \overline{|\boldsymbol{s}|} \rightarrow \{ 0 \} \cup \overline{|\boldsymbol{s}|-1}$ such that $0 \leqslant \mathcal{J}_{\boldsymbol{s}}(i) < i$ for all $i \in \overline{|\boldsymbol{s}|}$, called the \emph{\bfseries pointer} of the j-sequence. 
An occurrence $(\boldsymbol{s}(i), i)$ is \emph{\bfseries initial} in $\boldsymbol{s}$ if $\mathcal{J}_{\boldsymbol{s}}(i) = 0$.
We say that the occurrence $(\boldsymbol{s}({\mathcal{J}_{\boldsymbol{s}}(i)}), \mathcal{J}_{\boldsymbol{s}}(i))$ is the \emph{\bfseries justifier} of a non-initial one $(\boldsymbol{s}(i), i)$ in $\boldsymbol{s}$, and $(\boldsymbol{s}(i), i)$ is \emph{\bfseries justified} by $(\boldsymbol{s}({\mathcal{J}_{\boldsymbol{s}}(i)}), \mathcal{J}_{\boldsymbol{s}}(i))$ in $\boldsymbol{s}$. 

A j-sequence $\boldsymbol{s}$ is \emph{\bfseries in an arena} $G$ if its elements are moves in $G$, and its pointer respects the enabling relation $\vdash_G$ in $G$, i.e., $\forall i \in \overline{|\boldsymbol{s}|} . \, \Big(\mathcal{J}_{\boldsymbol{s}}(i) = 0 \Rightarrow \star \vdash_G \boldsymbol{s}(i)\Big) \wedge \Big(\mathcal{J}_{\boldsymbol{s}}(i) \neq 0 \Rightarrow \boldsymbol{s}({\mathcal{J}_{\boldsymbol{s}}(i)}) \vdash_G \boldsymbol{s}(i)\Big)$.
We write $\mathscr{J}_G$ for the set of all j-sequences in $G$.

A \emph{\bfseries justified (j-) subsequence} of a j-sequence $\boldsymbol{s}$ is a j-sequence $\boldsymbol{t}$, written $\boldsymbol{t} \sqsubseteq \boldsymbol{s}$, such that $\boldsymbol{t}$ is a subsequence of $\boldsymbol{s}$, and $\mathcal{J}_{\boldsymbol{t}}(i) = j$ if and only if $\mathcal{J}_{\boldsymbol{s}}^n(i) = j$ for some $n \in \mathbb{N}^+$ with the occurrences $(\boldsymbol{s}(\mathcal{J}_{\boldsymbol{s}}^k(i)), \mathcal{J}_{\boldsymbol{s}}^k(i))$ for $k = 1, 2, \dots, n-1$ deleted in $\boldsymbol{t}$.
\end{definition}

\begin{remark}
Unlike the original formulation \cite{mccusker1998games}, we define j-sequences in such a way that they make sense \emph{without underlying arenas}. 
This reformulation is convenient as it enables us to manipulate j-sequences without calculating underlying arenas. 
\end{remark}

\if0
\begin{definition}[Equality on j-sequences]
\label{DefEqualityOnJSequences}
J-sequences $\boldsymbol{s}$ and $\boldsymbol{t}$ are \emph{\bfseries equal}, written $\boldsymbol{s} = \boldsymbol{t}$, if their finite sequences and justifiers coincide (i.e., $\forall i \in \overline{|\boldsymbol{s}|} . \ \! \mathcal{J}_{\boldsymbol{s}}(i) = \mathcal{J}_{\boldsymbol{t}}(i)$).
\end{definition}

\begin{remark}
Equality of j-sequences $\boldsymbol{s}$ and $\boldsymbol{t}$ implies that the moves $\boldsymbol{s}(i)$ and $\boldsymbol{t}(i)$ have the same labels for all $i \in \overline{|\boldsymbol{s}|}$ by the embedding of labels into moves (Definition~\ref{DefMoves}).
\end{remark}
\fi

\begin{convention}
Henceforth, we are casual about the distinction between moves and occurrences, and by abuse of notation, we frequently keep the pointer $\mathcal{J}_{\boldsymbol{s}}$ of each j-sequence $\boldsymbol{s} = (\boldsymbol{s}, \mathcal{J}_{\boldsymbol{s}})$ implicit since it is mostly obvious, and abbreviate occurrences $(\boldsymbol{s}(i), i)$ in $\boldsymbol{s}$ as $\boldsymbol{s}(i)$.
Besides, we sometimes write $\mathcal{J}_{\boldsymbol{s}}(\boldsymbol{s}(i)) = \boldsymbol{s}(j)$ if $\mathcal{J}_{\boldsymbol{s}}(i) = j > 0$.
\end{convention}

\begin{definition}[Views \cite{coquand1995semantics,hyland2000full,mccusker1998games}] 
\label{DefViews}
The \emph{\bfseries P-view} $\lceil \boldsymbol{s} \rceil$ and the \emph{\bfseries O-view} $\lfloor \boldsymbol{s} \rfloor$ of a j-sequence $\boldsymbol{s}$ are the j-subsequences of $\boldsymbol{s}$ defined by the following induction: 
\begin{itemize}

\item $\lceil \boldsymbol{\epsilon} \rceil \colonequals \boldsymbol{\epsilon}$; 

\item $\lceil \boldsymbol{s} m \rceil \colonequals \lceil \boldsymbol{s} \rceil . m$ if $m$ is a P-move; 

\item $\lceil \boldsymbol{s} m \rceil \colonequals m$ if $m$ is initial;

\item $\lceil \boldsymbol{s} m \boldsymbol{t} n \rceil \colonequals \lceil \boldsymbol{s} \rceil . m n$ if $n$ is an O-move such that $m$ justifies $n$; 

\item $\lfloor \boldsymbol{\epsilon} \rfloor \colonequals \boldsymbol{\epsilon}$;

\item $\lfloor \boldsymbol{s} m \rfloor \colonequals \lfloor \boldsymbol{s} \rfloor . m$ if $m$ is an O-move; 

\item $\lfloor \boldsymbol{s} m \boldsymbol{t} n \rfloor \colonequals \lfloor \boldsymbol{s} \rfloor . m n$ if $n$ is a P-move such that $m$ justifies $n$. 

\end{itemize}
\end{definition}

The idea on views is as follows.
Given a nonempty j-sequence $\boldsymbol{s} m$ such that $m$ is a P- (resp. O-) move, the P-view $\lceil \boldsymbol{s} \rceil$ (resp. O-view $\lfloor \boldsymbol{s} \rfloor$) is the currently `relevant part' of the previous occurrences in $\boldsymbol{s}$ for P (resp. O). 
I.e., P (resp. O) is concerned only with the last occurrence of an O- (resp. P-) move, its justifier and that justifier's P- (resp. O-) view, which then recursively proceeds.
See \cite{coquand1995semantics} for an explanation of views in terms of their counterparts in logical calculi, and \cite{curien1998abstract} in lambda-calculi.

\begin{definition}[Legal positions \cite{abramsky1999game}]
\label{DefLegalPositions}
A \emph{\bfseries legal position} is a j-sequence $\boldsymbol{s}$ such~that
\begin{itemize}

\item \textsc{(Alternation)} If $\boldsymbol{s} = \boldsymbol{s_1} m n \boldsymbol{s_2}$, then $\lambda^\mathsf{OP} (m) \neq \lambda^\mathsf{OP} (n)$;


\item \textsc{(Visibility)} If $\boldsymbol{s} = \boldsymbol{t} m \boldsymbol{u}$ with $m$ non-initial, then $\mathcal{J}_{\boldsymbol{s}}(m)$ occurs in the P-view $\lceil \boldsymbol{t} \rceil$ if $m$ is a P-move, and in the O-view $\lfloor \boldsymbol{t} \rfloor$ otherwise.

\end{itemize}

A legal position is \emph{\bfseries in an arena} $G$ if it is a j-sequence in $G$ (Definition~\ref{DefJSequences}).
We write $\mathscr{L}_G$ for the set of all legal positions in $G$.
\end{definition}


\if0
\begin{remark}
For the same reason as the case of j-sequences, we have generalised legal positions in such a way that they make sense \emph{even without underlying arenas}. 
\end{remark}
\fi

As already noted, legal positions \emph{in an arena} are to specify the basic rules of a game in the sense that positions in the game must be legal (Definition~\ref{DefGames}) so that
\begin{itemize}

\item During a play in the game, O makes the first move by a question (by E1),\footnote{Since the initial element $\boldsymbol{s}(1)$ of a legal position $\boldsymbol{s}$ \emph{in an arena $G$} is subject to the equation $\mathcal{J}_{\boldsymbol{s}}(1) = 0$, we have $\star \vdash_G \boldsymbol{s}(1)$. Hence, the axiom E1 on $G$ implies $\lambda(\boldsymbol{s}(1)) = \mathsf{OQ}$.\label{FirstFootnoteOnArenas}} and then P and O alternately make moves (by alternation), where each non-initial move is made for a specific previous occurrence, viz., its justifier;\footnote{Again, since we focus on a legal position $\boldsymbol{s}$ \emph{in an arena $G$}, the justifier of each P-move occurring in $\boldsymbol{s}$ is an O-move, and vice versa, by the axiom E3 on $G$. In addition, the justifier of each answer occurring in $\boldsymbol{s}$ is a question by the axiom E2 on $G$.\label{SecondFootnoteOnArenas}}


\item The justifiers of non-initial occurrences are in the `relevant part' (by visibility). 
\end{itemize}


\subsection{Games and strategies}
\label{SubsectionGamesAndStrategies}
We are now ready to recall the central concepts of \emph{games} and \emph{strategies}.
For technical convenience, we slightly modify the original definition of games in \cite{mccusker1998games,abramsky1999game}:

\begin{definition}[Games \cite{mccusker1998games,abramsky1999game}]
\label{DefGames}
A \emph{\bfseries game} is a set $G$ of legal positions such that
\begin{enumerate}

\item $G$ is nonempty and \emph{prefix-closed} (i.e., $\boldsymbol{s}m \in G \Rightarrow \boldsymbol{s} \in G$);

\item $\mathrm{Arn}(G) \colonequals (M_G, \vdash_G)$ is an arena, where $M_G \colonequals \{ \, \boldsymbol{s}(i) \mid \boldsymbol{s} \in G, i \in \overline{|\boldsymbol{s}|} \, \}$ and $\vdash_G \, \colonequals \{ \, (\star, \boldsymbol{s}(j)) \mid \boldsymbol{s} \in G, \mathcal{J}_{\boldsymbol{s}}(j) = 0 \, \}\cup \{ \, (\boldsymbol{s}(i), \boldsymbol{s}(j)) \mid \boldsymbol{s} \in S, \mathcal{J}_{\boldsymbol{s}}(j)= i > 0 \, \}$.

\end{enumerate}

It is \emph{\bfseries well-founded (w.f.)} if so is $\mathrm{Arn}(G)$, and \emph{\bfseries well-opened (w.o.)} if each of its elements has at most one initial occurrence (i.e., the conjunction of $\boldsymbol{s}m \in G$ and $m \in M_G^{\mathrm{Init}}$ implies $\boldsymbol{s} = \boldsymbol{\epsilon}$).
We call elements of $G$ \emph{\bfseries (valid) positions} in $G$.

A \emph{\bfseries subgame} of $G$ is a game $H \subseteq G$, and $\mathrm{sub}(G) \colonequals \{ \, H \mid \text{$H$ is a subgame of $G$} \, \}$.
\end{definition}

\begin{remark}
The original article \cite[p.~27]{mccusker1998games} also imposes \emph{thread-closure} on each game $G$: The \emph{thread} $\boldsymbol{s} \upharpoonright \mathcal{I} \sqsubseteq \boldsymbol{s}$ of a position $\boldsymbol{s} \in G$ with respect to a given set $\mathcal{I}$ of initial occurrences in $\boldsymbol{s}$, which consists of occurrences hereditarily justified\footnote{An occurrence $n$ in a j-sequence $\boldsymbol{s}$ is \emph{hereditarily justified} by another occurrence $m$ in $\boldsymbol{s}$ if $\mathcal{J}_{\boldsymbol{s}}^i(n) = m$ for some $i \in \mathbb{N}^+$ \cite[p.~22]{mccusker1998games}.\label{FootnoteOnHereditarilyJustifiedOccurrences}} by elements in $\mathcal{I}$, \emph{must be in $G$}.
This axiom is to ensure that positions in $G$ are in the \emph{exponential} $\oc G$ (Definition~\ref{DefConstructionsOnGames}), i.e., $G \subseteq \oc G$, which matches the intuition on exponential $\oc$ \cite{girard1987linear}.

However, $\oc G$ is well-defined even if $G$ is not thread-closed.
Also, we later focus on \emph{w.o.} games (for the identities in the categories of games to be well-defined), which are trivially thread-closed.
For these reasons, we omit thread-closure in Definition~\ref{DefGames}.
\end{remark}

Each game $G$ is nonempty and prefix-closed because conceptually each nonempty position or `moment' in $G$ must have the previous `moment.'
Note that positions in $G$ are automatically \emph{legal in the arena $\mathrm{Arn}(G)$}; i.e., as noted before, the legality is the basic or minimal requirement on positions in games.
We later focus on \emph{w.o., w.f.} games because the identities in the categories of such games behave well (\S\ref{ConstructionsOnGamesAndStrategies}).

The tuple $\mathcal{M}(G) \colonequals (M_G, \lambda_G, \vdash_G, G)$, where $\lambda_G : m \mapsto \lambda(m)$, forms a game in the sense of \cite{mccusker1998games}\footnote{Except that the axiom E1 is slightly weakened, and the thread-closure is omitted; see the remarks after Definitions~\ref{DefArenas} and \ref{DefGames}, respectively.} whose labels are embedded into moves, which we call an \emph{\bfseries MC-game}.
The MC-game $\mathcal{M}(G)$ satisfies: Each move $m \in M_G$ occurs in some position in $G$, and each pair $\star \vdash_G m$ or $m_1 \vdash_G m_2$ is used in some position in $G$.
Conversely, given an MC-game $H$ that satisfies these two conditions, for which we call $H$ \emph{\bfseries economical}, the set of all positions in $H$ forms a game.
Besides, these constructions are inverses to each other. 
Hence, $\mathcal{M}$ is a \emph{bijection} between games and economical MC-games.

Since the economical axioms only exclude unused structures, our simplification of MC-games into games is \emph{harmless}. 
We may further dispense with arenas by directly axiomatising the effects of arenas on legal positions (Footnotes~\ref{FirstFootnoteOnArenas}--\ref{SecondFootnoteOnArenas}), but we do not since arenas are convenient for defining constructions on games (Definition~\ref{DefConstructionsOnGames}).

\begin{definition}[Strategies \cite{mccusker1998games,abramsky1999game}]
\label{DefStrategies}
A \emph{\bfseries strategy} on a game $G$ is a subset $\sigma \subseteq G^{\mathrm{Even}}$, written $\sigma : G$, that is nonempty, \emph{even-prefix-closed} (i.e., $\boldsymbol{s}mn \in \sigma \Rightarrow \boldsymbol{s} \in \sigma$) and \emph{deterministic} (i.e., $\boldsymbol{s}mn, \boldsymbol{s}mn' \in \sigma \Rightarrow \boldsymbol{s}mn = \boldsymbol{s}mn'$).
Let $\mathrm{st}(G) \colonequals \{ \, \sigma \mid \sigma : G \, \}$.

We define the \emph{\bfseries closure} of a strategy $\sigma : G$ with respect to another game $H$ to be the subgame $\overline{\sigma}_H \colonequals \{ \boldsymbol{\epsilon} \} \cup \{ \, \boldsymbol{s}m \in H^{\mathrm{Odd}} \mid \boldsymbol{s} \in \overline{\sigma}_H \, \} \cup \{ \, \boldsymbol{t}lr \in \sigma \mid \boldsymbol{t}l \in \overline{\sigma}_H \, \} \subseteq \sigma \cup H$.
\end{definition}

The idea is that a strategy $\sigma : G$ describes for P \emph{how to play} on the game $G$ by the computation $\boldsymbol{s}m \in G^{\mathrm{Odd}} \mapsto \boldsymbol{s}mn \in \sigma$, which is \emph{deterministic} by the determinacy of $\sigma$, and in general \emph{partial} since there can be no $\boldsymbol{s}mn \in \sigma$ for some $\boldsymbol{s}m \in G^{\mathrm{Odd}}$.

We use the closure operation in \S\ref{PredicateGames}.
We leave it to the reader to verify by induction that the equation $\overline{\sigma}_G = \sigma \cup \{ \, \boldsymbol{s}m \in G \mid \boldsymbol{s} \in \sigma \, \}$ holds for all strategies $\sigma : G$.


\begin{example}
\label{ExamplesOfGames}
The \emph{\bfseries terminal game} $T \colonequals \{ \boldsymbol{\epsilon} \}$ only has the strategy $\top \colonequals \{ \boldsymbol{\epsilon} \}$.


The \emph{\bfseries flat game} on a set $S$ is the game $\mathrm{flat}(S) \colonequals \mathrm{Pref}(\{ \, q^{\mathsf{OQ}} . m^{\mathsf{PA}} \mid m \in S \, \})$, where $q$ is an arbitrarily fixed element such that $q \not\in S$, and $q^{\mathsf{OQ}}$ justifies $m^{\mathsf{PA}}$. 
It has strategies $\bot \colonequals \{ \boldsymbol{\epsilon} \}$ and $\underline{m} \colonequals \{ \boldsymbol{\epsilon}, q m \}$ for each $m \in S$. 
Consider, for instance, the \emph{\bfseries empty game} $\boldsymbol{0} \colonequals \mathrm{flat}(\emptyset)$ and the \emph{\bfseries natural number game} $N \colonequals \mathrm{flat}(\mathbb{N})$.
\if0
The \emph{\bfseries (lazy) natural number game} $N \colonequals \mathrm{Pref}(\{ \, (q^{\mathsf{OQ}} \mathrm{yes}^{\mathsf{PA}})^n q^{\mathsf{OQ}} \mathrm{no}^{\mathsf{PA}} \mid n \in \mathbb{N} \, \})$, where $q$ justifies $\mathrm{yes}$ and $\mathrm{no}$, has a strategy $\underline{n} \colonequals \mathrm{Pref}(\{ (q^{\mathsf{OQ}} \mathrm{yes}^{\mathsf{PA}})^n q^{\mathsf{OQ}} \mathrm{no}^{\mathsf{PA}} \})$, which represents $n \in \mathbb{N}$ by answering O's question $q$ (`Will you count one more?') by P's answer $\mathrm{yes}$ (`Yes, I will!') $n$-times and then by P's answer $\mathrm{no}$ (`No, I won't!').
In other words, the strategy $\underline{n} : N$ reformulates $n$ as the $n$-times counting process. 
\fi
\end{example}

Next, recall that not every strategy corresponds to a \emph{proof}.
For instance, the empty game $\boldsymbol{0}$ models \emph{falsity}, and thus the strategy $\bot : \boldsymbol{0}$ should not be an interpretation of a proof.
We therefore carve out strategies for proofs as \emph{winning} ones:
\begin{definition}[Constraints on strategies \cite{coquand1995semantics,laird1997full,mccusker1998games,abramsky1999game}]
\label{DefConstraintsOnStrategies}
A strategy $\sigma : G$ is
\begin{itemize}

\item \emph{\bfseries Total} if it always responds: $\forall \boldsymbol{s} \in \sigma, \boldsymbol{s} m \in G . \, \exists \boldsymbol{s} m n \in \sigma$;

\item \emph{\bfseries Innocent} if it only depends on P-views: $\forall \boldsymbol{s} m n \in \sigma, \boldsymbol{t} l \in G . \, \lceil \boldsymbol{s} m \rceil = \lceil \boldsymbol{t} l \rceil \Rightarrow \exists \boldsymbol{t}lr \in \sigma . \, \lceil \boldsymbol{s} m n \rceil = \lceil \boldsymbol{t}lr \rceil$;

\item \emph{\bfseries Noetherian} if there is no strictly increasing (with respect to the prefix relation $\preceq$) infinite sequence of elements in the set $\lceil \sigma \rceil \colonequals \{ \, \lceil \boldsymbol{s} \rceil \mid \boldsymbol{s} \in \sigma \, \}$;

\item \emph{\bfseries Winning} if it is total, innocent and noetherian;

\item \emph{\bfseries Well-bracketed (w.b.)} if its `question-answering' in P-views is in the `last-question-first-answered' fashion: If $\boldsymbol{s} q \boldsymbol{t} a \in \sigma$, where $\lambda^{\mathsf{QA}}(q) = \mathsf{Q}$, $\lambda^{\mathsf{QA}}(a) = \mathsf{A}$ and $\mathcal{J}_{\boldsymbol{s}q\boldsymbol{t}a}(a) = q$, then each question occurring in $\boldsymbol{t'}$, where the P-view $\lceil \boldsymbol{s} q \boldsymbol{t} \rceil$ is of the form $\lceil \boldsymbol{s} q \boldsymbol{t} \rceil = \lceil \boldsymbol{s} q \rceil . \boldsymbol{t'}$ by visibility, justifies an answer occurring in $\boldsymbol{t'}$.

\end{itemize}
\end{definition}

\begin{example}
The strategies $\top : T$ and $\underline{n} : N$ for all $n \in \mathbb{N}$ are winning and w.b., while the strategies $\bot : \boldsymbol{0}$ and $\bot : N$ are not even total, let alone winning. 
\end{example}

We think of winning strategies as \emph{proofs in classical logic} as follows.
First, proofs should not get `stuck,' and so strategies for proofs must be \emph{total}.
Next, imposing \emph{innocence} on strategies corresponds to excluding \emph{stateful} terms \cite[\S 2.9]{abramsky1999game}.
Since logic is concerned with \emph{truths}, which are independent of  `passage of time,' proofs should not depended on `states of arguments.' 
Hence, we impose innocence on strategies for proofs.
In addition, we need \emph{noetherianity} to handle infinite plays: If a play by an innocent, noetherian strategy keeps growing infinitely, then it cannot be P's `intention,' and so the play must be \emph{win} for P.
Technically, noetherianity is crucial for the closure of winning strategies under \emph{composition} (Definition~\ref{DefConstructionsOnStrategies}) \cite{coquand1995semantics}.

Further, \emph{well-bracketing} bans classical reasoning or \emph{control operators} \cite[\S 2.10]{abramsky1999game}.
Hence, we regard winning, w.b. strategies as \emph{proofs in intuitionistic logic}.

\subsection{Constructions on games and strategies}
\label{ConstructionsOnGamesAndStrategies}
In this section, we briefly recall constructions on games and strategies.
Since they are standard in the literature, we leave expositions and examples to \cite[\S 3.2]{abramsky1999game}.

\begin{convention}
We omit `tags' for disjoint union $\uplus$. 
For instance, we write $x \in A \uplus B$ if $x \in A$ or $x \in B$; given relations $R_A \subseteq A \times A$ and $R_B \subseteq B \times B$, we write $R_A \uplus R_B$ for the relation on $A \uplus B$ such that $(x, y) \in R_A \uplus R_B \stackrel{\mathrm{df. }}{\Leftrightarrow} (x, y) \in R_A \vee (x, y) \in R_B$. 
\end{convention}

\begin{definition}[Constructions on arenas \cite{mccusker1998games}]
Given arenas $A$ and $B$, we define
\begin{itemize}

\item $A \uplus B \colonequals (M_A \uplus M_B, \vdash_A \uplus \vdash_B)$; 

\item $A \multimap B \colonequals (\{ \, a^{(x^\bot)y} \mid a^{xy} \in M_A \, \} \uplus M_B, \vdash_{A \multimap B})$, $\mathsf{O}^\bot \colonequals \mathsf{P}$, $\mathsf{P}^\bot \colonequals \mathsf{O}$, $\star \vdash_{A \multimap B} m :\Leftrightarrow \star \vdash_B m$ and $m \vdash_{A \multimap B} n :\Leftrightarrow m \vdash_A n \vee m \vdash_B n \vee (\star \vdash_B m \wedge \star \vdash_A n)$.


\end{itemize}
\end{definition}

\begin{definition}[Constructions on games \cite{mccusker1998games}]
\label{DefConstructionsOnGames}
Given games $G$ and $H$, we define
\begin{itemize}

\item $G \otimes H \colonequals \{ \, \boldsymbol{s} \in \mathscr{L}_{\mathrm{Arn}(G) \uplus \mathrm{Arn}(H)} \mid \forall X \in \{ G, H \} . \, \boldsymbol{s} \upharpoonright X \in X \, \}$, called the \emph{\bfseries tensor} of $G$ and $H$, where $\boldsymbol{s} \upharpoonright X \sqsubseteq \boldsymbol{s}$ consists of occurrences of moves in $X$;

\item $\oc G \colonequals \{ \, \boldsymbol{s} \in \mathscr{L}_{\mathrm{Arn}(G)} \mid \forall i \in |\boldsymbol{s}| . \, \mathcal{J}_{\boldsymbol{s}}(i) = 0 \Rightarrow \boldsymbol{s} \upharpoonright \{ (\boldsymbol{s}(i), i) \} \in G \, \}$, called the \emph{\bfseries exponential} of $G$, where $\boldsymbol{s} \upharpoonright \{ (\boldsymbol{s}(i), i) \} \sqsubseteq \boldsymbol{s}$ consists of occurrence in $\boldsymbol{s}$ hereditarily justified (Footnote~\ref{FootnoteOnHereditarilyJustifiedOccurrences}) by the initial occurrence $(\boldsymbol{s}(i), i)$ in $\boldsymbol{s}$;

\item $G \mathbin{\&} H \colonequals \{ \, \boldsymbol{s} \in \mathscr{L}_{\mathrm{Arn}(G) \mathbin{\uplus} \mathrm{Arn}(H)} \mid (\boldsymbol{s} \upharpoonright G \in G \wedge \boldsymbol{s} \upharpoonright H = \boldsymbol{\epsilon}) \vee (\boldsymbol{s} \upharpoonright G = \boldsymbol{\epsilon} \wedge \boldsymbol{s} \upharpoonright H \in H) \, \}$, called the \emph{\bfseries product} of $G$ and $H$;

\item $G \multimap H \colonequals \{ \, \boldsymbol{s} \in \mathscr{L}_{\mathrm{Arn}(G) \multimap \mathrm{Arn}(H)} \mid \boldsymbol{s} \upharpoonright G^\bot \in G, \boldsymbol{s} \upharpoonright H \in H \, \}$, also written $H^G$, where $\boldsymbol{s} \upharpoonright G^\bot$ is obtained from $\boldsymbol{s} \upharpoonright G$ by modifying all the moves $m^{(x^\bot)y}$ occurring in $\boldsymbol{s} \upharpoonright G$ into $m^{xy}$, called the \emph{\bfseries linear implication} from $G$ to $H$;

\item $G \Rightarrow H \colonequals \oc G \multimap H$, called the \emph{\bfseries implication} from $G$ to $H$.

\end{itemize}

Notationally, exponential $\oc$ precedes other constructions on games, while tensor $\otimes$ and product $\mathbin{\&}$ do linear implication $\multimap$ and implication $\Rightarrow$. 
\end{definition}

\begin{lemma}[Well-defined constructions on games]
\label{LemWellDefinedConstructionsOnGames}
Games and w.f. games are closed under $\otimes$, $\mathbin{\&}$, $\oc$ and $\multimap$, and w.o. ones under $\&$, $\multimap$ and $\Rightarrow$.
\end{lemma}
\begin{proof}
See \cite{mccusker1998games} for the closure of MC-games under these constructions.
The proof is essentially the same for games.
The preservation of w.f. (resp. w.o.) ones is clear.
\end{proof}

We leave it to the reader to verify that these constructions on games correspond to those on economical MC-games \cite{mccusker1998games} under the bijection $\mathcal{M}$ (except that we have to exclude unused moves and enabled pairs from an MC-game $G \multimap H$ if $H = T$).

\begin{definition}[Constructions on strategies \cite{mccusker1998games}]
\label{DefConstructionsOnStrategies}
Given strategies $\phi : A \multimap B$, $\sigma : C \multimap D$, $\tau : A \multimap C$, $\psi : B \multimap C$ and $\theta : \oc A \multimap B$, we define
\begin{itemize}

\item $\mathrm{cp}_A \colonequals \{ \, \boldsymbol{s} \in (A_{[0]} \multimap A_{[1]})^{\mathrm{Even}} \mid \forall \boldsymbol{t} \preceq \boldsymbol{s} . \, \mathrm{Even}(\boldsymbol{t}) \Rightarrow \boldsymbol{t} \upharpoonright A_{[0]}^\bot = \boldsymbol{t} \upharpoonright A_{[1]} \, \}$, called the \emph{\bfseries copy-cat} on $A$; 

\item $\mathrm{der}_A \colonequals \{ \, \boldsymbol{s} \in (\oc A \multimap A)^{\mathrm{Even}} \mid \forall \boldsymbol{t} \preceq \boldsymbol{s} . \, \mathrm{Even}(\boldsymbol{t}) \Rightarrow \boldsymbol{t} \upharpoonright \oc A^\bot = \boldsymbol{t} \upharpoonright A \, \}$, called the \emph{\bfseries dereliction} on $A$;

\item $\phi \otimes \sigma \colonequals \{ \, \boldsymbol{s} \in A \otimes C \multimap B \otimes D \mid \boldsymbol{s} \upharpoonright A, B \in \phi, \boldsymbol{s} \upharpoonright C, D \in \sigma \, \}$, called the \emph{\bfseries tensor} of $\phi$ and $\sigma$, where $\boldsymbol{s} \upharpoonright A, B \sqsubseteq \boldsymbol{s}$ (resp. $\boldsymbol{s} \upharpoonright C, D \sqsubseteq \boldsymbol{s}$) consists of occurrences of moves in $A$ or $B$ (resp. $C$ or $D$);

\item $\langle \phi, \tau \rangle \colonequals \{ \, \boldsymbol{s} \in A \multimap B \mathbin{\&} C \mid (\boldsymbol{s} \upharpoonright A, B \in \phi \wedge \boldsymbol{s} \upharpoonright C = \boldsymbol{\epsilon}) \vee (\boldsymbol{s} \upharpoonright A, C \in \tau \wedge \boldsymbol{s} \upharpoonright B = \boldsymbol{\epsilon}) \, \}$, called the \emph{\bfseries pairing} of $\phi$ and $\tau$;

\item $\phi ; \psi \colonequals \{ \, \boldsymbol{s} \upharpoonright A, C \mid \boldsymbol{s} \in \phi \parallel \psi \, \}$, called the \emph{\bfseries composition} of $\phi$ and $\psi$, where $\phi \parallel \psi \colonequals \{ \, \boldsymbol{s} \in \mathscr{J} \mid \boldsymbol{s} \upharpoonright A, B_{[0]} \in \phi, \boldsymbol{s} \upharpoonright B_{[1]}, C \in \psi, \boldsymbol{s} \upharpoonright B_{[0]}^\bot, B_{[1]}^\bot \in \mathrm{cp}_B \, \}$, $\mathscr{J} \colonequals \mathscr{J}_{\mathrm{Arn}(((A \multimap B_{[0]}) \multimap B_{[1]}) \multimap C)}$, $\boldsymbol{s} \upharpoonright B_{[0]}^\bot, B_{[1]}^\bot$ is obtained from $\boldsymbol{s} \upharpoonright B_{[0]}, B_{[1]}$ by applying the operation $(\_)^\bot$ on all the moves, and $\phi; \psi$ is also written $\psi \circ \phi$;

\item $\theta^\dagger \colonequals \{ \, \boldsymbol{s} \in (\oc A \multimap \oc B)^{\mathrm{Even}} \mid \forall i \in |\boldsymbol{s}| . \, \mathcal{J}_{\boldsymbol{s}}(i) = 0 \Rightarrow \boldsymbol{s} \upharpoonright \{ (\boldsymbol{s}(i), i) \} \in \theta \, \}$, called the \emph{\bfseries promotion} of $\theta$. 
\end{itemize}
\end{definition}

For the dereliction $\mathrm{der}_A$ to be well-defined, we have to focus on \emph{w.o.} games $A$; see \cite[pp.~42--43]{mccusker1998games}.
Although w.o. games are not closed under exponential $\oc$, it does not matter for us, like \cite[p.~43]{mccusker1998games}, since we only need \emph{cartesian closure}, not exponential $\oc$ itself, and w.o. games are closed under implication $\Rightarrow$ and product $\&$ (Lemma~\ref{LemWellDefinedConstructionsOnGames}). 

\begin{lemma}[Well-defined constructions on strategies]
\label{LemWellDefinedConstructionsOnStrategies}
If $\phi : A \multimap B$, $\sigma : C \multimap D$, $\tau : A \multimap C$, $\psi : B \multimap C$ and $\theta : \oc A \multimap B$, then $\mathrm{cp}_A : A \multimap A$, $\phi \otimes \sigma : A \otimes C \multimap B \otimes D$, $\langle \phi, \tau \rangle : A \multimap B \mathbin{\&} C$, $\phi ; \psi : A \multimap C$ and $\theta^\dagger : \oc A \multimap \oc B$; also, $\mathrm{der}_B : B \Rightarrow B$ if $B$ is w.o. 
Moreover, $\mathrm{cp}_A$ (resp. $\mathrm{der}_B$) is winning and w.b. if $A$ is w.f. (resp. if $B$ is w.o. and w.f.), and $\otimes$, $\langle \_, \_ \rangle$, $\circ$ and $(\_)^\dagger$ preserve winning and well-bracketing. 
\end{lemma}
\begin{proof}
The only nontrivial point not shown in \cite{mccusker1998games} is that $\mathrm{cp}_A$ is noetherian if $A$ is w.f. (the case for $\mathrm{der}_B$ is the same).  
Note that $\mathrm{cp}_A$ is total, innocent and w.b. even if $A$ is not w.f.
Given $\boldsymbol{s}mm \in \mathrm{cp}_A$, we see by induction on $|\boldsymbol{s}|$ that  the P-view $\lceil \boldsymbol{s} m \rceil$ is of the form $m_1 m_1 m_2 m_2 \dots m_k m_k m$, and therefore there is a sequence $\star \vdash_{A} m_1 \vdash_{A} m_2 \dots  \vdash_{A} m_{k-1} \vdash_{A} m_k \vdash_{A} m$. 
Hence, $\mathrm{cp}_A$ is noetherian if $A$ is w.f.
\end{proof}

\begin{definition}[Categories of games \cite{mccusker1998games,abramsky1999game}]
\label{DefCategoriesOfGamesAndStrategies}
The category $\mathbb{G}$ consists of 
\begin{itemize}

\item W.o. games as objects;

\item Strategies on the implication $A \Rightarrow B$ as morphisms $A \rightarrow B$;

\item The composition $\psi \bullet \phi \colonequals \psi \circ \phi^\dagger : A \Rightarrow C$ of strategies as the composition of morphisms $\phi : A \rightarrow B$ and $\psi : B \rightarrow C$;

\item The dereliction $\mathrm{der}_A$ as the identity on each object $A$.

\end{itemize}

The subcategory $\mathbb{LG}$ (resp. $\mathbb{WG}$) of $\mathbb{G}$ consists of w.f., w.o. games as objects, and winning (resp. winning, w.b.) strategies as morphisms.
\end{definition}

Games in $\mathbb{G}$ (resp. $\mathbb{LG}$ and $\mathbb{WG}$) are \emph{w.o.} (resp. \emph{w.o.} and \emph{w.f.}) for the identities to be well-defined (Lemma~\ref{LemWellDefinedConstructionsOnStrategies}).
Strategies in $\mathbb{G}$ embody unconstrained, general \emph{computations}.
In contrast, strategies in $\mathbb{LG}$ (resp. $\mathbb{WG}$) are winning (resp. winning and w.b.), embodying \emph{proofs in classical logic} (resp. \emph{proofs in intuitionistic logic}). 

These categories are \emph{cartesian closed}, where a terminal object, product and exponential objects are the terminal game $T$, product $\mathbin{\&}$ and implication $\Rightarrow$, respectively.
Since our morphisms are the same as those in the cartesian closed categories of MC-games \cite{abramsky1999game}, they satisfy the equational axioms on cartesian closure in the same way.

Therefore, by Lemmata~\ref{LemWellDefinedConstructionsOnGames} and \ref{LemWellDefinedConstructionsOnStrategies}, we conclude:
\begin{theorem}[Well-defined cartesian closed categories of games]
\label{ThmWellDefinedGameSemanticCCCs}
The structures $\mathbb{G}$, $\mathbb{LG}$ and $\mathbb{WG}$ form cartesian closed categories.
\end{theorem}

\if0
\begin{remark}
We skip the \emph{linear} counterparts of these categories, in which morphisms $A \rightarrow B$ are strategies on $A \multimap B$ \cite[\S3.3]{mccusker1998games}, since they are not central in this work.
\end{remark}
\fi

\begin{notation}
We employ the following notations:
\begin{itemize}

\item Given a strategy $\sigma : G$, we write $\sigma^{T} : T \multimap G$ and $\sigma^{\oc T} : T \Rightarrow G$ for the evident strategies that coincide with $\sigma$ up to `tags';

\item Given strategies $\phi : T \multimap G$ and $\phi' : T \Rightarrow G$, we write $\phi_{T}, \phi'_{\oc T} : G$ for the evident strategies that coincide with $\phi$ and $\phi'$ up to `tags' respectively;

\item Given strategies $\psi : A \multimap B$ and $\alpha : A$, we define $\psi \circ \alpha  \colonequals (\psi \circ \alpha^{T})_{T} : B$;

\item Given strategies $\alpha : A$ and $\beta : B$, we define $\alpha \otimes \beta \colonequals ((\alpha^{T} \otimes \beta^{T}) \circ \iota)_{T} : A \otimes B$, where $\iota$ is the unique strategy on $T \multimap T \otimes T$, and $\langle \alpha, \beta \rangle \colonequals \langle \alpha^{T}, \beta^{T} \rangle_{T} : A \mathbin{\&} B$;

\item Given a strategy $\alpha : A$, we define $\alpha^\dagger \colonequals ((\alpha^{\oc T})^\dagger)_{\oc T} : \oc A$.


\end{itemize}
\end{notation}

\section{Predicate games}
\label{PredicateGames}
Having reviewed games and strategies in \S\ref{GamesAndStrategies}, let us now initiate our contributions.
Before going into details, we sketch our idea in the following paragraphs.
In short, our main challenge is to interpret Sigma-types without destroying the \emph{non-inductive} nature of game semantics or the \emph{additive} nature of product $\&$ (\S\ref{RelatedWorkAndOurContributions}), and we achieve it by generalising games. 
On the other hand, we keep strategies \emph{unchanged} so that our method retains the advantages of game semantics such as \emph{intensionality} (\S\ref{IntroGameSemantics}).

Naively, we can interpret each dependent type $\mathsf{x : C \vdash D(x) \ type}$ by a family $D = (D(\sigma))_{\sigma : C}$ of games $D(\sigma)$ indexed by strategies $\sigma$ on the game $C$ that models the simple type $\mathsf{C}$. 
In the presence of Sigma-types, dependent types with only a single variable cover those with more than one variable, and so we focus on the former. 

In light of product $\&$ (Definition~\ref{DefConstructionsOnGames}), which models a particular kind of Sigma-types, viz., product types, it seems a natural idea to model the Sigma-type $\mathsf{\Sigma_{x : C} D(x)}$ by a subgame $\Sigma(C, D) \subseteq C \mathbin{\&} \bigcup_{\sigma : C}D(\sigma)$ such that strategies on $\Sigma(C, D)$ are the pairings $\langle \sigma, \tau \rangle$ of $\sigma : C$ and $\tau : D(\sigma)$.
However, this idea does not work since
\begin{enumerate}

\item Each game $G$, by definition, determines the set $\mathrm{st}(G)$ of all strategies on $G$;

\item It is impossible for P, when playing on such a game $\Sigma(C, D)$, if any, to \emph{fix} a strategy $\sigma : C$, let alone a game $D(\sigma)$, at the beginning of a play. 

\end{enumerate}

As an example of the first problem, consider a dependent type $\mathsf{x : N \vdash N_b(x) \ type}$ such that the canonical terms of the simple type $\mathsf{N_b(\underline{k})}$ ($k \in \mathbb{N}$) are numerals $\mathsf{\underline{n}}$ such that $n \leqslant k$, and assume that we model $\mathsf{N_b}$ by the family $N_b = (N_b(\sigma))_{\sigma : N}$ of games $N_b(\sigma)$ defined by $N_b(\underline{k}) \colonequals \mathrm{Pref}(\{ \, qn \mid n \leqslant k \, \})$ ($k \in \mathbb{N}$) and $N_b(\bot) \colonequals N$.
However, there is no subgame $G \subseteq N \mathbin{\&} \bigcup_{\sigma : N}N_b(\sigma) = N \mathbin{\&} N$ such that $\langle \underline{k}, \underline{n} \rangle : G$ if and only if $\underline{n} : N_b(\underline{k})$ for all $k, n \in \mathbb{N}$ since if such a game $G$ existed, then $\langle \underline{0}, \underline{0} \rangle, \langle \underline{1}, \underline{1} \rangle : G$, which implies $\langle \underline{0}, \underline{1} \rangle : G$ by the definition of strategies on a game (Definition~\ref{DefStrategies}), a contradiction. 
Hence, no game can properly model the Sigma-type $\mathsf{\Sigma_{x:N}N_b(x)}$.

Let us next give an example of the second problem. 
Let $\mathsf{x : N \vdash List_N(x) \ type}$ be a dependent type such that the canonical terms of the simple type $\mathsf{List_N(\underline{k})}$ ($k \in \mathbb{N}$) are the $k$-lists of numerals, and assume that we model $\mathsf{List_N}$ by the family $\mathrm{List}_N = (\mathrm{List}_N(\sigma))_{\sigma : N}$ of games $\mathrm{List}_N(\sigma)$ such that $\mathrm{List}_N(\underline{k})$ ($k \in \mathbb{N}$) is the $k$-ary tensor $\otimes$ on $N$, where $\mathrm{List}_N(\underline{0}) \colonequals T$, and $\mathrm{List}_N(\bot) \colonequals \bigcup_{k \in \mathbb{N}} \mathrm{List}_N(\underline{k})$.
If there were a subgame $H \subseteq N \mathbin{\&} \bigcup_{\sigma : N} \mathrm{List}_N(\sigma)$ that models the Sigma-type $\mathsf{\Sigma_{x:N} List_N(x)}$, then the pairings $\langle \underline{k}, \underline{n_1} \otimes \underline{n_2} \otimes \cdots \otimes \underline{n_k} \rangle$ for all $k, n_1, n_2, \dots, n_k \in \mathbb{N}$ would be \emph{total} on $H$ since strategies for proofs must be winning (\S\ref{SubsectionGamesAndStrategies}). 
However, there is no such $H$ since O may select, by his first move, e.g., the $(k+1)$st component of $\mathrm{List}_N(\underline{k+1})$.

We have seen the two fundamental limitations of games in modelling Sigma-types.
We solve this problem by generalising games to pairs $\Gamma = (|\Gamma|, \| \Gamma \|)$ of a game $|\Gamma|$ and a family $\| \Gamma \| = (\Gamma(\gamma))_{\gamma : |\Gamma|}$ of subgames $\Gamma(\gamma) \subseteq | \Gamma |$, called \emph{predicate (p-) games}, and define strategies $\gamma$ \emph{on} $\Gamma$, written $\gamma : \Gamma$, to be those $\gamma : |\Gamma|$ satisfying $\overline{\gamma}_{\Gamma(\gamma)}^{\mathrm{Even}} : \Gamma(\gamma)$.
That is, a p-game $\Gamma$ is a game $|\Gamma|$ equipped with the \emph{specification} $\| \Gamma \|$ for strategies $\gamma : |\Gamma|$ to be on $\Gamma$: The \emph{restriction} of $\gamma$ to $\Gamma(\gamma)$, i.e., $\overline{\gamma}_{\Gamma(\gamma)}^{\mathrm{Even}}$, is a strategy on $\Gamma(\gamma)$.

A play in a p-game $\Gamma$ then proceeds as follows.
First, \emph{Judge (J)} asks P a question $q_\Gamma$ (`What is your strategy?') and P answers it by a strategy $\gamma : \Gamma$ (`It is $\gamma$!').
After this \emph{initial protocol} between J and P, an ordinary play on the game $\Gamma(\gamma)$ between P and O follows, in which P must employ the declared strategy $\gamma$ restricted to $\Gamma(\gamma)$, i.e., $\overline{\gamma}_{\Gamma(\gamma)}^{\mathrm{Even}} : \Gamma(\gamma)$.
Accordingly, $\gamma : \Gamma$ is \emph{winning} (resp. \emph{w.b.}) if so is $\overline{\gamma}_{\Gamma(\gamma)}^{\mathrm{Even}} : \Gamma(\gamma)$.

This generalisation of games to p-games solves the first problem as follows.
Let us define a p-game $\Sigma(N, N_b)$ by $|\Sigma(N, N_b)| \colonequals N \mathbin{\&} N$ and $\Sigma(N, N_b)(\langle \sigma, \tau \rangle) \colonequals \begin{cases} N \mathbin{\&} N_b(\underline{k}) &\text{if $\sigma = \underline{k}$ for some $k \in \mathbb{N}$} \\ N \mathbin{\&} N &\text{otherwise} \end{cases}$ for all $\langle \sigma, \tau \rangle : |\Sigma(N, N_b)|$.
Then, observe that strategies on the p-game $\Sigma(N, N_b)$ are the pairings $\langle \sigma, \tau \rangle : N \mathbin{\&} N$ such that $\tau : N_b(\underline{k})$ if $\sigma = \underline{k}$.
For instance, typical plays by the strategy $\langle \underline{7}, \underline{3} \rangle : \Sigma(N, N_b)$ are
\begin{small}
\begin{mathpar}
\begin{tabular}{ccc}
$\Sigma(N,$ & & $N_b)$ \\ \cline{1-3}
& $q_{\Sigma(N, N_b)}$ & \\
& $\langle \underline{7}, \underline{3} \rangle$ & \\
\tikzmark{csigma1} $q$&& \\
\tikzmark{dsigma1} $7$&&
\end{tabular}
\begin{tikzpicture}[overlay, remember picture, yshift=.25\baselineskip]
\draw [->] ({pic cs:dsigma1}) [bend left] to ({pic cs:csigma1});
\end{tikzpicture}
\and
\begin{tabular}{ccc}
$\Sigma(N,$ & & $N_b)$ \\ \cline{1-3}
&$q_{\Sigma(N, N_b)}$& \\
&$\langle \underline{7}, \underline{3} \rangle$& \\
&& $q$ \tikzmark{csigma2} \\
&& $3$ \tikzmark{dsigma2}
\end{tabular}
\begin{tikzpicture}[overlay, remember picture, yshift=.25\baselineskip]
\draw [->] ({pic cs:dsigma2}) [bend right] to ({pic cs:csigma2});
\end{tikzpicture}
\end{mathpar}
\end{small}where J first asks P the question $q_{\Sigma(N, N_b)}$ (`What is your strategy?'), and P answers it by $\langle \underline{7}, \underline{3} \rangle : \Sigma(N, N_b)$ (`It is $\langle \underline{7}, \underline{3} \rangle$!'); then, an ordinary play between P and O on the game $\Sigma(N, N_b)(\langle \underline{7}, \underline{3} \rangle) = N \mathbin{\&} N_b(\underline{7})$ follows, where P must play by $\overline{\langle \underline{7}, \underline{3} \rangle}_{N \mathbin{\&} N_b(\underline{7})}^{\mathrm{Even}} = \langle \underline{7}, \underline{3} \rangle$.
The arrows in the diagram represent pointers in j-sequences (Definition~\ref{DefJSequences}). 

Although the declaration of a strategy is not necessary in this example, it is clear why P cannot play by, e.g., $\langle \underline{0}, \underline{1} \rangle$ on $\Sigma(N, N_b)$: $\overline{\langle \underline{0}, \underline{1} \rangle}_{N \mathbin{\&} N_b(\underline{0})}^{\mathrm{Even}} \not\subseteq \Sigma(N, N_b)(\langle \underline{0}, \underline{1} \rangle)$.
In this way, the specification $\| \Sigma(N, N_b) \|$ solves the first problem by \emph{filtering} strategies.

Moreover, the declaration of a strategy solves the second problem: The p-game $\Sigma(N, \mathrm{List}_N)$ defined by $\Sigma(N, \mathrm{List}_N)(\underline{k}) \colonequals N \mathbin{\&} (\underbrace{N \otimes N \otimes \dots \otimes N}_k)$ for all $k \in \mathbb{N}$, $\Sigma(N, \mathrm{List}_N)(\bot) \colonequals \bigcup_{k \in \mathbb{N}} \Sigma(N, \mathrm{List}_N)(\underline{k})$ and $|\Sigma(N, \mathrm{List}_N)| \colonequals \Sigma(N, \mathrm{List}_N)(\bot)$ models the Sigma-type $\mathsf{\Sigma_{x:N} List_N(x)}$.
Typical plays in $\Sigma(N, \mathrm{List}_N)$ look like
\begin{small}
\begin{mathpar}
\begin{tabular}{ccc}
$\Sigma(N,$ & & $\mathrm{List}_N)$  \\ \cline{1-3}
& $q_{\Sigma(N, \mathrm{List}_N)}$ & \\
& $\langle \underline{2}, \underline{1} \otimes \underline{3} \rangle$ & \\
\tikzmark{csigma71} $q$&& \\
\tikzmark{dsigma71} $2$&& \\
&& \\
&& 
\end{tabular}
\begin{tikzpicture}[overlay, remember picture, yshift=.25\baselineskip]
\draw [->] ({pic cs:dsigma71}) [bend left] to ({pic cs:csigma71});
\end{tikzpicture}
\and
\begin{tabular}{cccccc}
$\Sigma(N,$ & & & &$\mathrm{List}_N)$  \\ \cline{1-6} 
&$q_{\Sigma(N, \mathrm{List}_N)}$&& \\
&$\langle \underline{2}, \underline{1} \otimes \underline{3} \rangle$&&&& \\
&&&&& $q$ \tikzmark{csigma72} \\
&&&&& $3$ \tikzmark{dsigma72} \\
&&& \tikzmark{csigma73} $q$ \\
&&& \tikzmark{dsigma73} $1$
\end{tabular}
\begin{tikzpicture}[overlay, remember picture, yshift=.25\baselineskip]
\draw [->] ({pic cs:dsigma72}) [bend right] to ({pic cs:csigma72});
\draw [->] ({pic cs:dsigma73}) [bend left] to ({pic cs:csigma73});
\end{tikzpicture}
\end{mathpar}
\end{small}where the declaration of the strategy $\langle \underline{2}, \underline{1} \otimes \underline{3} \rangle : \Sigma(N, \mathrm{List}_N)$ \emph{fixes} the underlying game $N \mathbin{\&} (N \otimes N)$.  
Thus, $\overline{\langle \underline{2}, \underline{1} \otimes \underline{3} \rangle}_{N \mathbin{\&} (N \otimes N)}^{\mathrm{Even}} = \langle \underline{2}, \underline{1} \otimes \underline{3} \rangle$ is \emph{total} on $\Sigma(N, \mathrm{List}_N)$.

In this way, we achieve game semantics of Sigma-types \emph{directly without the list construction} of Abramsky et al. \cite{abramsky2015games,vakar2018game} (\S\ref{RelatedWorkAndOurContributions}).
Let us add another important point that their method cannot properly interpret the Sigma-type $\mathsf{\Sigma(N, List_N)}$ since total strategies on their interpretation of this Sigma-type are the lists $(\underline{k}, \underline{n_1} \otimes \underline{n_2} \otimes \dots)$ of strategies, where the second component is an \emph{infinite} iteration of tensor $\otimes$, due to the \textrm{O-sat} operation \cite[Remark~4.5]{vakar2018game}; we come back to this point in \S\ref{DependentPairSpace}.

Besides, our interpretation of Sigma-types retains the \emph{additive} nature of product $\&$ because a play in the interpretation occurs in either side, not both, which is in contrast with the interpretation of Sigma-types by Blot and Laird \cite{blot2018extensional} (\S\ref{RelatedWorkAndOurContributions}).

Finally, we justify p-games, in particular the use of J, as a generalisation of games as follows. 
First, J is also assumed at least implicitly in conventional games as well since there must be someone other than P or O to check if j-sequences played by P and O are \emph{valid} positions in the underlying game. 
Hence, the use of J is not a big departure from games.
However, this argument is only \emph{conceptual}, and the following \emph{mathematical} arguments matter much more.
Second, even in the literature of game semantics, P always plays by a \emph{fixed} strategy as well. 
Besides, this predetermination of a strategy does not lose generality since each position $\boldsymbol{s}$ in a game $G$ is the result of a play by some $\sigma : G$, viz., $\sigma \colonequals \mathrm{Pref}(\{ \boldsymbol{s} \})^{\mathrm{Even}}$.
Third, the initial two elements played in p-games are by J and P, not O and P, so that p-games inherit the \emph{intensionality} of games: Strategies for P are revealed to O \emph{only gradually} along the development of a play. 
I.e., the declaration of a strategy by P is \emph{hidden} from O, so that O can see the strategy \emph{only gradually via his play against the strategy} (as in the case of games). 
Technically, we implement this idea by \emph{excluding the initial two elements from ordinary plays (especially from O-views)}. 
Dually, O's strategy on the domain $\Gamma$ of a linear implication $\Gamma \multimap \Delta$ between p-games is revealed \emph{only gradually} to P via plays.
Consequently, the extensions of strategies on $\Gamma \multimap \Delta$ are \emph{continuous} maps as in the case of games.
More generally, since our strategies are just the ordinary ones, our method inherits the \emph{intensionality} of standard game semantics; see \S\ref{Intensionality}--\ref{Independence}.

In summary, since the declarations of strategies by P and O are \emph{invisible} to each other, plays in p-games proceed just like those in games, and our approach inherits the intensionality of game semantics. 
We emphasise this intensionality since due to the declarations the reader may mistake our model to be close to extensional ones.

The rest of this section proceeds as follows.
We first define p-games in \S\ref{SubsectionPredicateGames}, and then generalise constructions on games (\S\ref{ConstructionsOnGamesAndStrategies}) to p-games in \S\ref{CCCsOfPredicateGamesAndStrategies}.
In these sections, we slightly modify and formalise the aforementioned interpretation of dependent types (Definition~\ref{DefPredicateGames}) as well as the examples $\mathrm{List}_N$ and $N_b$ (Example~\ref{ExpDependentPredicateGames}).

\if0
\subsection{Consistency and completeness on sets of strategies}
\label{ConsistencyAndCompleteness}
For convenience, we define a nonempty set $\mathcal{S}$ of strategies to be \emph{consistent} if there is a game $G$ such that every element of $\mathcal{S}$ is a strategy on $G$, or equivalently:
\begin{definition}[Consistency]
\label{DefConsistency}
A nonempty set $\mathcal{S}$ of strategies is \emph{\bfseries consistent} if $\forall \sigma, \tau \in \mathcal{S}, \boldsymbol{s}m \in (\sigma \cup \tau)^{\mathrm{Odd}} . \, \boldsymbol{s} \in (\sigma \cap \tau) \Rightarrow \boldsymbol{s}m \in (\sigma \cap \tau)$.

Strategies $\sigma$ and $\tau$ are \emph{\bfseries consistent}, written $\sigma \asymp \tau$, if the set $\{ \sigma, \tau \}$ is consistent. 
\end{definition}

Definition~\ref{DefConsistency} formulates the intended meaning of consistency: The union $\bigcup \mathcal{S}$ of a consistent set $\mathcal{S}$ of strategies forms a game such that each element of $\mathcal{S}$ is a strategy on $\bigcup \mathcal{S}$, and conversely the set of all strategies on a game is consistent.\footnote{For the union $\bigcup \mathcal{S}$ to be a game, the weakening of the axiom E1 (Definition~\ref{DefGames}) and the embedding of labels into moves (Definition~\ref{DefMoves}) are crucial.\label{FootnoteOnOurGames}}
Besides:

\begin{definition}[Completeness]
\label{DefCompleteness}
A consistent set $\mathcal{S}$ is \emph{\bfseries complete} if every subset $\mathcal{A} \subseteq P_{\mathcal{S}} \colonequals \bigcup \mathcal{S}$ is an element of $\mathcal{S}$ whenever it is a strategy on the game $P_{\mathcal{S}}$.
\end{definition}

It is not hard to show that the map $G \stackrel{\sim}{\mapsto} \{ \, \sigma \mid \sigma : G \, \}$ is a bijection between games $G$ and complete sets $\{ \, \sigma \mid \sigma : G \, \}$ of strategies with the inverse $\bigcup$ \cite[Theorem~84]{yamada2016game}. 
Hence, we can identify games with complete sets of strategies by this bijection.  

Now, observe that the problem in interpreting the Sigma-type $\mathsf{\Sigma_{x:N} List_N(x)}$ (resp. $\mathsf{\Sigma_{x:N}N_b(x)}$) by games sketched at the beginning of \S\ref{PredicateGames} is due to the \emph{consistency} (resp. \emph{completeness}) of games. 
In this way, we have identified the fundamental limitations of games in interpreting Sigma-type: consistency and completeness.  
\fi

\subsection{Predicate games}
\label{SubsectionPredicateGames}
We reformulate the idea of p-games $\Gamma = (|\Gamma|, \| \Gamma \|)$ sketched above in a handier way as follows.
First, recall that the closure $\overline{\gamma}_{|\Gamma|}$ of a strategy $\gamma : |\Gamma|$ with respect to the ambient game $|\Gamma|$ is characterised by a simple form $\overline{\gamma}_{|\Gamma|} = \gamma \cup \{ \, \boldsymbol{s}m \in |\Gamma| \mid \boldsymbol{s} \in \gamma \, \}$.

Second, the central condition $\overline{\gamma}_{\Gamma(\gamma)}^{\mathrm{Even}} : \Gamma(\gamma)$ is equivalent to another relation $\overline{\gamma}_{|\Gamma|} \preccurlyeq \Gamma(\gamma)$, where the symbol $\preccurlyeq$ denotes Chroboczek's beautiful \emph{liveness ordering}:

\begin{definition}[Liveness ordering \cite{chroboczek2000game}]
\label{DefLivenessOrdering}
The \emph{\bfseries liveness ordering} is a partial order $\preccurlyeq$ between games \cite[Definition~8 and Theorem~9]{chroboczek2000game}, which defines $G \preccurlyeq H$ to mean that O (resp. P) is less (resp. more) restricted in $G$ than in $H$, i.e., they satisfy
\begin{enumerate}

\item If $\boldsymbol{s} \in (G \cap H)^{\mathrm{Even}}$ and $\boldsymbol{s}m \in H^{\mathrm{Odd}}$, then $\boldsymbol{s}m \in G^{\mathrm{Odd}}$;

\item If $\boldsymbol{t}l \in (G \cap H)^{\mathrm{Odd}}$ and $\boldsymbol{t}lr \in G^{\mathrm{Even}}$, then $\boldsymbol{t}lr \in H^{\mathrm{Even}}$.

\end{enumerate}
\end{definition}

\begin{proposition}[Liveness characterisation]
\label{PropLivenessCharacterisation}
Assume $\sigma : G$ and $H \in \mathrm{sub}(G)$.
\begin{enumerate}

\item $\overline{\sigma}_H^{\mathrm{Even}} : H$ if and only if $\overline{\sigma}_{G} \preccurlyeq H$;

\item If $\overline{\sigma}_{G} \preccurlyeq H$, then $\overline{\sigma}_{H}^{\mathrm{Even}} = \sigma \cap H$.

\end{enumerate}
\end{proposition}
\begin{proof}
We focus on the first clause since it is just a routine  to verify the second one.

First, it is straightforward to see that the relation $\overline{\sigma}_H^{\mathrm{Even}} : H$ is equivalent to the subset relation $\overline{\sigma}_H^{\mathrm{Even}} \subseteq H^{\mathrm{Even}}$. 
Hence, it suffices to show $\overline{\sigma}_H^{\mathrm{Even}} \subseteq H^{\mathrm{Even}}$ if and only if $\overline{\sigma}_{G} \preccurlyeq H$.
The sufficiency ($\Leftarrow$) is shown straightforwardly by the induction on the lengths of positions, which we leave to the reader; we focus on the necessity ($\Rightarrow$).

Assume $\overline{\sigma}_{G} \not\preccurlyeq H$; it remains to show $\overline{\sigma}_H^{\mathrm{Even}} \not\subseteq H^{\mathrm{Even}}$.
By $\overline{\sigma}_{G} \not\preccurlyeq H$, there is some $\boldsymbol{s}mn \in \overline{\sigma}_G^{\mathrm{Even}}$ such that $\boldsymbol{s}m \in \overline{\sigma}_G \cap H$ and $\boldsymbol{s}mn \not\in H$.
Note that $\boldsymbol{s}mn \in \overline{\sigma}_G^{\mathrm{Even}}$ implies $\boldsymbol{s}mn \in \sigma$.
We also see by induction on the lengths of positions that $\overline{\sigma}_G \cap H \subseteq \overline{\sigma}_H$ holds, whence $\boldsymbol{s}m \in \overline{\sigma}_H$.
Then, $\boldsymbol{s}mn \in \overline{\sigma}_H$ follows from $\boldsymbol{s}m \in \overline{\sigma}_H$ and $\boldsymbol{s}mn \in \sigma$.
We have shown $\overline{\sigma}_H^{\mathrm{Even}} \not\subseteq H^{\mathrm{Even}}$ since $\boldsymbol{s}mn \in \overline{\sigma}_H^{\mathrm{Even}}$ and $\boldsymbol{s}mn \not\in H^{\mathrm{Even}}$.
\end{proof}

Proposition~\ref{PropLivenessCharacterisation} reduces the relation $\overline{\gamma}_{\Gamma(\gamma)}^{\mathrm{Even}} : \Gamma(\gamma)$ to another $\overline{\gamma}_{|\Gamma|} \preccurlyeq \Gamma(\gamma)$, where $\overline{\gamma}_{|\Gamma|}$ is often easier to handle than $\overline{\gamma}_{\Gamma(\gamma)}^{\mathrm{Even}}$ by its simple form $ \overline{\gamma}_{|\Gamma|} = \gamma \cup \{ \, \boldsymbol{s}m \in |\Gamma| \mid \boldsymbol{s} \in \gamma \, \}$. 

Finally, if $\overline{\gamma}_{|\Gamma|} \preccurlyeq \Gamma(\gamma)$, then Proposition~\ref{PropLivenessCharacterisation} simplifies $\overline{\gamma}_{\Gamma(\gamma)}^{\mathrm{Even}}$ by $\overline{\gamma}_{\Gamma(\gamma)}^{\mathrm{Even}} = \gamma \cap \Gamma(\gamma)$.

We are now ready to introduce the central concept of the present work:
\begin{definition}[Predicate games]
\label{DefPredicateGames}
A \emph{\bfseries predicate (p-) game} is a pair $\Gamma = (|\Gamma|, \| \Gamma \|)$ of a game $|\Gamma|$ and a family $\| \Gamma \| = (\Gamma(\gamma))_{\gamma : |\Gamma|}$ of subgames $\Gamma(\gamma) \subseteq |\Gamma|$.
It is \emph{\bfseries well-founded (w.f.)} (resp. \emph{\bfseries well-opened (w.o.)}) if so is the game $|\Gamma|$.

A \emph{\bfseries strategy} on $\Gamma$, written $\gamma : \Gamma$, is a strategy $\gamma : |\Gamma|$ that satisfies $\overline{\gamma}_{|\Gamma|} \preccurlyeq \Gamma(\gamma)$.
It is \emph{\bfseries total} (resp. \emph{\bfseries innocent}, \emph{\bfseries noetherian}, \emph{\bfseries w.b.}) if so is the strategy $\gamma \cap \Gamma(\gamma) : \Gamma(\gamma)$.

Let $\mathrm{st}(\Gamma) \colonequals \{ \, \gamma \mid \gamma : \Gamma \, \}$ and $\overline{\gamma}_\Gamma \colonequals \overline{\gamma}_{\Gamma(\gamma)}$ ($\gamma : \Gamma$).
A \emph{\bfseries position} in $\Gamma$ is a prefix of a sequence $q_\Gamma \gamma \boldsymbol{s}$ such that $\gamma : \Gamma$ and $\boldsymbol{s} \in \overline{\gamma}_\Gamma$, where $q_\Gamma$ is an arbitrarily fixed element such that $q_\Gamma \not\in M_{|\Gamma|}$, $q_\Gamma \gamma$ is an \emph{\bfseries initial protocol}, and $\boldsymbol{s}$ is an \emph{\bfseries actual position}.
\end{definition}

A play in a p-game $\Gamma$ proceeds as follows.
First, \emph{\bfseries Judge (J)} asks P a question $q_\Gamma$ (`What is your strategy?'), and P answers it by a strategy $\gamma : \Gamma$ (`It is $\gamma$!').
After this initial protocol $q_\Gamma \gamma$, an ordinary play on the game $\Gamma(\gamma)$ between O and P follows, in which P must play by $\gamma$ restricted to $\Gamma(\gamma)$, i.e., $\gamma \cap \Gamma(\gamma) : \Gamma(\gamma)$.
The crucial points of p-games are that P can \emph{only} select a strategy $\gamma : |\Gamma|$ that satisfies $\overline{\gamma}_{|\Gamma|} \preccurlyeq \Gamma(\gamma)$ in the initial protocol, and the declaration of $\gamma : \Gamma$ \emph{fixes} the underlying game $\Gamma(\gamma)$.

In essence, the game-semantic counterpart of the path from STLC to MLTT (\S\ref{MLTT}) is the family $\| \Gamma \|$ added to the game $|\Gamma|$, which brings these \emph{strategy filtering} and \emph{game fixing} abilities to $|\Gamma|$.
As we have seen, the first and the second abilities address the first and the second problems listed at the beginning of \S\ref{PredicateGames}, respectively. 

If initial protocols were part of ordinary plays or visible to O, then by duality those on the domain of each linear implication would be visible to P, so that the extensions of strategies on linear implication may not be continuous. 
This \emph{extensionality} would be undesirable for the \emph{intensional} nature of game semantics.
We address this point by excluding initial protocols from actual positions. 
In particular, initial protocols are out of the scope of P- and O-views since pointers are only on \emph{actual} positions.

\begin{example}
\label{ExPGames}
Given a game $G$, we define the p-game $\mathscr{P}(G) \colonequals (G, \kappa_G)$, where $\kappa_G$ is the constant family at $G$.
Clearly, $\mathrm{st}(\mathscr{P}(G)) = \mathrm{st}(G)$.
We abbreviate $\mathscr{P}(T)$, $\mathscr{P}(\boldsymbol{0})$ and $\mathscr{P}(N)$ as $T$, $\boldsymbol{0}$ and $N$, and call them the \emph{\bfseries terminal p-game}, the \emph{\bfseries empty p-game} and the \emph{\bfseries natural number p-game}, respectively (cf. Example~\ref{ExamplesOfGames}).
\end{example}

\if0
\begin{example}
\label{ExOmega}
Let us define a game $\Omega \colonequals \mathrm{flat}(\{ 0, 1 \})$, and then a p-game $\mathrm{eq}(\Omega)$ by $|\mathrm{eq}(\Omega)| \colonequals \Omega \mathbin{\&} \Omega$ and $\mathrm{eq}(\Omega)(\langle \sigma, \tau \rangle) \colonequals \begin{cases} \Omega \mathbin{\&} \Omega &\text{if $\sigma = \tau$} \\ \boldsymbol{0} \mathbin{\&} \boldsymbol{0} &\text{otherwise} \end{cases}$ for all $\langle \sigma, \tau \rangle : |\mathrm{eq}(\Omega)|$.

Then, total strategies on this p-game $\mathrm{eq}(\Omega)$ are the pairings $\langle \sigma, \tau \rangle : \Omega \mathbin{\&} \Omega$ that satisfy $\sigma = \tau$.
In contrast, there is no game $G$ that satisfies $\mathrm{st}(G) = \mathrm{st}(\mathrm{eq}(\Omega))$; if such a game $G$ existed, then $\langle \underline{0}, \underline{0} \rangle, \langle \underline{1}, \underline{1} \rangle : G$, whence $\langle \underline{0}, \underline{1} \rangle : G$, a contradiction. 
\end{example}

The next example shows that there are strictly more total strategies on p-games than those on games (by the \emph{game fixing}):
\begin{example}
The strategy $\bot$ is clearly not total on any game $G$, but it is total on the p-game $(\boldsymbol{0}, \kappa_T)$, where $\kappa_T$ is the constant map valued at $T$.
\end{example}
\fi

\subsection{Cartesian closed categories of predicate games}
\label{CCCsOfPredicateGamesAndStrategies}
Next, we lift constructions on games (\S\ref{ConstructionsOnGamesAndStrategies}) to p-games. 
The cases of product $\mathbin{\&}$, tensor $\otimes$ and exponential $\oc$ are simple since we can construct them \emph{pointwisely}: 

\begin{notation}
Let $G$ be a game, $\boldsymbol{s} \in \oc G$ and $i \in \mathbb{N}$. 
We write $\boldsymbol{s} \upharpoonright i$ for the j-subsequence of $\boldsymbol{s}$ that consists of occurrences hereditarily justified by the $(i+1)$st initial occurrence in $\boldsymbol{s}$.
For instance, if $\boldsymbol{s} = q2q1q0 \in \oc N$, then $\boldsymbol{s} \upharpoonright 0 = q2$, $\boldsymbol{s} \upharpoonright 1 = q1$ and $\boldsymbol{s} \upharpoonright 2 = q0$.
\end{notation}

\begin{convention}
Given a strategy $\sigma$ on the tensor $G_0 \otimes G_1$ of games $G_i$ ($i = 0, 1$), let
\begin{equation}
\sigma \upharpoonright G_i \colonequals \begin{cases} \sigma_i &\text{if $\sigma = \sigma_0 \otimes \sigma_1$ for (necessarily unique) $\sigma_0 : G_0$ and $\sigma_1 : G_1$;} \\ \uparrow &\text{otherwise, where $\uparrow$ means being \emph{undefined}.} \end{cases}
\end{equation}

Similarly, given a strategy $\tau$ on the exponential $\oc G$ of a game $G$ and $j \in \mathbb{N}$, let
\begin{equation}
\tau \upharpoonright j \colonequals \begin{cases} \{ \, \boldsymbol{s} \upharpoonright j \mid \boldsymbol{s} \in \tau \, \} &\text{if $\{ \, \boldsymbol{s} \upharpoonright k \mid \boldsymbol{s} \in \tau \, \} : G$ for all $k \in \mathbb{N}$;} \\ \uparrow &\text{otherwise.} \end{cases}
\end{equation}

Given a p-game $\Gamma$, we define the value $\Gamma(\uparrow)$ to be \emph{undefined}, and the constructions $\otimes$, $\multimap$, $\&$ and $\oc$ on undefined games to be \emph{undefined}.
Finally, we extend the relation $\overline{\gamma}_{|\Gamma|} \preccurlyeq \Gamma(\gamma)$ by defining that \emph{it does not hold if the game $\Gamma(\gamma)$ is undefined}.
\end{convention}

\begin{definition}[Product and tensor on predicate games]
The \emph{\bfseries product} of p-games $\Gamma$ and $\Delta$ is the p-game $\Gamma \mathbin{\&} \Delta$ defined by $|\Gamma \mathbin{\&} \Delta| \colonequals |\Gamma| \mathbin{\&} |\Delta|$ and $(\Gamma \mathbin{\&} \Delta)(\langle \gamma, \delta \rangle) \colonequals \Gamma(\gamma) \mathbin{\&} \Delta(\delta)$ for all $\langle \gamma, \delta \rangle : |\Gamma \mathbin{\&} \Delta|$, and their \emph{\bfseries tensor} is the p-game $\Gamma \otimes \Delta$ defined by $|\Gamma \otimes \Delta| \colonequals |\Gamma| \otimes |\Delta|$ and $(\Gamma \otimes \Delta)(\sigma) \colonequals \Gamma(\sigma \upharpoonright |\Gamma|) \otimes \Delta(\sigma \upharpoonright |\Delta|)$ for all $\sigma : |\Gamma \otimes \Delta|$.
\end{definition}

\begin{definition}[Countable tensor]
\label{DefCountableTensor}
The \emph{\bfseries countable tensor} of a family $(G_i)_{i \in \mathbb{N}}$ of subgames $G_i \subseteq H$ is the subgame $\otimes_{i \in \mathbb{N}}G_i \colonequals \{ \, \boldsymbol{s} \in \oc H \mid \forall j \in \mathbb{N} . \, \boldsymbol{s} \upharpoonright j \in G_j \, \} \subseteq \oc H$.
\end{definition} 

\begin{definition}[Exponential of predicate games]
The \emph{\bfseries exponential} of a p-game $\Gamma$ is the p-game $\oc \Gamma$ defined by $|\oc \Gamma| \colonequals \oc |\Gamma|$ and $(\oc \Gamma)(\sigma) \colonequals \otimes_{i \in \mathbb{N}} \Gamma(\sigma \upharpoonright i)$ for all $\sigma : | \oc \Gamma |$.
\end{definition}

Hence, strategies on $\Gamma \mathbin{\&} \Delta$ are the pairings $\langle \gamma, \delta \rangle$ of $\gamma : \Gamma$ and $\delta : \Delta$.
Besides, by the above convention, strategies on $\Gamma \otimes \Delta$ are the tensors $\gamma \otimes \delta$ of $\gamma : \Gamma$ and $\delta : \Delta$, and strategies on $\oc \Gamma$ are those $\sigma : \oc |\Gamma|$ such that $\{ \, \boldsymbol{s} \upharpoonright i \mid \boldsymbol{s} \in \sigma \, \} : \Gamma$ for all $i \in \mathbb{N}$.

In contrast, we cannot apply the pointwise method to linear implication $\multimap$: If we define $|\Delta^\Gamma| \colonequals |\Delta|^{|\Gamma|}$ for the linear implication $\Delta^\Gamma$, then it is unclear how to decompose strategies $\phi : |\Delta|^\Gamma$ into those on $|\Gamma|$ and $|\Delta|$.
We solve this problem based on the elegant game semantics of universal quantification \cite[p.~17]{abramsky2005game}:
\begin{definition}[Linear implication and implication between predicate games]
\label{DefLinearImplicationBetweenPredicateGames}
The \emph{\bfseries linear implication} between p-games $\Gamma$ and $\Delta$ is the p-game $\Gamma \multimap \Delta$ (also denoted by $\Delta^\Gamma$) defined by $|\Delta^\Gamma| \colonequals |\Delta|^{|\Gamma|}$ and for all $\phi : |\Delta^\Gamma|$
\begin{small}
\begin{align*}
\label{LinearImplicationBetweenPredicateGames}
(\Delta^\Gamma)(\phi) &\colonequals \{ \boldsymbol{\epsilon} \} \cup \{ \, \boldsymbol{s}m \in |\Delta^\Gamma|^{\mathrm{Odd}} \mid \boldsymbol{s} \in (\Delta^\Gamma)(\phi), \exists \gamma : \Gamma . \, \boldsymbol{s}m \in \Delta(\phi \circ \gamma)^{\overline{\gamma}_{\Gamma}} \, \} \\
&\cup \{ \, \boldsymbol{t}lr \in |\Delta^\Gamma|^{\mathrm{Even}} \mid \boldsymbol{t}l \in (\Delta^\Gamma)(\phi), \forall \gamma : \Gamma . \, \boldsymbol{t}l \in \Delta(\phi \circ \gamma)^{\overline{\gamma}_{\Gamma}} \Rightarrow \boldsymbol{t}lr \in \Delta(\phi \circ \gamma)^{\overline{\gamma}_{\Gamma}} \, \},
\end{align*}
\end{small}and the \emph{\bfseries implication} between $\Gamma$ and $\Delta$ is the linear implication $\Gamma \Rightarrow \Delta \colonequals \oc \Gamma \multimap \Delta$.
\end{definition}

The first clause of the inductive definition of the subgame $(\Delta^\Gamma)(\phi) \subseteq |\Delta^\Gamma|$ is the base case.
Then, the second one specifies one of the two inductive steps: At an even-length position $\boldsymbol{s} \in (\Delta^\Gamma)(\phi)^{\mathrm{Even}}$, O \emph{can} make a move $m$ as in $\Delta(\phi \circ \gamma)^{\overline{\gamma}_{\Gamma}} \subseteq |\Delta^\Gamma|$ for \emph{any $\gamma : \Gamma$ not yet excluded}, i.e., $\boldsymbol{s} \in \Delta(\phi \circ \gamma)^{\overline{\gamma}_{\Gamma}}$. 
Finally, the third one stipulates the other inductive step: At an odd-length position $\boldsymbol{t}l \in (\Delta^\Gamma)(\phi)^{\mathrm{Odd}}$, the next move $r$ by $\phi$ \emph{must} be as in $\Delta(\phi \circ \gamma)^{\overline{\gamma}_{\Gamma}} \subseteq |\Delta^\Gamma|$ for \emph{any $\gamma : \Gamma$ not yet excluded}, i.e., $\boldsymbol{t}l \in \Delta(\phi \circ \gamma)^{\overline{\gamma}_{\Gamma}}$.


The basic idea is that in the subgame $\Delta^\Gamma(\phi) \subseteq |\Delta^\Gamma|$ O can play as in \emph{any} subgame $\Delta(\phi \circ \gamma)^{\overline{\gamma}_{\Gamma}} \subseteq |\Delta^\Gamma|$ not yet excluded; then, since P or $\phi$ should see what $\gamma : \Gamma$ is \emph{only via plays}, $\Delta^\Gamma(\phi)$ only allows P to play as in $\Delta(\phi \circ \gamma)^{\overline{\gamma}_{\Gamma}}$ for \emph{all} $\gamma$ not yet excluded.

In particular, O's strategy $\gamma : \Gamma$ on the domain $|\Gamma|$ at each moment determines its underlying game $\overline{\gamma}_\Gamma$, and P's play by $\phi : \Delta^\Gamma$ on the domain must be as in $\overline{\gamma}_\Gamma$ for \emph{all} possible $\gamma$ not yet excluded. 
This is crucial for composition of strategies between p-games to be well-defined (Lemma~\ref{LemWellDefinedConstructionsOnStrategiesBetweenPredicateGames}); see Footnote~\ref{FoofnoteComposition}. 
This additional subtlety is the main difference from the game semantics of universal quantification \cite{abramsky2005game}.

For instance, a strategy $\phi : |\Sigma(N_{[0]}, (\mathrm{List}_N)_{[1]})| \multimap |N_{[2]}|$ containing $q_{[2]} m_{[1]}$ is not on $\Sigma(N_{[0]}, (\mathrm{List}_N)_{[1]}) \multimap N_{[2]}$ whatever $m$ is since at the odd-length position $q_{[2]}$ the strategy $\langle \underline{0}, \top \rangle : \Sigma(N, (\mathrm{List}_N))$ is not excluded, and $\Sigma(N, \mathrm{List}_N)(\langle \underline{0}, \top \rangle) = N \mathbin{\&} T$.

As emphasised before, strategies $\phi : \Delta^\Gamma$ are indeed \emph{ordinary} ones (Definition~\ref{DefStrategies}), which just satisfy some additional axiom.
In particular, $\phi$ can see O's strategy $\gamma : \Gamma$ on the domain $\Gamma$ \emph{only gradually via plays}; i.e., the declaration of $\gamma$ by O is \emph{invisible} to $\phi$.
In this way, our approach retains the \emph{intensionality} of game semantics. 

\if0
\begin{remark}
O can learn about $\phi$ only via plays since it is invisible to him, but this point does not matter for the second clause of $(\Delta^\Gamma)(\phi)$ because game semantics (of intuitionistic logic) treats O as an \emph{oracle} with an unlimited computational power.\footnote{Technically, this means that we do \emph{not} formulate strategies for O or \emph{counter strategies} \cite{abramsky1994games} in game semantics of intuitionistic logic (n.b., the game semantics \cite{abramsky1994games} is on a fragment of \emph{classical} linear logic). Note that O's strategy on the domain of a linear implication is a strategy, not a counter one, as O and P are \emph{switched} in the domain.}
\end{remark}
\fi

\begin{lemma}[Well-defined constructions on predicate games]
\label{LemWellDefinedConstructionsOnPredicateGames}
P-games and w.f. p-games are closed under $\&$, $\otimes$, $\oc$ and $\multimap$, and w.o. ones under $\&$, $\multimap$ and $\Rightarrow$.  
\end{lemma}
\begin{proof}
Straightforward and left to the reader.
\end{proof}

\begin{lemma}[Well-defined copy-cats and derelictions between predicate games]
\label{LemWellDefinedCopyCatsAndDerelictions}
Suppose that $\Gamma$ is a p-game, and $\Delta$ is a w.o. p-game.
\begin{enumerate}

\item The copy-cat $\mathrm{cp}_{|\Gamma|}$ is a w.b. strategy on $\Gamma \multimap \Gamma$, and winning if $\Gamma$ is w.f.;

\item The dereliction $\mathrm{der}_{|\Delta|}$ is a w.b. strategy on $\Delta \Rightarrow \Delta$, and winning if $\Delta$ is w.f.

\end{enumerate}
\end{lemma}
\begin{proof}
We focus on the first clause since the second one is similar, where we require $\Delta$ to be w.o. for the same reason as the case of derelictions between games (\S\ref{ConstructionsOnGamesAndStrategies}).

We only show $(\overline{\mathrm{cp}_{|\Gamma|}})_{|\Gamma|^{|\Gamma|}} \preccurlyeq \Gamma^\Gamma(\mathrm{cp}_{|\Gamma|})$ as it is the only nontrivial point.
We do it by induction on the lengths of positions.
The base case and the inductive step on odd-length positions are trivial.
For the other inductive step, let $\boldsymbol{t}lr \in \mathrm{cp}_{|\Gamma|}$ and $\boldsymbol{t}l \in (\overline{\mathrm{cp}_{|\Gamma|}})_{|\Gamma|^{|\Gamma|}} \cap \Gamma^\Gamma(\mathrm{cp}_{|\Gamma|})$; we have to prove $\boldsymbol{t}lr \in \Gamma^\Gamma(\mathrm{cp}_{|\Gamma|})$.
By $\boldsymbol{t}l \in \Gamma^\Gamma(\mathrm{cp}_{|\Gamma|})$ and $\boldsymbol{t}lr \in \mathrm{cp}_{|\Gamma|}$, it follows that $\boldsymbol{t}lr$ satisfies the inductive condition for $\boldsymbol{t}lr \in \Gamma^\Gamma(\mathrm{cp}_{|\Gamma|})$.
\end{proof}

\begin{lemma}[Well-defined constructions on strategies between predicate games]
\label{LemWellDefinedConstructionsOnStrategiesBetweenPredicateGames}
Given strategies $\phi : \Gamma \multimap \Delta$, $\psi : \Delta \multimap \Theta$, $\sigma : \Theta \multimap \Xi$, $\tau : \Gamma \multimap \Theta$ and $\theta : \oc \Gamma \multimap \Delta$ between p-games, by applying Definition~\ref{DefConstructionsOnStrategies} to them,\footnote{Definition~\ref{DefConstructionsOnStrategies} is valid here since $\phi : |\Delta|^{|\Gamma|}$, $\psi : |\Theta|^{|\Delta|}$, $\sigma : |\Theta|^{|\Xi|}$, $\tau : |\Theta|^{|\Gamma|}$ and $\theta : |\Delta|^{\oc |\Gamma|}$.} we obtain strategies $\psi \circ \phi : \Gamma \multimap \Theta$, $\phi \otimes \sigma : \Gamma \otimes \Theta \multimap \Delta \otimes \Xi$, $\langle \phi, \tau \rangle : \Gamma \multimap \Delta \mathbin{\&} \Theta$ and $\theta^\dagger : \oc \Gamma \multimap \oc \Delta$ between p-games, and these constructions preserve winning and well-bracketing.\footnote{We go into details in the proof since 
this lemma is foundational for the rest of the present work. After proving the lemma, we are more sketchy on similar arguments.}
\end{lemma}
\begin{proof}
We focus on the tensor $\phi \otimes \sigma$ and the composition $\psi \circ \phi$ because the other constructions are simpler to verify.
First, let us show $\phi \otimes \sigma : \Gamma \otimes \Theta \multimap \Delta \otimes \Xi$, for which it suffices to verify $(\overline{\phi \otimes \sigma})_{|\Delta \otimes \Xi|^{|\Gamma \otimes \Theta|}} \preccurlyeq (\Delta \otimes \Xi)^{\Gamma \otimes \Theta}(\phi \otimes \sigma)$.
We prove it by induction on the lengths of positions. 
The base case and the inductive step on odd-length positions are trivial.
For the other inductive step, let $\boldsymbol{s}mn \in \phi \otimes \sigma$ and $\boldsymbol{s}m \in (\overline{\phi \otimes \sigma})_{|\Delta \otimes \Xi|^{|\Gamma \otimes \Theta|}} \cap (\Delta \otimes \Xi)^{\Gamma \otimes \Theta}(\phi \otimes \sigma)$; we have to show $\boldsymbol{s}mn \in (\Delta \otimes \Xi)^{\Gamma \otimes \Theta}(\phi \otimes \sigma)$.

By $\boldsymbol{s}m \in (\Delta \otimes \Xi)^{\Gamma \otimes \Theta}(\phi \otimes \sigma)$, it suffices to show $\boldsymbol{s}mn \in (\Delta \otimes \Xi)((\phi \otimes \sigma) \circ \varphi)^{\overline{\varphi}_{\Gamma \otimes \Theta}}$ for all $\varphi : \Gamma \otimes \Theta$ such that $\boldsymbol{s}m \in (\Delta \otimes \Xi)((\phi \otimes \sigma) \circ \varphi)^{\overline{\varphi}_{\Gamma \otimes \Theta}}$.
Fix such $\varphi$.
Note that $\varphi = \gamma \otimes \vartheta$ for unique $\gamma : \Gamma$ and $\vartheta : \Theta$, and $(\phi \otimes \sigma) \circ \varphi = (\phi \circ \gamma) \otimes (\sigma \circ \vartheta)$.
Hence, it suffices to show $\boldsymbol{s}mn \upharpoonright |\Delta^\Gamma| \in \Delta(\phi \circ \gamma)^{\overline{\gamma}_\Gamma}$ and $\boldsymbol{s}mn \upharpoonright |\Xi^\Theta| \in \Xi(\phi \circ \vartheta)^{\overline{\vartheta}_\Theta}$.
Assume $n$ in $\phi$; the other case is similar.
Then, we have $\boldsymbol{s}m \upharpoonright |\Delta^\Gamma| \in \Delta(\phi \circ \gamma)^{\overline{\gamma}_\Gamma}$ and $\boldsymbol{s}mn \upharpoonright |\Xi^\Theta| = \boldsymbol{s}m \upharpoonright |\Xi^\Theta| \in \Xi(\phi \circ \vartheta)^{\overline{\vartheta}_\Theta}$ by $\boldsymbol{s}m \in (\Delta \otimes \Xi)((\phi \otimes \sigma) \circ \varphi)^{\overline{\varphi}_{\Gamma \otimes \Theta}}$.
It remains to show $\boldsymbol{s}mn \upharpoonright |\Delta^\Gamma| \in \Delta(\phi \circ \gamma)^{\overline{\gamma}_\Gamma}$, but it follows from $\phi : \Delta^\Gamma$, $(\boldsymbol{s}m \upharpoonright |\Delta^\Gamma|).n \in \phi$ and $\boldsymbol{s}m \upharpoonright |\Delta^\Gamma| \in \overline{\phi}_{|\Delta^\Gamma|} \cap \Delta^\Gamma(\phi)$, where $(\boldsymbol{s}m \upharpoonright |\Delta^\Gamma|).n \in \phi$ follows from $\boldsymbol{s}mn \in \phi \otimes \sigma$, and $\boldsymbol{s}m \upharpoonright |\Delta^\Gamma| \in \overline{\phi}_{|\Delta^\Gamma|} \cap \Delta^\Gamma(\phi)$ from $(\boldsymbol{s}m \upharpoonright |\Delta^\Gamma|).n \in \phi$, $\boldsymbol{s} \upharpoonright |\Delta^\Gamma| \in \phi \cap \Delta^\Gamma(\phi) \subseteq \Delta^\Gamma(\phi)$ and $\boldsymbol{s}m \upharpoonright |\Delta^\Gamma| \in \Delta(\phi \circ \gamma)^{\overline{\gamma}_\Gamma}$.
We have shown $\phi \otimes \sigma : \Gamma \otimes \Theta \multimap \Delta \otimes \Xi$.
The same induction shows that $\phi \otimes \sigma$ is total if so are $\phi$ and $\sigma$, and it trivially follows from Lemma~\ref{LemWellDefinedConstructionsOnStrategies} that $\phi \otimes \sigma$ is innocent (resp. noetherian and w.b.) if so are $\phi$ and $\sigma$.

Let us proceed to show $\psi \circ \phi : \Gamma \multimap \Theta$, for which it suffices to show $(\overline{\psi \circ \phi})_{|\Theta|^{|\Gamma|}} \preccurlyeq \Theta^\Gamma(\psi \circ \phi)$.
Again, we prove it by induction on the lengths of positions, and the base case and the inductive step on odd-length positions are trivial.
For the remaining inductive step, assume $\boldsymbol{t}lr \in \psi \circ \phi$ and $\boldsymbol{t}l \in (\overline{\psi \circ \phi})_{|\Theta|^{|\Gamma|}} \cap (\Theta)^{\Gamma }(\psi \circ \phi)$; we have to show $\boldsymbol{t}lr \in (\Theta)^{\Gamma }(\psi \circ \phi)$.
Similarly to the case of tensor $\otimes$, fix $\gamma : \Gamma$ such that $\boldsymbol{t}l \in \Theta(\psi \circ \phi \circ \gamma)^{\overline{\gamma}_\Gamma}$; then it suffices to show $\boldsymbol{t}lr \in \Theta(\psi \circ \phi \circ \gamma)^{\overline{\gamma}_\Gamma}$.
Assume $l$ in $\Gamma$; the other case is analogous. 
Note that $\boldsymbol{t}l = (\boldsymbol{w} \upharpoonright |\Gamma|, |\Theta|) . l$ for some unique $\boldsymbol{w} \in \phi \parallel \psi$ (Definition~\ref{DefConstructionsOnStrategies}) by the \emph{covering lemma} \cite[p.~12]{abramsky1997semantics}. 
The computation of $\phi \parallel \psi$ on $\boldsymbol{w}l$ produces $r$ after playing a finite fragment $\boldsymbol{u}$ of a position in the intermediate game $\mathrm{Pref}(\mathrm{cp}_{|\Delta|})$; see \cite[p.~11]{abramsky1997semantics}.
Crucially, $\phi : \Delta^\Gamma$ and $\psi : \Theta^\Delta$ imply by induction that $\boldsymbol{u}$ is a suffix of a position in $(\overline{\mathrm{cp}_{|\Delta|}})_{\Delta(\phi \circ \gamma)^{\Delta(\phi \circ \gamma)}}$.\footnote{\label{FoofnoteComposition}In other words, the contact point $\subseteq |\Delta|$ between $\phi$ and $\psi$ \emph{matches} at each moment.}
Hence, $\boldsymbol{t}lr \in \Theta(\psi \circ \phi \circ \gamma)^{\overline{\gamma}_\Gamma}$.

Finally, the same inductive argument verifies that $\psi \circ \phi$ is winning if so are $\phi$ and $\psi$, and it trivially follows from Lemma~\ref{LemWellDefinedConstructionsOnStrategies} that $\psi \circ \phi$ is w.b. if so are $\phi$ and $\sigma$.
\end{proof}

We are now ready to summarise the present section by:
\begin{definition}[Categories of predicate games]
\label{DefGameSemanticCategories}
The category $\mathbb{PG}$ consists of

\begin{itemize}

\item W.o. p-games as objects;

\item Strategies on the implication $\Gamma \Rightarrow \Delta$ as morphisms $\Gamma \rightarrow \Delta$;

\item The composition $\psi \bullet \phi \colonequals \psi \circ \phi^\dagger : \Gamma \Rightarrow \Theta$ of strategies as the composition of morphisms $\phi : \Gamma \rightarrow \Delta$ and $\psi : \Delta \rightarrow \Theta$;

\item The dereliction $\mathrm{der}_{|\Gamma|} : \Gamma \Rightarrow \Gamma$ as the identity $\mathrm{id}_\Gamma$ on each object $\Gamma$.

\end{itemize}

The subcategory $\mathbb{LPG}$ (resp. $\mathbb{WPG}$) of $\mathbb{PG}$ consists of w.f., w.o. p-games as objects, and winning (resp. winning, w.b.) strategies as morphisms.
\end{definition}

\begin{remark}
As in the case of games \cite[\S3.3]{mccusker1998games}, there are the \emph{linear} counterparts of the categories $\mathbb{PG}$, $\mathbb{LPG}$ and $\mathbb{WPG}$, in which morphisms $\Gamma \rightarrow \Delta$ are strategies on the \emph{linear implication} $\Gamma \multimap \Delta$.
We skip them since they are not central in this article.

Nevertheless, our game semantics of MLTT (\S\ref{GameSemanticsOfMLTT})  together with these six categories implies that some combinations of dependent types and linearity/effects are \emph{already there} in game semantics.
We leave it as future work to study such combinations.
\end{remark}

Just like the categories of games (Definition~\ref{DefCategoriesOfGamesAndStrategies}), p-games in $\mathbb{PG}$ (resp. $\mathbb{LPG}$ and $\mathbb{WPG}$) are \emph{w.o.} (resp. \emph{w.f.} and \emph{w.o.}) for the identities to be well-defined.


\begin{theorem}[Well-defined cartesian closed categories of predicate games]
\label{ThmCCCsOfPredicateGames}
The structures $\mathbb{PG}$, $\mathbb{LPG}$ and $\mathbb{WPG}$ form cartesian closed categories. 
\end{theorem}
\begin{proof}
By Lemma~\ref{LemWellDefinedConstructionsOnStrategiesBetweenPredicateGames}, it suffices to focus on $\mathbb{PG}$.
The composition is well-defined by Lemma~\ref{LemWellDefinedConstructionsOnStrategiesBetweenPredicateGames}, and so are the identities by Lemma~\ref{LemWellDefinedCopyCatsAndDerelictions}. 
Since morphisms in $\mathbb{PG}$ are a certain class of those in $\mathbb{G}$, and the composition and the identities in $\mathbb{PG}$ are those in $\mathbb{G}$, the associativity and the unit law on $\mathbb{PG}$ follow from those on $\mathbb{G}$ (Theorem~\ref{ThmWellDefinedGameSemanticCCCs}).
The cartesian closure of $\mathbb{PG}$ is by Example~\ref{ExPGames} on $T$ and Lemmata~\ref{LemWellDefinedConstructionsOnPredicateGames} and \ref{LemWellDefinedConstructionsOnStrategiesBetweenPredicateGames}, where the required equations on morphisms again follow from those on $\mathbb{G}$.
\end{proof}

\if0
\begin{corollary}[CPO-enriched cartesian closed categories of predicate games]
\label{CorCPOPredicateGamesAndStrategies}
The cartesian closed categories $\mathbb{PG}$, $\mathbb{LPG}$ and $\mathbb{WPG}$ are CPO-enriched with respect to the subset relation $\subseteq$ on strategies. 
\end{corollary}
\begin{proof}
Hom-sets in these categories are CPOs by Proposition~\ref{PropPredicateGamesAsCPOs} and Lemma~\ref{LemWellDefinedConstructionsOnPredicateGames}.
The other axioms for the CPO-enrichment follow from Theorem~\ref{ThmWellDefinedGameSemanticCCCs} as morphisms and operations on them in the categories are those in the categories of games.
\end{proof}
\fi

\begin{convention}
We write $\mathbb{PG}(\Gamma)$, $\mathbb{LPG}(\Gamma)$ and $\mathbb{WPG}(\Gamma)$ for the hom-sets $\mathbb{PG}(T, \Gamma)$, $\mathbb{LPG}(T, \Gamma)$ and $\mathbb{WPG}(T, \Gamma)$, respectively, for each object $\Gamma$, and do not distinguish $\Gamma$ and $\mathbb{PG}(\Gamma)$; e.g., we say that $\gamma : \Gamma$ is winning and w.b. if and only if $\gamma \in \mathbb{WPG}(\Gamma)$.

\end{convention}

Finally, let us show a categorically pleasing feature of p-games (Corollary~\ref{CorGameSemanticLimits}).
We focus on $\mathbb{WPG}$ for brevity, but the following argument is valid in $\mathbb{LPG}$ and $\mathbb{PG}$ as well.
First, we define an equivalence relation $\simeq$ between morphisms $\phi, \phi' : \Gamma \rightarrow \Delta$ by $\phi \simeq \phi' \ratio \Leftrightarrow \phi \bullet \gamma = \phi' \bullet \gamma : \Delta$ for all $\gamma : \Gamma$; i.e., $\phi \simeq \phi'$ means that $\phi$ and $\phi'$ are \emph{extensionally equal}.
Next, recall that a morphism $\phi : \Gamma \rightarrow \Delta$ is \emph{strict} if $m^{xy}n^{zw} \in \phi$ implies $n^{(z^{\bot})w} \in \Gamma$ \cite{laurent2002polarized}.
We then define a category $\mathbb{WPG}^\sharp_\simeq$ out of $\mathbb{WPG}$, in which objects are those of $\mathbb{WPG}$, and morphisms $\Gamma \rightarrow \Delta$ are the equivalence classes $[\phi]$ of strict morphisms $\phi : \Gamma \rightarrow \Delta$ in $\mathbb{WPG}$ modulo $\simeq$.
The composition of morphisms $[\phi] : \Gamma \rightarrow \Delta$ and $[\psi] : \Delta \rightarrow \Theta$ in $\mathbb{WPG}^\sharp_\simeq$ is given by $[\psi] \bullet [\phi] \colonequals [\psi \bullet \phi]$, and the identities in $\mathbb{WPG}^\sharp_\simeq$ by $\mathrm{id}_\Gamma \colonequals [\mathrm{der}_{|\Gamma|}]$ for all $\Gamma \in \mathbb{WPG}^\sharp_\simeq$.
We leave it to the reader as an easy exercise to prove that $\mathbb{WPG}^\sharp_\simeq$ forms a cartesian closed category.

\begin{corollary}[Game-semantic finite limits]
\label{CorGameSemanticLimits}
$\mathbb{WPG}^\sharp_\simeq$ has all finite limits.
\end{corollary}
\begin{proof}
It suffices to establish the equaliser of given morphisms $[\phi_1], [\phi_2] : \Gamma \rightrightarrows \Delta$.
Let us then show that the required equaliser is given by the pair $(\Theta, [\mathrm{der}_{|\Theta|}])$, where $\Theta$ is the p-game defined by $| \Theta | \colonequals | \Gamma |$ and $\Theta(\theta) \colonequals \begin{cases} \Gamma(\theta) &\text{if $\phi_1 \bullet \theta = \phi_2 \bullet \theta$;} \\ T &\text{otherwise,} \end{cases}$ for all $\theta : |\Theta|$.
The p-game $\Theta$ is well-defined since it does not depend on the choice of the representatives $\phi_i$ of the morphisms $[\phi_i]$ ($i = 1, 2$).
Note that $\gamma : \Theta$ if and only if $\phi_1 \bullet \gamma = \phi_2 \bullet \gamma$ or $\gamma = \{ \boldsymbol{\epsilon} \}$ for all $\gamma : \Gamma$.
Besides, we have $\phi_1 \bullet \{ \boldsymbol{\epsilon} \} = \phi_2 \bullet \{ \boldsymbol{\epsilon} \}$ thanks to the strictness of $\phi_i$ ($i = 1, 2$).
Hence, it follows that $[\phi_1] \bullet [\mathrm{der}_{|\Theta|}] = [\phi_2] \bullet [\mathrm{der}_{|\Theta|}]$.

Next, given an object $\Theta'$ and a morphism $[\varphi] : \Theta' \rightarrow \Gamma$ with $[\phi_1] \bullet [\varphi] = [\phi_2] \bullet [\varphi]$, we have $[\varphi] : \Theta' \rightarrow \Theta$ and $[\mathrm{der}_{|\Theta|}] \bullet [\varphi] = [\varphi]$, where we leave the details to the reader.
Finally, if another morphism $[\varphi'] : \Theta' \rightarrow \Theta$ satisfies $[\mathrm{der}_{|\Theta|}] \bullet [\varphi'] = [\varphi]$, then $[\varphi'] = [\mathrm{der}_{|\Theta|}] \bullet [\varphi'] = [\varphi]$. 
We have shown the universal property of $(\Theta, [\mathrm{der}_{|\Theta|}])$.
\end{proof}

We focus on \emph{strict} strategies in $\mathbb{WPG}^\sharp_\simeq$ for Corollary~\ref{CorGameSemanticLimits} since, e.g., there is no equaliser of the non-strict strategies $\underline{0}, \underline{1} : N \rightrightarrows N$.
We further take the extensional quotient $[\phi]$ of strict strategies $\phi$ in $\mathbb{WPG}^\sharp_\simeq$ since it seems that otherwise the category would not be finitely complete (though we have not shown this negative result).

This categorical structure is quite \emph{novel}.
For instance, if we apply the proof of Corollary~\ref{CorGameSemanticLimits} to $[\kappa_{\underline{0}}], [\mathbin{\geqslant}] : N \mathbin{\&} N \rightrightarrows N$ such that $\kappa_{\underline{0}}$ is any strict strategy such that $\kappa_{\underline{0}} \bullet \sigma = \underline{0}$ for all $\sigma : N \mathbin{\&} N$, and $\geqslant$ is any strict strategy such that $\mathbin{\geqslant} \bullet \mathbin{\langle \underline{k}, \underline{n} \rangle} = \underline{0}$ if and only if $k \geqslant n$ for all $k, n \in \mathbb{N}$, then their equaliser is the p-game that models the Sigma-type $\mathsf{\Sigma_{x:N}N_b(x)}$ (see the beginning of \S\ref{PredicateGames}).
Therefore, this construction is impossible for games (even if we focus on strict strategies and take their quotient).

Corollary~\ref{CorGameSemanticLimits} is also \emph{useful}: It enables us to internalise a certain notion of $\infty$-groupoids in $\mathbb{WPG}^\sharp_\simeq$, which is a key step to extend this work to HoTT \cite{yamada2021game}.
Note that we do \emph{not} focus on strict strategies or take their quotient when we interpret HoTT, and so it is a proper extension of our game semantics of MLTT to HoTT.

\section{Game semantics of Martin-L\"of type theory}
\label{GameSemanticsOfMLTT}
We are now ready to present our game semantics of MLTT. 
Concretely, we show that the CCC $\mathbb{WPG}$ forms abstract semantics of MLTT: a \emph{category with families (CwF)} \cite{dybjer1996internal}.
CwFs are closer to the syntax than other abstract semantics, so that we can directly see the semantic counterparts of MLTT. 
In fact, we even regard a CwF as another presentation of MLTT as Clairambault and Dybjer \cite{clairambault2014biequivalence} do, and so we only show that $\mathbb{WPG}$ forms a CwF, leaving how a CwF models MLTT to \cite{hofmann1997syntax}.

Specifically, we prove that the CCC $\mathbb{WPG}$ gives rise to a CwF equipped with \emph{semantic type formers} \cite{hofmann1997syntax} for One-, Zero-, N-, Pi-, Sigma- and Id-types, establishing game semantics of MLTT equipped with these types.

The rest of this section proceeds as follows.
We model dependent types in \S\ref{DLPGames}, Pi-types in \S\ref{DependentFunctionSpace}, and Sigma-types in \S\ref{DependentPairSpace}. 
We then show that $\mathbb{WPG}$ forms a CwF in \S\ref{GameSemanticCwF} and equip it with all the semantic type formers in \S\ref{GameSemanticTypeFormers}.
Finally, we analyse the intensionality of our game semantics in \S\ref{Intensionality}, prove the independence of Markov's principle from MLTT in \S\ref{Independence}, for which we use $\mathbb{WPG}$, not $\mathbb{PG}$ or $\mathbb{LPG}$, and extend the game semantics of MLTT to subtyping on dependent types in \S\ref{GameSemanticsOfSubtyping}.

\subsection{Dependent predicate games}
\label{DLPGames}
First, we interpret \emph{dependent types} by \emph{w.o., w.f. dependent p-games}:
\begin{definition}[Dependent predicate games]
\label{DefDependentPredicateGames}
A \emph{\bfseries linearly dependent predicate (p-) game} over a p-game $\Gamma$ is a pair $L = (|L|, \| L \|)$ of a game $|L|$ and a family $\| L \| = (L(\gamma_0))_{\gamma_0 \in \mathbb{WPG}(\Gamma)}$ of p-games $L(\gamma_0)$ such that $|L(\gamma_0)| = |L|$.
It is \emph{\bfseries well-opened (w.o.)} (resp. \emph{\bfseries well-founded (w.f.)}) if so is the game $|L|$.

The \emph{\bfseries extension} of the family $\| L \|$ is the family $L^\star = (L^\star(\gamma))_{\gamma : \Gamma}$ of p-games $L^\star(\gamma)$ defined by $L^\star(\gamma) \colonequals \begin{cases} L(\gamma) &\text{if $\gamma \in \mathbb{WPG}$;} \\ \mathscr{P}(|L|) &\text{otherwise (cf.~Example~\ref{ExPGames}).} \end{cases}$

A \emph{\bfseries dependent predicate (p-) game} over $\Gamma$ is a linearly dependent one over $\oc \Gamma$.
\end{definition}

\begin{notation}
We write $\mathscr{D}_\ell(\Gamma)$ (resp. $\mathscr{D}_\ell^{\mathrm{w}}(\Gamma)$) for the set of all linearly dependent p-games (resp. w.o., w.f. ones) over $\Gamma$, and $\{ \Gamma' \}_\Gamma$ or $\{ \Gamma' \}$ for the \emph{constant} one at $\Gamma'$, i.e., $\{ \Gamma' \}_\Gamma \colonequals (\Gamma', \gamma : \Gamma \mapsto \Gamma')$.
We define $\mathscr{D}(\Gamma) \colonequals \mathscr{D}_\ell(\oc \Gamma)$ and $\mathscr{D}^{\mathrm{w}}(\Gamma) \colonequals \mathscr{D}^{\mathrm{w}}_\ell(\oc \Gamma)$.

We often write $\gamma_0^\dagger$ for an arbitrary element of $\mathbb{WPG}(\oc \Gamma)$, where $\gamma_0 \in \mathbb{WPG}(\Gamma)$, since elements of $\mathbb{WPG}(\oc \Gamma)$ are all \emph{innocent} and so promotions of elements of $\mathbb{WPG}(\Gamma)$.
\end{notation}

We explain Definition~\ref{DLPGames} as follows.
First, we define p-games $\mathcal{U}$ to model universes and encode w.o., w.f. dependent p-games $A$ over a p-game $\Gamma$ by morphisms $\phi_A : \Gamma \rightarrow \mathcal{U}$ in $\mathbb{WPG}$ in a forthcoming article.
Thus, naively $\|A\|$ would be the map $\gamma : \oc \Gamma \mapsto \mathrm{El}(\phi_A \circ \gamma)$, where $\mathrm{El}(\mu) \in \mathbb{WPG}$ is the p-game encoded by each $\mu \in \mathbb{WPG}(\mathcal{U})$.
However, $\phi_A \circ \gamma$ can be \emph{invalid as an encoding}, i.e., $\phi_A \circ \gamma \not \in \mathbb{WPG}(\mathcal{U})$, for instance when $\phi_A \circ \gamma$ is partial, where the value $A(\gamma) = \mathrm{El}(\phi_A \circ \gamma)$ is \emph{undefined}.
This suggests us to restrict $\|A\|$ to $\mathbb{WPG}(\oc \Gamma)$, so that $\mathrm{El}(\phi_A \bullet \gamma_0) \in \mathbb{WPG}$ for all $\gamma_0^\dagger \in \mathbb{WPG}(\oc \Gamma)$.
Accordingly, $\| A \|$ is \emph{trivially continuous} since elements of $\mathbb{WPG}(\oc \Gamma)$ are all \emph{total}.\footnote{This point matches the remark by Abramsky et al. \cite[Footnote~5]{vakar2018game} that the continuity of their interpretation of dependent types does not play any roles.}

Then, however, we also need the ambient game $|A|$.
For instance, if $A = \{ N \}_{\oc \boldsymbol{0}}$, then the family $\|A\|$ is \emph{empty}, and so $\| A \|$ cannot retain the natural number p-game $N$.
We address this problem by adding the game $|A|$ and extending $\|A\|$ to $A^\star$. 

\begin{example}
\label{ExpDependentPredicateGames}
We slightly modify and formalise the examples $\mathrm{List}_N, N_b \in \mathscr{D}^{\mathrm{w}}(N)$ by $\mathrm{List}_N(\underline{k}^\dagger) \colonequals \underbrace{N \otimes N \otimes \dots \otimes N}_k$ for all $k \in \mathbb{N}$ and $|\mathrm{List}_N| \colonequals \bigcup_{k \in \mathbb{N}} \mathrm{List}_N(\underline{k}^\dagger)$, as well as $|N_b| \colonequals N$ and $N_b(\underline{k}^\dagger) \colonequals \mathrm{Pref}(\{ \, qn \mid n \leqslant k \, \})$ for all $k \in \mathbb{N}$. 
\end{example}

\subsection{Pi on dependent predicate games}
\label{DependentFunctionSpace}
We next interpret \emph{Pi-types}.
Our idea is best explained by the set-theoretic analogy as follows. 
Given a dependent type $\mathsf{x : C \vdash D(x)}$, the Pi-type $\mathsf{\Pi_{x:C}D(x)}$ is something like the set of all functions $f$ from $C$ to $\bigcup_{x : C} D(x)$ such that $f(x) \in D(x)$ for all $x \in C$, called \emph{dependent maps} from $C$ to $D$, where recall that the set-theoretic semantics interprets simple types $\mathsf{C}$ and terms $\mathsf{x : C}$ as sets $C$ and elements $x \in C$, respectively, and dependent types $\mathsf{D}$ over $\mathsf{C}$ as families $D = (D(x))_{x \in C}$ of sets $D(x)$.

Hence, in light of implication $\Rightarrow$ between p-games (Definition~\ref{DefLinearImplicationBetweenPredicateGames}), it is now clear how to model Pi-types.
We interpret Pi-types by the following \emph{pi} $\Pi$:
\begin{definition}[Linear-pi and pi]
\label{DefLinearPiSpaces}
Let $L$ be a linearly dependent p-game over a p-game $\Gamma$, and $A$ a dependent p-game over $\Gamma$.
The \emph{\bfseries linear-pi} from $\Gamma$ to $L$ is the p-game $\Pi_\ell (\Gamma, L)$ defined by $|\Pi_\ell (\Gamma, L)| \colonequals |L|^{|\Gamma|}$ and for all $\phi : |\Pi_\ell (\Gamma, L)|$
\begin{footnotesize}
\begin{align*}
\Pi_\ell (\Gamma, L)(\phi) &\colonequals \{ \boldsymbol{\epsilon} \} \cup \{ \, \boldsymbol{s}m \in |\Pi_\ell (\Gamma, L)|^{\mathrm{Odd}} \mid \boldsymbol{s} \in \Pi_\ell (\Gamma, L)(\phi), \exists \gamma : \Gamma . \, \boldsymbol{s}m \in L^\star(\gamma)(\phi \circ \gamma)^{\overline{\gamma}_\Gamma} \, \} \\
&\cup \{ \, \boldsymbol{t}lr \in |\Pi_\ell (\Gamma, L)|^{\mathrm{Even}} \mid \boldsymbol{t}l \in \Pi_\ell (\Gamma, L)(\phi), \forall \gamma : \Gamma . \, \boldsymbol{t}l \in L^\star(\gamma)(\phi \circ \gamma)^{\overline{\gamma}_\Gamma} \Rightarrow \boldsymbol{t}lr \in L^\star(\gamma)(\phi \circ \gamma)^{\overline{\gamma}_\Gamma} \, \},
\end{align*}
\end{footnotesize}and the \emph{\bfseries pi} from $\Gamma$ to $A$ is the linear-pi $\Pi (\Gamma, A) \colonequals \Pi_\ell(\oc \Gamma, A)$. 
\end{definition}

The idea of linear-pi is that it is linear implication between p-games except that it also satisfies \emph{type dependency}.
Specifically, the codomain of a linear-pi $\Pi_\ell (\Gamma, L)$ is the p-game $L(\gamma)$ if the strategy $\gamma : \Gamma$ on the domain satisfies $\gamma \in \mathbb{WPG}(\Gamma)$, and the constant one $\mathscr{P}(|L|)$ otherwise.
We then define pi out of linear-pi and exponential in the same way as we define implication out of linear implication and exponential.

Accordingly, linear-pi (resp. pi) generalises linear implication (resp. implication): Given p-games $\Gamma$ and $\Gamma'$, $\Pi_\ell (\Gamma, \{ \Gamma' \}_\Gamma) = \Gamma \multimap \Gamma'$ (resp. $\Pi (\Gamma, \{ \Gamma' \}_{\oc \Gamma}) = \Gamma \Rightarrow \Gamma'$).

Essentially the same proof as the one of Lemma~\ref{LemWellDefinedConstructionsOnPredicateGames} on linear implication shows:
\begin{theorem}[Well-defined linear-pi]
Given a (w.o., w.f.) linearly dependent p-game $L$ over a (w.o., w.f.) p-game $\Gamma$, the linear-pi $\Pi_\ell(\Gamma, L)$ is a (w.o., w.f.) p-game.
\end{theorem}

This theorem also implies that, given a (w.o., w.f.) dependent p-game $A$ over a (w.o., w.f.) p-game $\Gamma$, the pi $\Pi(\Gamma, A)$ is a (w.o., w.f.) p-game.
However, we have to handle the case where $\Gamma$ is a dependent p-game. 
We address this point in \S\ref{Prod}.

\if0
The idea of pi is the same as that of \emph{pi-games} \cite[Definition~4.4]{vakar2018game}, which comes from the game semantics of second-order universal quantification \cite[p.~17]{abramsky2005game}.
To spell it out, the first clause of $\Pi_\ell (\Gamma, L)(\phi)$ is the base case of the inductive definition.
Then, the second clause specifies that, at a given even-length position $\boldsymbol{s} \in |\Pi_\ell (\Gamma, L)|^{\mathrm{Even}}$, O \emph{can} play as in the game $L(\gamma)(\phi \circ \gamma)^{\overline{\gamma}_\Gamma}$ for \emph{any} $\gamma : \Gamma$ that is not yet excluded, i.e., $\boldsymbol{s} \in L(\gamma)(\phi \circ \gamma)^{\overline{\gamma}_\Gamma}$, and the third one stipulates that, at a given odd-length position $\boldsymbol{t}l \in |\Pi_\ell (\Gamma, L)|^{\mathrm{Odd}}$, the next P's move by $\phi$ \emph{must} be compatible with the game $L(\gamma)(\phi \circ \gamma)^{\overline{\gamma}_\Gamma}$ for \emph{any} $\gamma : \Gamma$ that is not yet excluded, i.e., $\boldsymbol{t}l \in L(\gamma)(\phi \circ \gamma)^{\overline{\gamma}_\Gamma}$.
\fi

\begin{example}
\label{ExDependentFunctionSpace1}
A strategy $\zeta : \Pi(N, \mathrm{List}_N)$ plays as the dependent map $n \in \mathbb{N} \mapsto (0, 0, \dots, 0) \in \mathbb{N}^n$ as follows. 
If O makes the first move $q_{[k]}$ ($k \in \mathbb{N}^+$) on the codomain $|\mathrm{List}_N|$, where $(\_)_{[k]}$ is the `tag' for tensor $N_{[1]} \otimes N_{[2]} \otimes \dots$, then $\zeta$ asks a question $q_{[0]}$ on the domain $\oc N_{[0]}$, where $(\_)_{[0]}$ is another `tag' for clarity. 
Finally, if O plays by $q_{[k]}q_{[0]} \mapsto n_{[0]}$ for any $n \in \mathbb{N}^+$, then $\zeta$ plays by $q_{[k]}q_{[0]}n_{[0]} \mapsto 0_{[k]}$.
If $k \leqslant n$, then $\underline{n}^\dagger \in \mathbb{WPG}(\oc N)$ on the domain is not yet excluded; $\zeta$ is compatible with this possibility since its computation so far is within the subgame $N \Rightarrow \mathrm{List}_N(\underline{n}^\dagger) \subseteq |\Pi(N, \mathrm{List}_N)|$.

At this point, O can further play on the pi $\Pi(N, \mathrm{List}_N)$.
For instance, O can make another move $q_{[k']}$ on the codomain such that $k \neq k'$; then, $\zeta$ again makes the move $q_{[0]}$ on the domain.
If O answers it by $n'_{[0]}$ for any $n' \in \mathbb{N}^+$, then $\zeta$ plays $0_{[k']}$ on the codomain.
If $n \neq n'$, $k > n$ or $k' > n$, then $\zeta$ no longer has to play within the subgame $N \Rightarrow \mathrm{List}_N(\underline{n}^\dagger)$.
In contrast, if $n = n'$, $k \leqslant n$ and $k' \leqslant n$, then $\zeta$ still has to play on the subgame $N \Rightarrow \mathrm{List}_N(\underline{n}^\dagger)$, and its computation still satisfies this condition.
In this way, a play by $\zeta$ proceeds.
Hence, we indeed have $\zeta : \Pi(N, \mathrm{List}_N)$.
\end{example}

\if0
In a pi $\Pi (\Gamma, A)$, O specifies his strategy on the domain $\oc \Gamma$ \emph{explicitly} by his play on the domain $\oc \Gamma$, and \emph{implicitly} by his play on the codomain $|A|$.
On the strategy $\zeta : \Pi(N, \mathrm{List}_N)$ in Example~\ref{ExDependentFunctionSpace1}, for instance, after O makes any first move on the codomain $|\mathrm{List}_N|$, he cannot play by $\underline{0}^\dagger$ on the domain $\oc N$ since $\mathrm{List}_N(\underline{0}^\dagger) = T$; i.e., \emph{implicit}.\footnote{This implicit specification does not hold for the method by Abramsky et al. \cite{abramsky2015games,vakar2018game} due to the \textrm{O-sat} operation \cite[Remark~4.5]{vakar2018game} explained in the next section.}
In contrast, after O plays by $q \mapsto n+1$ ($n \in \mathbb{N}$) on the domain $\oc N$, his strategy there cannot be, e.g., $\underline{n+2}^\dagger$; i.e., \emph{explicit}. 
This \emph{gradual} specification of O's strategy on the domain \emph{never completes} in general since positions are \emph{finite}; e.g., any position in $\zeta$ cannot completely specify O's strategy on the domain $\oc N$, where note that O may play on the domain $\oc N$ by a strategy that is not a promotion.
\fi

\subsection{Sigma on dependent predicate games}
\label{DependentPairSpace}
We next interpret \emph{Sigma-types}.
Recall that by the set-theoretic analogy the Sigma-type $\mathsf{\Sigma_{x:C}D(x) \ type}$ represents the set of all pairs $(c, d)$ such that $c \in C$ and $d \in D(c)$, called \emph{dependent pairs} on $C$ and $D$.
Hence, we model Sigma-types by:
\begin{definition}[Sigma]
\label{DefSigma}
The \emph{\bfseries sigma} of a p-game $\Gamma$ and a dependent p-game $A$ over $\Gamma$ is the p-game $\Sigma (\Gamma, A)$ defined by $|\Sigma (\Gamma, A)| \colonequals |\Gamma| \mathbin{\&} |A|$ and $\Sigma (\Gamma, A)(\langle \gamma, \alpha \rangle) \colonequals \Gamma(\gamma) \mathbin{\&} A^\star(\gamma^\dagger)(\alpha)$ for all $\langle \gamma, \alpha \rangle : |\Sigma(\Gamma, A)|$. 
\end{definition}

The basic idea of a sigma $\Sigma (\Gamma, A)$ is that strategies on $\Sigma (\Gamma, A)$ must be the pairings $\langle \gamma, \alpha \rangle : |\Gamma| \mathbin{\&} |A|$ that satisfy $\gamma : \Gamma$ and $\alpha : A(\gamma^\dagger)$ whenever $\gamma \in \mathbb{WPG}(\Gamma)$. 
The second condition matches the constraint on the pi $\Pi(\Gamma, A)$ as we shall see shortly.

When $A$ is a constant dependent p-game $\{ \Gamma' \}_{\oc \Gamma}$, the sigma $\Sigma (\Gamma, \{ \Gamma' \}_{\oc \Gamma})$ coincides with the product $\Gamma \mathbin{\&} \Gamma'$. 
Thus, sigma $\Sigma$ generalises product $\mathbin{\&}$ on p-games.

\begin{theorem}[Well-defined sigma]
\label{ThmWellDefinedSigma}
Given a (w.o., w.f.) dependent p-game $A$ over a (w.o., w.f.) p-game $\Gamma$, the sigma $\Sigma(\Gamma, A)$ is a (w.o., w.f.) p-game.
\end{theorem}
\begin{proof}
Straightforward and left to the reader.
\end{proof}

Let $\Gamma \in \mathbb{WPG}$ and $A \in \mathscr{D}^{\mathrm{w}}(\Gamma)$. 
Then, $\Sigma(\Gamma, A) \in \mathbb{WPG}$, and it attains the following nontrivial point.
First, the categorical view on semantics of MLTT (Definition~\ref{DefCwFs}) tells us that there must be a bijection between strategies $\psi \in \mathbb{WPG}(\Delta, \Sigma(\Gamma, A))$ and pairs $(\phi, \Check{\alpha})$ of strategies $\phi \in \mathbb{WPG}(\Delta, \Gamma)$ and $\Check{\alpha} \in \mathbb{WPG}(\Pi(\Delta, A\{\phi\}))$, where the dependent p-game $A\{ \phi \} \in \mathscr{D}^{\mathrm{w}}(\Delta)$ is defined by $|A\{ \phi \}| \colonequals |A|$ and $A\{\phi\}(\delta) \colonequals A(\phi^\dagger \circ \delta)$ ($\delta \in \mathbb{WPG}(\oc \Delta)$).
Next, the constraint on the codomain $A\{\phi\}$ of $\Pi(\Delta, A\{\phi\})$ is \emph{gradually} revealed along the gradual specification of a strategy $\delta$ on the domain $\oc \Delta$.
However, since $\Sigma(\Gamma, A)$ itself cannot refer to $\Delta$, unlike $\Pi(\Delta, A\{\phi\})$, the challenge is to define $\Sigma(\Gamma, A)$ in such a way that strategies $\psi \in \mathbb{WPG}(\Delta, \Sigma(\Gamma, A))$ achieve the bijection.\footnote{This problem is \emph{vacant} in Abramsky et al. \cite{abramsky2015games,vakar2018game} as they \emph{define} morphisms $\Delta \rightarrow \Sigma(\Gamma, A)$ to be the pairs $(\phi, \Check{\alpha})$ at the cost of the non-inductive nature of game semantics.}
Then, the sigma $\Sigma(\Gamma, A)$ in fact \emph{meets} this requirement (Theorem~\ref{ThmWellDefinedWPG}).

\begin{example}
\label{ExpSigma}
The sigmas $\Sigma(N, \mathrm{List}_N)$ and $\Sigma(N, N_b)$ respectively formalise the two examples given at the beginning of \S\ref{PredicateGames}.
\if0
Now, note that for interpreting the introduction rule on Sigma-type the pairing $\langle \phi, \alpha \rangle$ of strategies $\Delta \Rightarrow \Gamma$ and $\alpha : \Pi(\Delta, A\{ \phi \})$, where $A\{\phi\}(\delta^\dagger) \colonequals A(\phi^\dagger \bullet \delta)$ for all $\delta^\dagger \in \mathbb{WPG}(\Delta)$, must be a strategy on $\Delta \Rightarrow \Sigma(\Gamma, A)$; see Definition~\ref{DefCwFWPG}.
If we adopted the simpler axiom mentioned above in place of (\ref{DependencyOnSigmaSecond}), then the pairing $\langle \kappa_{\underline{0}}, \zeta \bullet \kappa_{\underline{0}} \rangle$ would not be a strategy on $N \Rightarrow \Sigma(N, \mathrm{List}_N)$ since $\langle \mathrm{der}_N, \zeta \rangle \bullet \underline{0} = \langle \underline{0}, \zeta \bullet \underline{0} \rangle$ is not on $\Sigma(N, \mathrm{List}_N)$ (n.b., on the right-hand side $\zeta \bullet \underline{0}$, O can make any first move in $|\mathrm{List}_n|$, not necessarily in $\mathrm{List}_N(\underline{0}^\dagger) = T$).
Clearly, the problem is that the simpler axiom does not match the axiom (\ref{DependencyOnPi}).
Accordingly, we employ the axiom (\ref{DependencyOnSigmaSecond}) which \emph{does} fit the axiom (\ref{DependencyOnPi}), so that the pairing is well-defined; see Theorem~\ref{ThmWellDefinedWPG}.
\fi
\end{example}

The pairings $\langle \underline{k}, \underline{n_1} \otimes \underline{n_2} \otimes \dots \otimes \underline{n_k} \rangle$, where $k, n_1, n_2, \dots, n_k \in \mathbb{N}$, are all \emph{winning} on $\Sigma(N, \mathrm{List}_N)$ since the declaration of a strategy in a p-game can control odd-length positions.
Besides, given a morphism $\phi : \Sigma(N, \mathrm{List}_N) \rightarrow \Delta$ in $\mathbb{WPG}$, the composition $T \stackrel{\langle \underline{k}, \underline{n_1} \otimes \underline{n_2} \otimes \dots \otimes \underline{n_k} \rangle}{\rightarrow} \Sigma(N, \mathrm{List}_N) \stackrel{\phi}{\rightarrow} \Delta$ is well-defined and \emph{winning} (Lemma~\ref{LemWellDefinedConstructionsOnStrategiesBetweenPredicateGames}).

In contrast, the list $(\underline{k}, \underline{n_1} \otimes \underline{n_2} \otimes \dots \otimes \underline{n_k})$ is \emph{partial} on the list $(N, \mathrm{List}_N)$ in Abramsky et al. \cite{abramsky2015games,vakar2018game}, where we regard $N$ and the components of $\mathrm{List}_N$ as games in the evident way, since there is no way in their method to prevent O from playing \emph{arbitrarily} on the ambient game $|\mathrm{List}_N|$.
For instance, $(\underline{0}, \top)$ is partial on $(N, \mathrm{List}_N)$. 

Note that the underlying list of (families) games for $(\underline{k}, \underline{n_1} \otimes \underline{n_2} \otimes \dots \otimes \underline{n_k})$ cannot be $(N, \{\mathrm{List}_N(\underline{k}^\dagger)\})$ as the composition of their morphisms $(\underline{k}, \underline{n_1} \otimes \underline{n_2} \otimes \dots \otimes \underline{n_k}) : T \rightarrow (N, \{\mathrm{List}_N(\underline{k}^\dagger)\})$ and $\psi : (N, \mathrm{List}_N) \rightarrow \Theta$ is \emph{ill-defined}. 
Also, we cannot replace the domain of $\psi$ with $(N, \{\mathrm{List}_N(\underline{k}^\dagger)\})$ either since it cannot handle the other values of $k \in \mathbb{N}$. 
Hence, the codomain of $(\underline{k}, \underline{n_1} \otimes \underline{n_2} \otimes \dots \otimes \underline{n_k})$ must be $(N, \mathrm{List}_N)$.

For this problem, they instead take the lists $(\underline{k}, \tau)$ of winning strategies $\underline{k} : N$ and $\tau : |\mathrm{List}_N|$. 
However, $\tau$ is \emph{redundant} as it is total on $|\mathrm{List}_N|$; e.g., $(\underline{0}, \top)$ is replaced with any $(\underline{0}, \underline{n_1} \otimes \underline{n_2} \otimes \cdots)$ such that $n_i \in \mathbb{N}$ for all $i \in \mathbb{N}$ in their approach, where $\underline{n_1} \otimes \underline{n_2} \otimes \cdots$ is an \emph{infinite} iteration of tensor $\otimes$, but neither of them is canonical. 
Consequently, their approach cannot properly model the Sigma-type $\mathsf{\Sigma(N, List_N)}$.

Technically, Abramsky et al. employ the \textrm{O-sat} operation \cite[Remark~4.5 and Theorem~5.6]{vakar2018game}, which allows O to \emph{ignore the type dependency} of Pi- and Sigma-types, and impose winning on strategies against such unrestricted plays by O.
However, this method generates a significant \emph{gap} between their model and MLTT since terms $\mathsf{\vdash \langle a, b \rangle : \Sigma(A, B)}$ satisfy $\mathsf{\vdash a : A}$ and $\mathsf{\vdash b : B(a)}$, not $\mathsf{x : A \vdash b : B(x)}$.
How does their full completeness hold then?
The answer is that they focus on a very specific class of finite inductive types \cite[Figure~7]{vakar2018game} for which the problem just disappears.
However, as we have just seen, their method is unsuited for more standard types such as $\mathsf{List_N}$, which is generated by the elimination rule of N-type in the presence of universes.


In summary, the novel mathematical structure of p-games enables us to not only dispense with the list construction but also \emph{accurately} model Pi- and Sigma-types. 

Finally, recall that the interpretation of Sigma-types by Blot and Laird \cite{blot2018extensional} does not preserve the additive nature of product $\&$ on games (\S\ref{RelatedWorkAndOurContributions}).
In contrast, sigma $\Sigma$ preserves the additive nature.
The challenge we have overcome is that the additive nature requires that a play in $\Sigma(\Gamma, A)$ is \emph{either} a play in $\Gamma$ or $A$, but then we have to specify the constraint on plays in $A$ for $\Sigma(\Gamma, A)$ \emph{even without playing on $\Gamma$}.


Again, however, this construction of sigma $\Sigma$ is not general enough for the same reason as the case of pi $\Pi$ (\S\ref{DependentFunctionSpace}); we generalise it in \S\ref{Sum}. 

\if0
\begin{remark}
The noetherianity of the directed join $\gamma_0$ plays a crucial role in the proof of Theorem~\ref{ThmWellDefinedSigma}.
This point implies that even if the family $\|A\|$ were indexed by strategies on $\Gamma$, not w.b. or winning ones, the type dependency $\alpha : A(\gamma)$ on $\langle \gamma, \alpha \rangle : \Sigma(\Gamma, A)$ must at least focus on the case where $\gamma : \Gamma$ is innocent and noetherian.
\end{remark}
\fi

\subsection{A game-semantic category with families}
\label{GameSemanticCwF}
We are now ready to present our game-semantic CwF. 
Let us first recall the general definition of CwFs introduced by Dybjer \cite{dybjer1996internal}:

\begin{definition}[CwFs \cite{dybjer1996internal,hofmann1997syntax}]
\label{DefCwFs}
A \emph{\bfseries category with families (CwF)} is a tuple $\mathcal{C} = (\mathcal{C}, \mathrm{Ty}, \mathrm{Tm}, \_\{\_\}, T, \_.\_, \mathrm{p}, \mathrm{v}, \langle\_,\_\rangle_\_)$,
where
\begin{itemize}

\item $\mathcal{C}$ is a category with a terminal object $T \in \mathcal{C}$;

\item $\mathrm{Ty}$ assigns, to each object $\Gamma \in \mathcal{C}$, a set $\mathrm{Ty}(\Gamma)$ of \emph{\bfseries types} in the \emph{\bfseries context} $\Gamma$;

\item $\mathrm{Tm}$ assigns, to each pair $(\Gamma, A)$ of an object $\Gamma \in \mathcal{C}$ and a type $A \in \mathrm{Ty}(\Gamma)$, a set $\mathrm{Tm}(\Gamma, A)$ of \emph{\bfseries terms} of type $A$ in the context $\Gamma$;

\item To each $\phi : \Delta \to \Gamma$ in $\mathcal{C}$, $\_\{\_\}$ assigns a map $\_\{\phi\} : \mathrm{Ty}(\Gamma) \to \mathrm{Ty}(\Delta)$, called the \emph{\bfseries substitution on types}, and a family $(\_\{\phi\}_A)_{A \in \mathrm{Ty}(\Gamma)}$ of maps $\_\{\phi\}_A : \mathrm{Tm}(\Gamma, A) \to \mathrm{Tm}(\Delta, A\{\phi\})$, called the \emph{\bfseries substitutions on terms};


\item $\_ . \_$ assigns, to each pair $(\Gamma, A)$ of a context $\Gamma \in \mathcal{C}$ and a type $A \in \mathrm{Ty}(\Gamma)$, a context $\Gamma . A \in \mathcal{C}$, called the \emph{\bfseries comprehension} of $A$;

\item $\mathrm{p}$ (resp. $\mathrm{v}$) associates each pair $(\Gamma, A)$ of a context $\Gamma \in \mathcal{C}$ and a type $A \in \mathrm{Ty}(\Gamma)$ with a morphism $\mathrm{p}_A : \Gamma . A \to \Gamma$ in $\mathcal{C}$ (resp. a term $\mathrm{v}_A \in \mathrm{Tm}(\Gamma . A, A\{\mathrm{p}_A\})$), called the \emph{\bfseries first projection} on $A$ (resp. the \emph{\bfseries second projection} on $A$);

\item $\langle \_, \_ \rangle_\_$ assigns, to each triple $(\phi, A, \Check{\alpha})$ of a morphism $\phi : \Delta \to \Gamma$ in $\mathcal{C}$, a type $A \in \mathrm{Ty}(\Gamma)$ and a term $ \Check{\alpha} \in \mathrm{Tm}(\Delta, A\{\phi\})$, a morphism $\langle \phi,  \Check{\alpha} \rangle_A : \Delta \to \Gamma . A$
in $\mathcal{C}$, called the \emph{\bfseries extension} of $\phi$ by $ \Check{\alpha}$,

\end{itemize}
that satisfies, for any $\Theta \in \mathcal{C}$, $\varphi : \Theta \to \Delta$ and $\alpha \in \mathrm{Tm}(\Gamma, A)$, the equations
\begin{itemize}

\item \textsc{(Ty-Id)} $A \{ \mathrm{id}_\Gamma \} = A$;

\item \textsc{(Ty-Comp)} $A \{ \phi \circ \varphi \} = A \{ \phi \} \{ \varphi \}$;

\item \textsc{(Tm-Id)} $\alpha \{ \mathrm{id}_\Gamma \}_A = \alpha$;

\item \textsc{(Tm-Comp)} $\alpha \{ \phi \circ \varphi \}_A = \alpha \{ \phi \}_A \{ \varphi \}_{A\{\phi\}}$;

\item \textsc{(Cons-L)} $\mathrm{p}_A \circ \langle \phi,  \Check{\alpha} \rangle_A = \phi$;

\item \textsc{(Cons-R)} $\mathrm{v}_A \{ \langle \phi,  \Check{\alpha} \rangle_A \} =  \Check{\alpha}$;

\item \textsc{(Cons-Nat)} $\langle \phi, \Check{\alpha} \rangle_A \circ \varphi = \langle \phi \circ \varphi, \Check{\alpha} \{ \varphi \}_{A\{\phi\}} \rangle_A$;

\item \textsc{(Cons-Id)} $\langle \mathrm{p}_A, \mathrm{v}_A \rangle_A = \mathrm{id}_{\Gamma . A}$.

\end{itemize}
\end{definition}

Roughly, judgements of MLTT are interpreted in a CwF $\mathcal{C}$ by
\begin{small}
\begin{mathpar}
\mathsf{\vdash \Gamma \ ctx} \mapsto \llbracket \mathsf{\Gamma} \rrbracket \in \mathcal{C} \and
\mathsf{\Gamma \vdash A \ type} \mapsto \llbracket \mathsf{A} \rrbracket \in \mathrm{Ty}(\llbracket \mathsf{\Gamma} \rrbracket) \and
\mathsf{\Gamma \vdash a : A} \mapsto \llbracket \mathsf{a} \rrbracket \in \mathrm{Tm}(\llbracket \mathsf{\Gamma} \rrbracket, \llbracket \mathsf{A} \rrbracket) \and
\mathsf{\vdash \Gamma = \Delta \ ctx} \Rightarrow \llbracket \mathsf{\Gamma} \rrbracket = \llbracket \mathsf{\Delta} \rrbracket \and
\mathsf{\Gamma \vdash A = B \ type} \Rightarrow \llbracket \mathsf{A} \rrbracket = \llbracket \mathsf{B} \rrbracket \and
\mathsf{\Gamma \vdash a = a' : A} \Rightarrow \llbracket \mathsf{a} \rrbracket = \llbracket \mathsf{a'} \rrbracket,
\end{mathpar}
\end{small}where $\llbracket \_ \rrbracket$ denotes the \emph{semantic map} or \emph{interpretation}.
See \cite{hofmann1997syntax} for the details.

Let us now turn to introducing our game-semantic CwF:
\begin{definition}[A game-semantic CwF]
\label{DefCwFWPG}
We define a CwF $\mathbb{WPG}$ as follows:
\begin{itemize}

\item The category $\mathbb{WPG}$ is given in Definition~\ref{DefGameSemanticCategories}, and $T \in \mathbb{WPG}$ in Example~\ref{ExPGames};


\item $\mathrm{Ty}(\Gamma) \colonequals \mathscr{D}^{\mathrm{w}}(\Gamma)$ ($\Gamma \in \mathbb{WPG}$) and $\mathrm{Tm}(\Gamma, A) \colonequals \mathbb{WPG}(\Pi(\Gamma, A))$ ($A \in \mathscr{D}^{\mathrm{w}}(\Gamma)$); 

\item Given $\phi : \Delta \rightarrow \Gamma$ in $\mathbb{WPG}$, define $\_\{\phi\} : \mathrm{Ty}(\Gamma) \to \mathrm{Ty}(\Delta)$ by $|A\{ \phi \}| \colonequals |A|$ and $A \{ \phi \}(\delta_0^\dagger) \colonequals A(\phi^\dagger \bullet \delta_0)$ for all $A \in \mathrm{Ty}(\Gamma)$ and $\delta_0^\dagger \in \mathbb{WPG}(\oc \Delta)$, and define $\_\{\phi\}_A : \mathrm{Tm}(\Gamma, A) \to \mathrm{Tm}(\Delta, A\{\phi\})$ by $\alpha \{ \phi \}_A \colonequals \alpha \bullet \phi$ for all $\alpha \in \mathrm{Tm}(\Gamma, A)$; 


\item $\Gamma.A \colonequals \Sigma (\Gamma, A)$, $\mathrm{p}_A \colonequals \mathrm{der}_{|\Gamma|} : \Sigma (\Gamma, A) \rightarrow \Gamma$, $\mathrm{v}_A \colonequals \mathrm{der}_{|A|} : \Pi (\Sigma(\Gamma, A), A\{\mathrm{p}_A\})$ and $\langle \phi, \Check{\alpha} \rangle_A \colonequals  \langle \phi, \Check{\alpha} \rangle : \Delta \rightarrow \Sigma (\Gamma, A)$ ($\Check{\alpha} \in \mathrm{Tm}(\Delta, A\{ \phi \})$).


\end{itemize}

Given $\Gamma \in \mathbb{WPG}$ and $A \in \mathscr{D}^{\mathrm{w}}(\Gamma)$, we write $\mathbb{WPG}(\Gamma, A)$ for $\mathrm{Tm}(\Gamma, A)$.
We often omit subscripts on components of $\mathbb{WPG}$ when they are evident. 
\end{definition}

\begin{theorem}[A well-defined game-semantic CwF]
\label{ThmWellDefinedWPG}
The category $\mathbb{WPG}$ together with the structures defined in Definition~\ref{DefCwFWPG} gives rise to a CwF.
\end{theorem}
\begin{proof}
We focus on substitution of terms, second projections and extensions since the other structures of $\mathbb{WPG}$ are straightforward to check (e.g., the equational axioms on morphisms and terms in $\mathbb{WPG}$ simply follow from Theorem~\ref{ThmCCCsOfPredicateGames}). 
Let $\Gamma, \Delta \in \mathbb{WPG}$, $A \in \mathscr{D}^{\mathrm{w}}(\Gamma)$, $\phi \in \mathbb{WPG}(\Delta, \Gamma)$, $\alpha \in \mathbb{WPG}(\Gamma, A)$ and $\Check{\alpha} \in \mathbb{WPG}(\Delta, A\{ \phi \})$.  

\if0
We first show $A\{\phi\} \in \mathrm{Ty}(\Delta)$. 
Its only nontrivial point is the continuity of the map $\| A\{\phi\} \|$, but it follows from the continuity of the map $\| A \|$ and composition of strategies (Theorem~\ref{ThmWellDefinedGameSemanticCCCs}) since continuity is preserved under composition of maps.
\fi

On $\alpha \{ \phi \} = \alpha \bullet \phi \in \mathbb{WPG}(|\Delta|, |A|)$,\footnote{The following argument is based on the proof of Lemma~\ref{LemWellDefinedConstructionsOnStrategiesBetweenPredicateGames} on composition.} we see by induction on the lengths of positions that $(\overline{\alpha \{ \phi \}})_{|\Pi(\Delta, A\{ \phi \})|} \preccurlyeq \Pi(\Delta, A\{ \phi \})(\alpha \{ \phi \})$ follows from $\phi \in \mathbb{WPG}(\Delta, \Gamma)$ and $\alpha \in \mathbb{WPG}(\Gamma, A)$.
The same induction also shows that $\alpha\{\phi\} \cap \Pi(\Delta, A\{ \phi \})(\alpha \{ \phi \}) : \Pi(\Delta, A\{ \phi \})(\alpha \{ \phi \})$ is winning and w.b., proving $\alpha \{ \phi \} \in \mathbb{WPG}(\Delta, A\{ \phi \})$.

The same induction proves $\mathrm{v}_A = \mathrm{der}_{|A|} \in \mathbb{WPG}(\Sigma(\Gamma, A), A\{\mathrm{p}_A\})$, where the point is that the pi $\Pi(\Gamma, A)$ and the sigma $\Sigma(\Gamma, A)$ are defined by the same family $A^\star$.

Finally, on $\langle \phi, \Check{\alpha} \rangle \in \mathbb{WPG}(|\Delta|, |\Gamma| \mathbin{\&} |A|)$, the same induction shows that $\langle \phi, \Check{\alpha} \rangle \in \mathbb{WPG}(\Delta, \Sigma(\Gamma, A))$ follows from $\phi \in \mathbb{WPG}(\Delta, \Gamma)$ and $\Check{\alpha} \in \mathbb{WPG}(\Delta, A\{ \phi \})$, where the \emph{fixed} $\phi$ in $\langle \phi, \Check{\alpha} \rangle$ plays a crucial role even if a play in $\Pi(\Delta, \Sigma(\Gamma, A))$ starts in $A$.
\if0
Finally, let us verify the required equations:
\begin{itemize}

\item \textsc{(Ty-Id)} $A\{\mathrm{id}_\Gamma\} = (A(\mathrm{der}_\Gamma^\dagger \circ \delta))_{\delta \in \mathbb{PG}(\oc \Delta)} = (A(\delta))_{\delta \in \mathbb{PG}(\oc \Delta)} = A$;

\item \textsc{(Ty-Comp)} Given $\Theta \in \mathbb{WPG}$ and $\psi : \Gamma \rightarrow \Theta$ in $\mathbb{WPG}$,
\begin{align*}
A\{ \psi \bullet \phi \} &= (A((\psi \bullet \phi)^\dagger \circ \delta))_{\delta \in \mathbb{PG}(\oc \Delta)} \\
&= (A((\psi^\dagger \circ \phi^\dagger) \circ \delta))_{\delta \in \mathbb{PG}(\oc \Delta)} \\
&= (A(\psi^\dagger \circ (\phi^\dagger \circ \delta)))_{\delta \in \mathbb{PG}(\oc \Delta)} \\
&= (A \{ \psi \} (\phi^\dagger \circ \delta))_{\delta \in \mathbb{PG}(\oc \Delta)} \\
&= (A \{ \psi \} \{ \phi \} (\delta))_{\delta \in \mathbb{PG}(\oc \Delta)} \\
&= A \{ \psi \} \{ \phi \};
\end{align*}

\item \textsc{(Tm-Id)} $\alpha \{ \mathrm{id}_\Gamma \} = a \bullet \mathrm{der}_\Gamma = a \circ \mathrm{der}_\Gamma^\dagger = a \circ \mathrm{cp}_\Gamma = a$;

\item \textsc{(Tm-Comp)} $a \{ g \bullet f \} = a \bullet (g \bullet f) = (a \bullet g) \bullet f = a \{ g \} \bullet f = a \{ g \} \{ f \}$;

\item \textsc{(Cons-L)} $\mathrm{p}(A) \bullet \langle g, \Check{\alpha} \rangle = \mathrm{fst} \circ \langle g, \Check{\alpha} \rangle^\dagger = \mathrm{fst} \circ \langle g^\dagger, \Check{\alpha}^\dagger \rangle = g$;

\item \textsc{(Cons-R)} $\mathrm{v}_A \{ \langle g, \Check{\alpha} \rangle \} = \mathrm{snd} \bullet \langle g, \Check{\alpha} \rangle = \mathrm{snd} \circ \langle g, \Check{\alpha} \rangle^\dagger = \mathrm{snd} \circ \langle g^\dagger, \Check{\alpha}^\dagger \rangle = \Check{\alpha}$;

\item \textsc{(Cons-Nat)} $\langle g, \Check{\alpha} \rangle \bullet f = \langle g \bullet f, \Check{\alpha} \bullet f \rangle  = \langle g \bullet f, \Check{\alpha} \{ f \} \rangle$;

\item \textsc{(Cons-Id)} $\langle \mathrm{p}(A), \mathrm{v}_A \rangle = \langle \mathrm{fst}_{\Sigma(\Gamma, A)}, \mathrm{snd}_{\Sigma(\Gamma, A)} \rangle = \mathrm{der}_{\Sigma(\Gamma, A)} = \mathrm{id}_{\Gamma . A}$,

\end{itemize}
which completes the proof. 
\fi
\end{proof}

\if0
\begin{remark}
The map $\psi \mapsto (\pi_1 \bullet \psi, \pi_2 \{ \psi \})$ implements the bijection between strategies $\psi \in \mathbb{WPG}(\Delta, \Sigma(\Gamma, A))$ and pairs $(\phi, \Check{\alpha})$ of strategies $\phi \in \mathbb{WPG}(\Delta, \Gamma)$ and $\Check{\alpha} \in \mathbb{WPG}(\Delta, A\{ \phi \})$ mentioned in a paragraph between Definition~\ref{DefSigma} and Example~\ref{ExpSigma}.
Its inverse is the extension $(\phi, \Check{\alpha}) \mapsto \langle \phi, \Check{\alpha} \rangle$.
Seeing the bijection closely, it satisfies the crucial axiom \ref{DependencyOnPi} for $\pi_2 \{ \psi \} \in \mathbb{WPG}(\Delta, A\{ \pi_1 \bullet \psi \})$ by the axiom \ref{DependencyOnSigmaSecond} on the sigma $\Sigma(\Gamma, A)$ with the first axiom of FoPLIs on $\psi$, which constitutes a particular instance of the substitution of terms shown to be well-defined in the proof of Theorem~\ref{ThmWellDefinedWPG}.

If, instead of the axiom \ref{DependencyOnSigmaSecond}, we adopted another axiom $\alpha \subseteq \overline{A}(\gamma)$ on strategies $\langle \gamma, \alpha \rangle : \Sigma (\Gamma, A)$, then the extension $\langle \phi, \Check{\alpha} \rangle$ might not be a strategy on $\Delta \Rightarrow \Sigma(\Gamma, A)$.
For instance, define $\mathrm{parity}_N \in \mathscr{D}^{\mathrm{w}}(N)$ by $|\mathrm{parity}_N| \colonequals N_b(\underline{1}^\dagger)$, $\mathrm{parity}_N(\underline{2n}^\dagger) \colonequals N_b(\underline{1}^\dagger) \setminus \{ \underline{1} \}$ and $\mathrm{parity}_N(\underline{2n+1}^\dagger) \colonequals N_b(\underline{1}^\dagger) \setminus \{ \underline{0} \}$ ($n \in \mathbb{N}$).
Then, we have $\underline{0} \in \mathbb{WPG}(N, \mathrm{parity}_N\{ \mathrm{double} \})$, but $\langle \mathrm{double}, \underline{0} \rangle$ is not a strategy on $N \Rightarrow \Sigma(N, \mathrm{parity}_N)$ with respect to the axiom $\alpha \subseteq \overline{A}(\gamma)$ since $\underline{0} \bullet \bot = \underline{0} \not\subseteq \bot =  \overline{\mathrm{parity}_N}(\mathrm{double} \bullet \bot)$.
\end{remark}
\fi

It is notable that terms in the CwF $\mathbb{WPG}$ are simply a certain class of strategies, and constructions on terms are those on strategies. 
This point is quite desirable since it means that the results/techniques on existing game semantics are also available for our game semantics of MLTT.
It also implies that our game semantics inherits the advantages of existing game semantics (\S\ref{IntroGameSemantics}) such as intensionality (\S\ref{Intensionality}).

\subsection{Game-semantic type formers}
\label{GameSemanticTypeFormers}
Nevertheless, CwFs only model the fragment of MLTT common to all types. 
Hence, in this section, we equip the CwF $\mathbb{WPG}$ with \emph{semantic type formers} \cite{hofmann1997syntax} that model One-, Zero-, N-, Pi-, Sigma- and Id-types.  

\subsubsection{Game semantics of Pi-types}
\label{Prod}
We begin with \emph{Pi-types}.
Recall first their semantic type former in an arbitrary CwF:
\begin{definition}[CwFs with Pi-types \cite{hofmann1997syntax}]
A CwF $\mathcal{C}$ \emph{\bfseries supports Pi-types} if
\begin{itemize}

\item \textsc{($\Pi$-Form)} Given $\Gamma \in \mathcal{C}$, $A \in \mathrm{Ty}(\Gamma)$ and $B \in \mathrm{Ty}(\Gamma . A)$, there is a type $\Pi (A, B) \in \mathrm{Ty}(\Gamma)$, where we also write $A \Rightarrow B$ for $\Pi (A, B)$ if $B\{ \langle \mathrm{id}_\Gamma, \alpha \rangle \} = B\{ \langle \mathrm{id}_\Gamma, \alpha' \rangle \} \in \mathrm{Ty}(\Gamma)$ for all $\alpha, \alpha' \in \mathrm{Tm}(\Gamma, A)$;

\item \textsc{($\Pi$-Intro)} Given $\beta \in \mathrm{Tm}(\Gamma . A, B)$, there is a term $\lambda_{A, B} (\beta) \in \mathrm{Tm}(\Gamma, \Pi (A, B))$;

\item \textsc{($\Pi$-Elim)} Given $\kappa \in \mathrm{Tm}(\Gamma, \Pi (A, B))$ and $\alpha \in \mathrm{Tm}(\Gamma, A)$, there is a term $\mathrm{App}_{A, B} (\kappa, \alpha) \in \mathrm{Tm}(\Gamma, B\{ \overline{\alpha} \})$, where $\overline{\alpha} \colonequals \langle \mathrm{id}_\Gamma, \alpha \rangle_A : \Gamma \to \Gamma . A$;

\item \textsc{($\Pi$-Comp)} $\mathrm{App}_{A, B} (\lambda_{A, B} (\beta) , \alpha) = \beta \{ \overline{\alpha} \}$;

\item \textsc{($\Pi$-Subst)} Given $\Delta \in \mathcal{C}$ and $\phi : \Delta \to \Gamma$ in $\mathcal{C}$, $\Pi (A, B) \{ \phi \} = \Pi (A\{\phi\}, B\{\phi_A^+\})$, where $\phi_A^+ \colonequals \langle \phi \circ \mathrm{p}_{A\{ \phi \}}, \mathrm{v}_{A\{ \phi \}} \rangle_A : \Delta . A\{ \phi \} \to \Gamma . A$;

\item \textsc{($\lambda$-Subst)} $\lambda_{A, B} (\beta) \{ \phi \} = \lambda_{A\{\phi\}, B\{\phi_A^+\}} (\beta \{ \phi_A^+ \}) \in \mathrm{Tm} (\Delta, \Pi (A\{\phi\}, B\{ \phi_A^+\}))$;

\item \textsc{(App-Subst)} $\mathrm{App}_{A, B} (\kappa, \alpha) \{ \phi \} = \mathrm{App}_{A\{\phi\}, B\{\phi_A^+\}} (\kappa \{ \phi \}, \alpha \{ \phi \}) \in \mathrm{Tm} (\Delta, B\{ \overline{\alpha} \} \{ \phi \})$. 
\end{itemize}

Furthermore, $\mathcal{C}$ \emph{\bfseries strictly supports Pi-types} if it additionally satisfies
\begin{itemize}
\item \textsc{($\lambda$-Uniq)} $\lambda_{A, B} \circ \mathrm{App}_{A\{\mathrm{p}_A\}, B\{(\mathrm{p}_A)_{A\{\mathrm{p}_A\}}^+\}}(\kappa\{ \mathrm{p}_A \}, \mathrm{v}_A) = \kappa$.

\end{itemize}
\end{definition}

Pi-types (with $\eta$-rule) are modelled in a CwF that (strictly) supports Pi-types \cite{hofmann1997syntax}.
Let us now proceed to show that the CwF $\mathbb{WPG}$ strictly supports Pi-types.

\begin{lemma}[Currying]
\label{LemPi-SigmaCorrespondence}
Given $\Gamma \in \mathbb{WPG}$, $A \in \mathscr{D}^{\mathrm{w}}(\Gamma)$ and $B \in \mathscr{D}^{\mathrm{w}}(\Sigma(\Gamma, A))$, there is a bijection $\lambda_{A, B} : \mathbb{WPG}(\Sigma(\Gamma, A), B) \stackrel{\sim}{\rightarrow} \mathbb{WPG}(\Gamma, \Pi(A, B))$, where $\Pi(A, B) \in \mathscr{D}^{\mathrm{w}}(\Gamma)$ is defined by $|\Pi(A, B)| \colonequals |A| \Rightarrow |B|$ and $\Pi(A, B)(\gamma_0^\dagger) \colonequals \Pi(A(\gamma_0^\dagger), B_{\gamma_0^\dagger})$ for all $\gamma_0^\dagger \in \mathbb{WPG}(\oc \Gamma)$, and $B_{\gamma_0^\dagger} \in \mathscr{D}^{\mathrm{w}}(A(\gamma_0^\dagger))$ by $|B_{\gamma_0^\dagger}| \colonequals |B|$ and $B_{\gamma_0^\dagger}(\alpha_0^\dagger) \colonequals B(\langle \gamma_0, \alpha_0 \rangle^\dagger)$ for all $\alpha_0^\dagger \in \mathbb{WPG}(\oc A(\gamma_0^\dagger))$.
\end{lemma}
\begin{proof}
Let $\phi \in \mathbb{WPG}(\Sigma(\Gamma, A), B)$; we first construct $\phi' \in \mathbb{WPG}(\Gamma, \Pi(A, B))$ from $\phi$ as follows.
By the isomorphism $|\oc \Sigma(\Gamma, A)| = \oc (|\Gamma| \mathbin{\&} |A|) \cong \oc |\Gamma| \otimes \oc |A|$, we adjust `tags' or \emph{curry} $\phi$ with respect to the adjunction between tensor $\otimes$ and linear implication $\multimap$ \cite{mccusker1998games}, obtaining another strategy $\phi' : |\Gamma| \Rightarrow (|A| \Rightarrow |B|)$.

Next, let us verify $\overline{\phi'}_{|\Pi(\Gamma, \Pi(A, B))|} \preccurlyeq \Pi(\Gamma, \Pi(A, B))(\phi')$ by induction on the length of positions, where again the only nontrivial case is the inductive step on even-length positions.
This inductive step goes through by $\phi \in \mathbb{WPG}(\Sigma(\Gamma, A), B)$ thanks to the correspondence between the constraints on $\Pi(\Sigma(\Gamma, A), B)(\phi)$ and $\Pi(\Gamma, \Pi(A, B))(\phi')$.

The same argument also shows that $\phi' \cap \Pi(\Gamma, \Pi(A, B))(\phi') : \Pi(\Gamma, \Pi(A, B))(\phi')$ is winning and w.b., and hence we have proven $\lambda_{A, B}(\phi) \colonequals \phi' \in \mathbb{WPG}(\Gamma, \Pi(A, B))$.

Finally, the inverse of this construction $\lambda_{A, B}$ is given in the same vein.
\end{proof}

\begin{example}
\label{ExPiSigma}
Define $N_b^+ \in \mathscr{D}^{\mathrm{w}}(\Sigma(N, N_b))$ by $|N_b^+| \colonequals N$ and $N_b^+(\langle \underline{k}, \underline{n} \rangle^\dagger) \colonequals N_b(\underline{k+n}^\dagger)$ for all $k, n \in \mathbb{N}$ such that $k \geqslant n$.
Then, Lemma~\ref{LemPi-SigmaCorrespondence} allows us to obtain $\lambda_{N_b, N_b^+}(\mathrm{der}_N) \in \mathbb{WPG}(N, \Pi(N_b, N_b^+))$ from $\mathrm{der}_N = \mathrm{v}_{N_b^+} \in \mathbb{WPG}(\Sigma(N, N_b), N_b^+)$.
\end{example}

\begin{theorem}[Game semantics of Pi-types]
\label{ThmGameSemanticsOfPiType}
$\mathbb{WPG}$ strictly supports Pi-types.
\end{theorem}
\begin{proof}
Let $\Gamma \in \mathbb{WPG}$, $A \in \mathscr{D}^{\mathrm{w}}(\Gamma)$, $B \in \mathscr{D}^{\mathrm{w}}(\Sigma(\Gamma, A))$ and $\beta \in \mathbb{WPG}(\Sigma(\Gamma, A), B)$.
\begin{itemize}

\item \textsc{($\Pi$-Form)} $\Pi(A, B) \in \mathscr{D}^{\mathrm{w}}(\Gamma)$ is defined in Lemma~\ref{LemPi-SigmaCorrespondence}.
 
\item \textsc{($\Pi$-Intro)} By Lemma~\ref{LemPi-SigmaCorrespondence}, we obtain $\lambda_{A, B} (\beta) \in \mathbb{WPG}(\Gamma, \Pi(A, B))$. 
We often omit the subscripts $(\_)_{A, B}$ on $\lambda_{A, B}$ and the inverse $\lambda_{A, B}^{-1}$.

\item \textsc{($\Pi$-Elim)} $\mathrm{App}_{A, B} (\kappa, \alpha) \colonequals \lambda_{A, B}^{-1}(\kappa) \{\overline{\alpha} \}$ for all $\kappa \in \mathbb{WPG}(\Gamma, \Pi(A, B))$ and $\alpha \in \mathbb{WPG}(\Gamma, A)$. 
We indeed have $\mathrm{App}_{A, B} (\kappa, \alpha) = \lambda_{A, B}^{-1}(\kappa) \{ \overline{\alpha} \} \in \mathbb{WPG}(\Gamma, B\{ \overline{\alpha} \})$ by Theorem~\ref{ThmWellDefinedWPG}. 
We often omit the subscripts $(\_)_{A, B}$ on $\mathrm{App}_{A, B}$.

\item \textsc{($\Pi$-Comp)} $\mathrm{App}_{A, B} (\lambda_{A, B} (\beta) , \alpha) = \lambda_{A, B}^{-1}(\lambda_{A, B}(\beta)) \{ \overline{\alpha} \} = \beta \{ \overline{\alpha} \}$.

\item \textsc{($\Pi$-Subst)} Let $\Delta \in \mathbb{WPG}$ and $\phi \in \mathbb{WPG}(\Delta, \Gamma)$.
For the first components, we have $| \Pi (A, B) \{ \phi \} | = |\Pi(A, B)| = |A| \Rightarrow |B| = |A\{ \phi \} | \Rightarrow |B\{ \phi_A^+ \})| = | \Pi(A\{ \phi \}, B\{ \phi_A^+ \}) |$.
For the second components, we have
\begin{align*}
\textstyle \Pi (A, B) \{ \phi \} &= (\Pi (A(\phi^\dagger \circ \delta_0^\dagger), B_{\phi^\dagger \circ \delta_0^\dagger}))_{\delta_0^\dagger \in \mathbb{WPG}(\oc \Delta)} \\
&= (\Pi (A\{ \phi \}(\delta_0^\dagger), B\{ \phi_A^+ \}_{\delta_0^\dagger}))_{\delta_0^\dagger \in \mathbb{WPG}(\oc \Delta)} \\
&= \Pi(A\{ \phi \}, B\{ \phi_A^+ \}),
\end{align*}
where the second equation holds since for all $\Hat{\alpha}_0^\dagger \in \mathbb{WPG}(\oc A(\phi^\dagger \circ \delta_0^\dagger))$ we have
\begin{align*}
B\{\phi_A^+\}_{\delta_0^\dagger} (\Hat{\alpha}_0^\dagger) &= B\{\phi_A^+\} (\langle \delta_0, \Hat{\alpha}_0 \rangle^\dagger) \\ 
&= B (\langle \phi \bullet \mathrm{p}_{A \{ \phi \}}, \mathrm{v}_{A\{ \phi \}} \rangle^\dagger \bullet \langle \delta_0, \Hat{\alpha}_0 \rangle) \\
&= B(\langle \phi \bullet \delta_0, \Hat{\alpha}_0 \rangle^\dagger) \\
&= B_{\phi^\dagger \circ \delta_0^\dagger} (\Hat{\alpha}_0^\dagger).
\end{align*}
We have shown $\| \Pi (A, B) \{ \phi \} \| = \| \Pi(A\{ \phi \}, B\{ \phi_A^+ \}) \|$ as well.

\item \textsc{($\lambda$-Subst)} By the definition of $\lambda$, we have $\lambda_{A, B} (\beta) \{ \phi \} = \lambda_{A\{\phi\}, B\{\phi_A^+\}} (\beta \{ \phi_A^+ \})$.
\if0
\begin{align*}
\lambda_{A, B} (\beta) \{ \phi \} &= \iota \bullet \lambda_{A, B} (\beta) \bullet \phi \\
&= \lambda_{A\{\phi\}, B\{\phi^+\}} (\iota \bullet \beta^\dagger \circ \langle \phi \bullet \mathrm{fst}_{\Sigma(\Delta, A\{\phi\})}, \mathrm{snd}_{\Sigma(\Delta, A\{\phi\})} \rangle^\dagger) \quad \text{(by the definition of $\lambda$)} \\
&= \lambda_{A\{\phi\}, B\{\phi^+\}} (\iota \bullet \beta^\dagger \bullet \langle \phi \bullet \mathrm{p}_{A \{ \phi \}}, \mathrm{v}_{A\{f\}} \rangle) \\
&= \lambda_{A\{\phi\}, B\{\phi^+\}} (\beta \{ \phi^+ \}).
\end{align*}
\fi

\item \textsc{(App-Subst)} We have
\begin{align*}
\mathrm{App}_{A, B} (\kappa, \alpha) \{ \phi \} &= \lambda_{A, B}^{-1} (\kappa) \{ \langle \mathrm{der}_{|\Gamma|}, \alpha \rangle \bullet \phi \} \\
&= \lambda_{A, B}^{-1} (\kappa) \{ \langle \phi, \alpha \{ \phi \} \rangle \} \\
&= \lambda_{A, B}^{-1} (\kappa) \{ \langle \phi \bullet \mathrm{p}_{A \{ \phi \}}, \mathrm{v}_{A\{\phi\}} \rangle \bullet \langle \mathrm{der}_{|\Delta|}, \alpha \{ \phi \} \rangle \} \\
&= \lambda_{A, B}^{-1} (\kappa) \{ \phi_A^+ \} \{ \overline{\alpha \{ \phi \}} \} \\
&= \lambda_{A\{ \phi \}, B\{ \phi_A^+ \}}^{-1} (\kappa \{ \phi \}) \{ \overline{\alpha \{ \phi \}} \} \quad \text{(by $\lambda$-Subst)} \\
&= \mathrm{App}_{A\{ \phi \}, B\{ \phi_A^+ \}} (\kappa \{ \phi \}, \alpha \{ \phi \}).
\end{align*}

\item \textsc{($\lambda$-Uniq)} We have
\begin{small}
\begin{align*} 
\lambda_{A, B}(\mathrm{App}_{A\{\mathrm{p}_A\}, B\{ \mathrm{p}_A^+ \}}(\kappa\{ \mathrm{p}_A \}, \mathrm{v}_A)) &= \lambda_{A, B}(\lambda^{-1}_{A\{\mathrm{p}_A\}, B\{ \mathrm{p}_A^+ \}}(\kappa\{ \mathrm{p}_A \}) \{ \overline{\mathrm{v}_A} \}) \\
&= \lambda_{A, B}(\lambda^{-1}_{A, B}(\kappa) \{ \mathrm{p}_A^+ \} \{ \overline{\mathrm{v}_A} \}) \quad \text{(by $\lambda$-Subst)} \\
&= \lambda_{A, B}(\lambda^{-1}_{A, B}(\kappa) \{ \langle \mathrm{p}_A \bullet \mathrm{p}_{A\{ \mathrm{p}_A \}}, \mathrm{v}_{A\{ \mathrm{p}_A \}} \rangle \bullet \langle \mathrm{der}_{|\Sigma(\Gamma, A)|}, \mathrm{v}_A \rangle \}) \\
&= \lambda_{A, B}(\lambda^{-1}_{A, B}(\kappa) \bullet \langle \mathrm{p}_A, \mathrm{v}_A \rangle) \\
&= \lambda_{A, B}(\lambda^{-1}_{A, B}(\kappa) \bullet \mathrm{der}_{|\Sigma(\Gamma, A)|}) \\
&= \lambda_{A, B}(\lambda^{-1}_{A, B}(\kappa)) \\
&= k,
\end{align*}
\end{small}
\end{itemize}
which completes the proof.
\end{proof}

\if0
\begin{example}
As a generalisation of Example~\ref{ExDependentFunctionSpace1}, consider the term $\mathsf{id(x) \colonequals \lambda A^{X \to U} x^{A(x)} . \, x}$ with the judgement $\mathsf{x : X \vdash id(x) : \Pi_{A(x) : U} A(x) \to A(x)}$ in MLTT. 
Note that the interpretation of the term $\mathsf{id(x)}$ would be a strategy on the game $\widehat{\prod}(X, \prod(A, A \circ \mathrm{fst}))$, where $A : X \to \mathcal{U}$ and $\prod(A, A \circ \mathrm{fst}) \colonequals \widehat{\prod}(X, A[\widehat{\prod}(Ax, (A \circ \mathrm{fst})_x)/Ax])$.
Then the interpretation of $\mathsf{id(x)}$ would be the strategy that first plays as $A$ in $X$ from the second move, identifying the component game $Ax$, and then plays as the dereliction $\mathrm{der}_{Ax}$ in the codomain.
\end{example}
\fi

\subsubsection{Game semantics of Sigma-types}
\label{Sum}
Next, we consider \emph{Sigma-types}. 
Again, let us first recall their general semantic type former in an arbitrary CwF:
\begin{definition}[CwFs with Sigma-types \cite{hofmann1997syntax}]
\label{DefCwFsWithSigmaType}
A CwF $\mathcal{C}$ \mbox{\emph{\bfseries supports Sigma-types} if}
\begin{itemize}

\item \textsc{($\Sigma$-Form)} Given $\Gamma \in \mathcal{C}$, $A \in \mathrm{Ty}(\Gamma)$ and $B \in \mathrm{Ty}(\Gamma . A)$, there is a type $\Sigma (A, B) \in \mathrm{Ty}(\Gamma)$, where we also write $A \times B$ for $\Sigma (A, B)$ if $B\{ \langle \mathrm{id}_\Gamma, \alpha \rangle \} = B\{ \langle \mathrm{id}_\Gamma, \alpha' \rangle \} \in \mathrm{Ty}(\Gamma)$ for all $\alpha, \alpha' \in \mathrm{Tm}(\Gamma, A)$;

\item \textsc{($\Sigma$-Intro)} There is a morphism $\mathrm{Pair}_{A, B} : \Gamma . A . B \to \Gamma . \Sigma (A, B)$ in $\mathcal{C}$;

\item \textsc{($\Sigma$-Elim)} Given $P \in \mathrm{Ty}(\Gamma . \Sigma (A, B))$ and $\rho \in \mathrm{Tm}(\Gamma . A . B, P \{ \mathrm{Pair}_{A, B} \})$, there is a term $\mathcal{R}^{\Sigma}_{A, B, P}(\rho) \in \mathrm{Tm}(\Gamma . \Sigma (A, B), P)$;

\item \textsc{($\Sigma$-Comp)} $\mathcal{R}^{\Sigma}_{A, B, P}(\rho) \{ \mathrm{Pair}_{A, B}\} = \rho$;

\item \textsc{($\Sigma$-Subst)} Given $\Delta \in \mathcal{C}$ and $\phi : \Delta \to \Gamma$ in $\mathcal{C}$, $\Sigma (A, B) \{ \phi \} = \Sigma (A\{\phi\}, B\{\phi_A^+\})$, where $\phi_A^+ \colonequals \langle \phi \circ \mathrm{p}_{A\{ \phi \}}, \mathrm{v}_{A\{\phi\}} \rangle_A : \Delta . A\{\phi\} \to \Gamma . A$;

\item \textsc{(Pair-Subst)} $\mathrm{p}_{\Sigma (A, B)} \circ \mathrm{Pair}_{A, B} = \mathrm{p}_A \circ \mathrm{p}_B$ and $\phi_{\Sigma(A, B)}^+ \circ \mathrm{Pair}_{A\{\phi\}, B\{\phi_A^+\}} = \mathrm{Pair}_{A, B} \circ \phi^{++}_{A, B}$, where $\phi^{++}_{A, B} \colonequals (\phi_A^+)^+_B : \Delta . A\{\phi\} . B\{\phi_A^+\} \to \Gamma . A . B$;

\item \textsc{($\mathcal{R}^{\Sigma}$-Subst)} $\mathcal{R}^{\Sigma}_{A, B, P}(p) \{\phi^+_{\Sigma(A, B)}\} = \mathcal{R}^{\Sigma}_{A\{f\}, B\{\phi_A^+\}, P\{\phi^+_{\Sigma(A, B)}\}} (p \{ \phi_{A, B}^{++} \})$.

\end{itemize}

In addition, $\mathcal{C}$ \emph{\bfseries strictly supports Sigma-types} if it also satisfies
\begin{itemize}
\item \textsc{($\mathcal{R}^{\Sigma}$-Uniq)} $\Check{\rho} = \mathcal{R}^{\Sigma}_{A, B, P}(\rho)$ if $\Check{\rho} \in \mathrm{Tm}(\Gamma . \Sigma(A, B), P)$ and $\Check{\rho} \{ \mathrm{Pair}_{A, B} \} = \rho$.
\end{itemize}
\end{definition}

Sigma-types (with $\eta$-rule) are modelled in a CwF that (strictly) supports Sigma-types \cite{hofmann1997syntax}.
Now, we show that our CwF $\mathbb{WPG}$ strictly supports Sigma-types:
\begin{theorem}[Game semantics of Sigma-types]
\label{ThmGameSemanticsOfSigmaType}
$\mathbb{WPG}$ strictly supports Sigma-types.
\end{theorem}
\begin{proof}
Let $\Gamma \in \mathbb{WPG}$, $A \in \mathscr{D}^{\mathrm{w}}(\Gamma)$, $B \in \mathscr{D}^{\mathrm{w}}(\Sigma(\Gamma, A))$ and $P \in \mathscr{D}^{\mathrm{w}}(\Sigma(\Gamma, \Sigma(A,B)))$.
\begin{itemize}

\item \textsc{($\Sigma$-Form)} Similarly to pi $\Pi$, $\Sigma (A, B) \colonequals (|A| \mathbin{\&} |B|, (\Sigma(A(\gamma_0^\dagger), B_{\gamma_0^\dagger}))_{\gamma_0^\dagger \in \mathbb{WPG}(\oc \Gamma)})$.

\item \textsc{($\Sigma$-Intro)} By the evident bijection $\Sigma(\Sigma(\Gamma,A),B) \cong \Sigma(\Gamma, \Sigma(A,B))$, define $\mathrm{Pair}_{A, B} : \Sigma(\Sigma(\Gamma,A),B) \rightarrow \Sigma(\Gamma, \Sigma(A,B))$ in $\mathbb{WPG}$ to be the dereliction up to `tags,' i.e., $\mathrm{Pair}_{A, B} \colonequals \langle \mathrm{p}_A \bullet \mathrm{p}_B, \langle \mathrm{v}_A \{ \mathrm{p}_B \}, \mathrm{v}_B \rangle \rangle$. 
Note that the inverse $\mathrm{Pair}^{-1}_{A, B}$ is $\langle \langle \mathrm{p}_{\Sigma(A, B)}, \varpi^{A, B}_1 \rangle, \varpi^{A, B}_2 \rangle$, where $\varpi^{A, B}_1 : \Sigma(\Gamma, \Sigma(A, B)) \rightarrow A \{ \mathrm{p}_{\Sigma(A, B)} \}$ and $\varpi^{A, B}_2 : \Sigma(\Gamma, \Sigma(A, B)) \rightarrow B \{ \langle \mathrm{p}_{\Sigma(A, B)}, \varpi^{A, B}_1 \rangle \}$ are the derelictions up to `tags.'

\item \textsc{($\Sigma$-Elim)} Given $\rho \in \mathbb{WPG}(\Sigma(\Sigma(\Gamma, A), B), P\{\mathrm{Pair}_{A,B}\})$, define $\mathcal{R}^{\Sigma}_{A, B, P}(\rho) \colonequals \rho \{ \mathrm{Pair}_{A, B}^{-1} \} \in \mathbb{WPG}(\Sigma(\Gamma, \Sigma (A, B)), P\{\mathrm{Pair}_{A,B}\}\{\mathrm{Pair}_{A,B}^{-1}\}) = \mathbb{WPG}(\Sigma(\Gamma, \Sigma (A, B)), P)$.

\item \textsc{($\Sigma$-Comp)} We have
\begin{align*}
\mathcal{R}^{\Sigma}_{A, B, P}(\rho) \{ \mathrm{Pair}_{A, B}\} &= \rho \{ \mathrm{Pair}_{A, B}^{-1} \} \{ \mathrm{Pair}_{A, B} \} \\
&= \rho \{ \mathrm{Pair}_{A, B}^{-1} \bullet \mathrm{Pair}_{A, B} \} \\
&= \rho \{ \mathrm{id}_{\Sigma(\Sigma(\Gamma, A), B)} \} \\
&= \rho.
\end{align*}

\item \textsc{($\Sigma$-Subst)} Similar to the case of pi $\Pi$ (Theorem~\ref{ThmGameSemanticsOfPiType}).

\item \textsc{(Pair-Subst)} $\mathrm{p}_{\Sigma (A, B)} \bullet \mathrm{Pair}_{A, B} = \mathrm{p}_{\Sigma (A, B)} \bullet \langle \mathrm{p}_A \bullet \mathrm{p}_B, \langle \mathrm{v}_A \{ \mathrm{p}_B \}, \mathrm{v}_B \rangle \rangle \notag = \mathrm{p}_A \bullet \mathrm{p}_B$ and
\begin{scriptsize}
\begin{align*}
\phi^+_{\Sigma(A, B)} \bullet \mathrm{Pair}_{A\{\phi\}, B\{\phi_A^+\}} &= \langle \phi \bullet \mathrm{p}_{\Sigma(A,B)\{\phi\}}, \mathrm{v}_{\Sigma(A,B)\{\phi\}} \rangle \bullet \mathrm{Pair}_{A\{\phi\}, B\{\phi_A^+\}} \\
&= \langle \phi \bullet \mathrm{p}_{\Sigma(A\{\phi\},B\{\phi_A^+\})} \bullet \mathrm{Pair}_{A\{\phi\}, B\{\phi_A^+\}}, \mathrm{v}_{\Sigma(A,B)\{\phi\}} \{ \mathrm{Pair}_{A\{ \phi \}, B\{ \phi_A^+ \}} \}\rangle \\
&= \langle \phi \bullet \mathrm{p}_{A\{ \phi \}} \bullet \mathrm{p}_{B\{ \phi_A^+ \}}, \mathrm{v}_{\Sigma(A\{ \phi \},B\{ \phi_A^+ \})} \{ \mathrm{Pair}_{A\{ \phi \}, B\{ \phi_A^+ \}} \} \rangle \quad \text{(by the above equation)} \\
&= \langle \phi \bullet \mathrm{p}_{A\{ \phi \}} \bullet \mathrm{p}_{B\{ \phi_A^+ \}}, \langle \mathrm{v}_{A\{\phi\}} \{ \mathrm{p}_{B\{ \phi_A^+ \}} \}, \mathrm{v}_{B\{ \phi_A^+ \}} \rangle \rangle \\
&= \langle \mathrm{p}_A \bullet \mathrm{p}_B, \langle \mathrm{v}_A \{ \mathrm{p}_B \}, \mathrm{v}_B \rangle \rangle \bullet \langle \langle \phi \bullet \mathrm{p}_{A\{ \phi \}} \bullet \mathrm{p}_{B\{ \phi_A^+ \}}, \mathrm{v}_{A\{ \phi \}}\{ \mathrm{p}_{B\{ \phi_A^+ \}} \} \rangle, \mathrm{v}_{B\{ \phi_A^+ \}} \rangle \\
&= \langle \mathrm{p}_A \bullet \mathrm{p}_B, \langle \mathrm{v}_A \{ \mathrm{p}_B \}, \mathrm{v}_B \rangle \rangle \bullet \langle \langle \phi \bullet \mathrm{p}_{A\{ \phi \}}, \mathrm{v}_{A\{ \phi \}} \rangle \bullet \mathrm{p}_{B\{ \phi_A^+ \}}, \mathrm{v}_{B\{ \phi_A^+ \}} \rangle \\
&= \mathrm{Pair}_{A, B} \bullet \langle \phi_A^+ \bullet \mathrm{p}_{B\{ \phi_A^+ \}}, \mathrm{v}_{B\{ \phi_A^+ \}} \rangle \\
&= \mathrm{Pair}_{A, B} \bullet \phi_{A, B}^{++}.
\end{align*}
\end{scriptsize}

\item \textsc{($\mathcal{R}^{\Sigma}$-Subst)} We have
\begin{scriptsize}
\begin{align*}
\mathcal{R}^{\Sigma}_{A, B, P}(\rho) \{ \phi^+_{\Sigma(A, B)} \} &= \rho \{ \mathrm{Pair}_{A, B}^{-1} \} \{ \langle \phi \bullet \mathrm{p}_{\Sigma(A,B)\{ \phi \}}, \mathrm{v}_{\Sigma(A,B)\{ \phi \}} \rangle \} \\
&= \rho \{ \langle \langle \mathrm{p}_{\Sigma(A, B)}, \varpi^{A, B}_1 \rangle, \varpi^{A, B}_2 \rangle \bullet \langle \phi \bullet \mathrm{p}_{\Sigma(A,B)\{ \phi \}}, \mathrm{v}_{\Sigma(A,B)\{ \phi \}} \rangle \} \\
&= \rho \{ \langle \langle \phi \bullet \mathrm{p}_{\Sigma(A, B)\{ \phi \}}, \varpi^{A\{ \phi \}, B\{ \phi_A^+ \}}_1 \rangle, \varpi^{A\{ \phi \}, B\{ \phi_A^+ \}}_2 \rangle \} \quad \text{(by the definition of $\varpi_1$ and $\varpi_2$)} \\
&= \rho \{ \langle \langle \phi \bullet \mathrm{p}_{A\{ \phi \}}, \mathrm{v}_{A\{\phi\}} \rangle \bullet \mathrm{p}_{B\{ \phi_A^+ \}}, \mathrm{v}_{B\{ \phi_A^+ \}} \rangle \} \{ \langle \langle \mathrm{p}_{\Sigma(A, B)\{ \phi \}}, \varpi^{A\{ \phi \}, B\{ \phi_A^+ \}}_1 \rangle, \varpi^{A\{ \phi \}, B\{ \phi_A^+ \}}_2 \rangle \} \\
&= \rho \{ \langle \phi_A^+ \bullet \mathrm{p}_{B\{ \phi_A^+ \}}, \mathrm{v}_{B\{ \phi_A^+ \}} \rangle \} \{ \mathrm{Pair}_{A\{ \phi \}, B\{ \phi_A^+ \}}^{-1} \} \\
&= \mathcal{R}^{\Sigma}_{A\{ \phi \}, B\{ \phi_A^+ \}, P\{ \phi_{\Sigma(A, B)}^+ \}} (\rho \{ \phi_{A, B}^{++} \}).
\end{align*}
\end{scriptsize}

\item \textsc{($\mathcal{R}^{\Sigma}$-Uniq)} Given $\Check{\rho} \in \mathbb{WPG}(\Sigma(\Gamma, \Sigma(A, B)), P)$ with $\Check{\rho} \{ \mathrm{Pair}_{A, B} \} = \rho$, we have
\begin{equation*}
\Check{\rho} = \Check{\rho} \{ \mathrm{id}_{\Sigma(\Gamma, \Sigma(A, B))} \} = \Check{\rho} \{ \mathrm{Pair}_{A, B} \} \{ \mathrm{Pair}_{A, B}^{-1} \} = \rho \{ \mathrm{Pair}_{A, B}^{-1} \} = \mathcal{R}^{\Sigma}_{A, B, P}(\rho),
\end{equation*}
\end{itemize}
which completes the proof.
\end{proof}

\if0
\begin{example}
Consider the term $\mathsf{k : N \vdash (FS^N(k), id_{FS^N(k)}) : \sum_{A(k) : U} A(k) \to A(k)}$ in MLTT, where $\mathsf{k : N \vdash \mathsf{FS^N}(k) : U}$ is a dependent type of finite sequences of numbers. 
Then the term $\mathsf{(FS^N(k), id_{FS^N(k)})}$ would be interpreted as the strategy $\langle \mathcal{FS}^N, \mathrm{der}_{\mathcal{FS}^N} \rangle : \widehat{\prod}(N, \sum(\mathcal{FS}^N, \underline{\textit{ENDO}}^{\mathcal{FS}^N}))$, where $\underline{\textit{ENDO}}^{\mathcal{FS}^N}  \colonequals \underline{\textit{ENDO}} \circ \mathcal{FS}^N \circ \mathrm{fst}: \widehat{\sum}(N, \mathcal{FS}^N) \to \mathcal{U}$.
\end{example}
\fi

\subsubsection{Game semantics of N-type}
\label{GameTheoreticNaturalNumbers}
We next present our game semantics of \emph{N-type}.
Again, we first recall the general semantic type former for N-type in an arbitrary CwF:
\begin{definition}[CwFs with N-type \cite{hofmann1997syntax}]
A CwF $\mathcal{C}$ \emph{\bfseries supports N-type} if
\begin{itemize}

\item \textsc{($N$-Form)} Given $\Gamma \in \mathcal{C}$, there is a type $N^{[\Gamma]} \in \mathrm{Ty}(\Gamma)$. 
We abbreviate it as~$N$.

\item \textsc{($N$-Intro)} There are a term $\underline{0}_{\Gamma} \in \mathrm{Tm}(\Gamma, N)$ and a morphism $\mathrm{succ}_{\Gamma} : \Gamma . N \to \Gamma . N$ in $\mathcal{C}$ that satisfy for any morphisms $\phi : \Delta \to \Gamma$ and $\psi : \Delta . N \to \Gamma$ in $\mathcal{C}$
\begin{mathpar}
\underline{0}_{\Gamma} \{ \phi \} = \underline{0}_\Delta \in \mathrm{Tm}(\Delta, N) \and
\mathrm{p}_N \circ \mathrm{succ}_{\Gamma} = \mathrm{p}_N : \Gamma . N \to \Gamma \and
\mathrm{succ}_{\Gamma} \circ \langle \psi, \mathrm{v}_{N} \rangle_{N} = \langle \psi, \mathrm{v}_{N}\{\mathrm{succ}_{\Delta}\} \rangle_{N} : \Delta . N \to \Gamma . N,
\end{mathpar}
where the last equation makes sense since $\mathrm{succ}_{\Gamma} \circ \langle \psi, \mathrm{v}_{N} \rangle_{N}, \langle \psi, \mathrm{v}_{N}\{\mathrm{succ}_{\Delta}\} \rangle_{N} : \Delta . N \to \Gamma . N$ by $N$-Subst given below.
We henceforth skip the same remark.

\begin{notation}
Define $\mathrm{zero}_\Gamma \colonequals \langle \mathrm{id}_\Gamma, \underline{0}_\Gamma \rangle_N : \Gamma \to \Gamma . N$ for each $\Gamma \in \mathcal{C}$; it satisfies $\mathrm{zero}_\Gamma \circ \phi = \langle \phi, \underline{0}_\Delta \rangle_N = \langle \phi, \mathrm{v}_N \{ \mathrm{zero}_\Delta \} \rangle_N : \Delta \to \Gamma . N$ for any $\phi : \Delta \to \Gamma$ in $\mathcal{C}$.
We often omit the subscript $(\_)_\Gamma$ on $\underline{0}$, $\mathrm{zero}$ and $\mathrm{succ}$.
Define $\underline{n}_\Gamma \in \mathrm{Tm}(\Gamma, N)$ for each $n \in \mathbb{N}$ by: $\underline{0}_\Gamma$ is already given, and $\underline{n+1}_\Gamma \colonequals \mathrm{v}_N \{ \mathrm{succ}_\Gamma \circ \langle \mathrm{id}_\Gamma, \underline{n}_\Gamma \rangle_N \}$.
\end{notation}

\item \textsc{($N$-Elim)} Given a type $P \in \mathrm{Ty}(\Gamma . N)$, and terms $c_{\mathrm{z}} \in \mathrm{Tm}(\Gamma, P\{\mathrm{zero}\})$ and $c_{\mathrm{s}} \in \mathrm{Tm}(\Gamma . N . P, P\{ \mathrm{succ} \circ \mathrm{p}_P\})$, there is a term $\mathcal{R}^{N}_{P}(c_{\mathrm{z}}, c_{\mathrm{s}}) \in \mathrm{Tm}(\Gamma . N, P)$;

\item \textsc{($N$-Comp)} We have
\begin{align*}
\mathcal{R}^{N}_{P}(c_{\mathrm{z}}, c_{\mathrm{s}}) \{ \mathrm{zero} \} &= c_{\mathrm{z}} \in \mathrm{Tm}(\Gamma, P\{\mathrm{zero}\}); \\
\mathcal{R}^{N}_{P}(c_{\mathrm{z}}, c_{\mathrm{s}}) \{ \mathrm{succ} \} &= c_{\mathrm{s}} \{ \langle \mathrm{id}_{\Gamma . N}, \mathcal{R}^{N}_{P}(c_{\mathrm{z}}, c_{\mathrm{s}}) \rangle_{P} \} \in \mathrm{Tm}(\Gamma . N, P\{\mathrm{succ}\});
\end{align*}

\item \textsc{($N$-Subst)} $N^{[\Gamma]}\{\phi\} = N^{[\Delta]} \in \mathrm{Ty}(\Delta)$;

\item \textsc{($\mathcal{R}^{N}$-Subst)} $\mathcal{R}^{N}_{P}(c_{\mathrm{z}}, c_{\mathrm{s}}) \{\phi_N^+\} = \mathcal{R}^{N}_{P\{\phi_N^+\}}(c_{\mathrm{z}}\{\phi\}, c_{\mathrm{s}}\{\phi_{N, P}^{++}\}) \in \mathrm{Tm}(\Delta . N, P\{\phi_N^+\})$, where $\phi_N^+ \colonequals \langle \phi \circ \mathrm{p}_N, \mathrm{v}_{N} \rangle_{N} : \Delta . N \to \Gamma . N$ \mbox{and $\phi_{N, P}^{++} \colonequals (\phi_N^+)^+_P : \Delta . N . P\{\phi_N^+\} \to \Gamma . N . P$}.

\end{itemize}
\end{definition}

Let us now present our game semantics of N-type, which is based on the standard game semantics of PCF \cite{abramsky1999game}:
\begin{theorem}[Game semantics of N-type]
\label{ThmGameSemanticsOfNType}
$\mathbb{WPG}$ supports N-type.
\end{theorem}
\begin{proof}
Let $\Gamma, \Delta \in \mathbb{WPG}$, $\phi \in \mathbb{WPG}(\Delta, \Gamma)$ and $\psi \in \mathbb{WPG}(\Sigma(\Delta, \{ N \}_{\oc \Delta}), \Gamma)$.

\begin{itemize}
\item \textsc{($N$-Form)} $N^{[\Gamma]}$ is the constant one $\{ N \}_{\oc \Gamma} \in \mathscr{D}^{\mathrm{w}}(\Gamma)$ at $N$ (Example~\ref{ExPGames}).

\item \textsc{($N$-Intro)} 
$\underline{0}_\Gamma \in \mathbb{WPG}(\Gamma, \{ N \})$ is $\underline{0} : N$ (Examples~\ref{ExamplesOfGames} and \ref{ExPGames}) up to `tags,' and $\mathrm{succ}_\Gamma \colonequals \langle \mathrm{p}, \mathrm{sc}_\Gamma \rangle \in \mathbb{WPG}(\Sigma(\Gamma, \{ N \}), \Sigma(\Gamma, \{ N \}))$, where we define $\mathrm{sc}_\Gamma \colonequals \mathrm{Pref}(\{ \, q_{[1]}q_{[0]}n_{[0]}(n+1)_{[1]} \mid n \in \mathbb{N} \, \})^{\mathrm{Even}} \in \mathbb{WPG}(\Sigma(\Gamma, \{ N_{[0]} \}), \{ N_{[1]} \})$.

Clearly, we have $\underline{0}_\Gamma \bullet \phi = \underline{0}_\Delta$ and $\mathrm{sc}_\Gamma \bullet \langle \psi, \mathrm{v}_{\{N\}_\Delta} \rangle = \mathrm{sc}_\Delta = \mathrm{v}_{\{N\}_\Delta}\{ \mathrm{succ}_\Delta \}$, and therefore the required three equations hold.

\item \textsc{($N$-Elim)} Given $P \in \mathscr{D}^{\mathrm{w}}(\Sigma(\Gamma, \{ N \}))$, $c_{\mathrm{z}} \in \mathbb{WPG}(\Gamma, P\{ \mathrm{zero} \})$ and $c_{\mathrm{s}} \in \mathbb{WPG}(\Sigma(\Sigma(\Gamma, \{ N \}), P), P\{ \mathrm{succ} \bullet \mathrm{p}_P\})$, there are two terms 
\begin{align*}
&\widetilde{c}_{\mathrm{z}} \in \mathbb{WPG}(\Sigma(\Pi(\Sigma(\Gamma, \{N\}), P), \{ \Sigma(\Gamma, \{ N \}) \}), P\{ \mathrm{zero} \bullet \mathrm{p} \bullet \mathrm{v} \}); \\
&\widetilde{c}_{\mathrm{s}} \in \mathbb{WPG}(\Sigma(\Pi(\Sigma(\Gamma, \{N\}), P), \{ \Sigma(\Gamma, \{ N \}) \}), P\{ \mathrm{succ} \bullet \mathrm{v} \}),
\end{align*}
where note that $\mathrm{v} \in \mathbb{WPG}(\Sigma(\Pi(\Sigma(\Gamma, \{N\}), P), \{ \Sigma(\Gamma, \{ N \}) \}), \{ \Sigma(\Gamma, \{ N \}) \} \{ \mathrm{p} \})$ is just $\pi_2 \in \mathbb{WPG}(\Pi((\Gamma \mathbin{\&} N), P) \mathbin{\&} (\Gamma \mathbin{\&} N ), \Gamma \mathbin{\&} N)$, defined respectively by
\begin{footnotesize}
\begin{align*}
\widetilde{c}_{\mathrm{z}} &\colonequals \big(|\Pi(\Sigma(\Gamma, \{ N \}), P)| \mathbin{\&} |\Sigma(\Gamma, \{ N \})| \stackrel{\mathrm{v}}{\longrightarrow} |\Sigma(\Gamma, \{ N \})| \stackrel{\mathrm{p}}{\longrightarrow} |\Gamma| \stackrel{c_{\mathrm{z}}}{\longrightarrow} |P\{ \mathrm{zero} \}|\big); \\
\widetilde{c}_{\mathrm{s}} &\colonequals \big(|\Pi(\Sigma(\Gamma, \{N\}), P)| \mathbin{\&} |\Sigma(\Gamma, \{N\})| \stackrel{\langle \mathrm{v}, \mathrm{ev}_P \rangle}{\longrightarrow} |\Sigma(\Gamma, \{N\})| \mathbin{\&} |P| \stackrel{c_{\mathrm{s}}}{\longrightarrow} \textstyle |P\{ \mathrm{succ} \bullet \mathrm{p} \}|\big),
\end{align*}
\end{footnotesize}where $\mathrm{ev}_P \in \mathbb{WPG}(\Sigma(\Pi(\Sigma(\Gamma, \{N\}), P), \{ \Sigma(\Gamma, \{N\}) \}), P\{\mathrm{v}\})$ is the \emph{evaluation} \cite{abramsky1997semantics}, i.e., $\mathrm{ev}_P \colonequals \lambda^{-1}(\mathrm{der}_{\Pi(\Sigma(\Gamma, \{ N \}), P)})$. 

Next, we define $\mathrm{pred}_\Gamma \colonequals \langle \mathrm{p}, \mathrm{pd}_\Gamma \rangle \in \mathbb{WPG}(\Sigma(\Gamma, \{ N \}), \Sigma(\Gamma, \{ N \}))$, where $\mathrm{pd}_\Gamma \colonequals \mathrm{Pref}(\{ q_{[1]}q_{[0]}0_{[0]}0_{[1]} \} \cup \{ \, q_{[1]}q_{[0]}(n+1)_{[0]}n_{[1]} \mid n \in \mathbb{N} \, \})^{\mathrm{Even}} \in \mathbb{WPG}(\Sigma(\Gamma, \{ N_{[0]} \}), \{ N_{[1]} \})$, and often omit the subscript $(\_)_\Gamma$ on $\mathrm{pred}$.
Besides, let $\mathrm{cond}_P \in \mathbb{WPG}(\Sigma(\Sigma(\Sigma(\Gamma, \{ N \}), P\{ \mathrm{zero} \bullet \mathrm{p} \}), P\{ \mathrm{succ} \bullet \mathrm{pred} \bullet \mathrm{p} \}), P\{ \mathrm{p} \bullet \mathrm{p} \})$ be the game semantics of \emph{conditionals} \cite{abramsky2000full,hyland2000full,abramsky1999game}: Given any initial move, it asks the number $n$ on $N$, and plays as the dereliction between $P\{ \mathrm{zero} \bullet \mathrm{p} \}$ and $P\{ \mathrm{p} \bullet \mathrm{p} \}$ if $n = 0$, and between $P\{ \mathrm{succ} \bullet \mathrm{pred} \bullet \mathrm{p} \}$ and $P\{ \mathrm{p} \bullet \mathrm{p} \}$ otherwise. 

We then define $\mathcal{F}^{N}_{P}(c_{\mathrm{z}}, c_{\mathrm{s}}) \in \mathbb{WPG}(\Pi(\Sigma(\Gamma, \{N\}), P), \Pi(\Sigma(\Gamma, \{N\}), P))$ by
\begin{equation}
\label{RecursiveCall}
\mathcal{F}^{N}_{P}(c_{\mathrm{z}}, c_{\mathrm{s}}) \colonequals \lambda_{\{ \Sigma(\Gamma, \{ N \}) \}, \{ P\{ \mathrm{v} \} \}} (\mathrm{cond}_P \{ \langle \langle \mathrm{v}, \widetilde{c}_{\mathrm{z}} \rangle, \widetilde{c_{\mathrm{s}}}\{ \langle \mathrm{p}, \mathrm{pred} \bullet \mathrm{v} \rangle \} \rangle \}),
\end{equation}
which is well-defined thanks to the equation $(\mathrm{succ} \bullet \mathrm{v}) \bullet \langle \mathrm{p}, \mathrm{pred} \bullet \mathrm{v} \rangle = \mathrm{succ} \bullet \mathrm{pred} \bullet \mathrm{v} = (\mathrm{succ} \bullet \mathrm{pred} \bullet \mathrm{p}) \bullet \langle \mathrm{v}, \widetilde{c}_{\mathrm{z}} \rangle$.

Finally, let $\mathcal{R}^{N}_{P}(c_{\mathrm{z}}, c_{\mathrm{s}}) \in \mathbb{WPG}(\Sigma(\Gamma, \{N\}), P)$ be the least upper bound of the chain $(\mathcal{R}^{N}_{P}(c_{\mathrm{z}}, c_{\mathrm{s}})_n : \Pi(\Sigma(\Gamma, \{N\}), P))_{n \in \mathbb{N}}$ given by
\begin{mathpar}
\mathcal{R}^{N}_{P}(c_{\mathrm{z}}, c_{\mathrm{s}})_0 \colonequals \{ \boldsymbol{\epsilon} \} \and
\mathcal{R}^{N}_{P}(c_{\mathrm{z}}, c_{\mathrm{s}})_{n+1} \colonequals \mathcal{F}^{N}_{P}(c_{\mathrm{z}}, c_{\mathrm{s}}) \bullet \mathcal{R}^{N}_{P}(c_{\mathrm{z}}, c_{\mathrm{s}})_n.
\end{mathpar}
The difference from the game semantics of \emph{fixed-point combinator} \cite{abramsky2000full,hyland2000full,abramsky1999game} is that the strategy $\mathrm{pred}$ occurring in $\mathcal{F}^{N}_{P}(c_{\mathrm{z}}, c_{\mathrm{s}})$ (\ref{RecursiveCall}) decreases the parameter number on $N$ at every recursive call by $\mathcal{F}^{N}_{P}(c_{\mathrm{z}}, c_{\mathrm{s}})$, so that $\mathcal{R}^{N}_{P}(c_{\mathrm{z}}, c_{\mathrm{s}})$ is total.

\item \textsc{($N$-Comp)} By the definition of $\mathcal{R}^{N}_{P}(c_{\mathrm{z}}, c_{\mathrm{s}})$, we clearly have
\begin{align*}
\mathcal{R}^{N}_{P}(c_{\mathrm{z}}, c_{\mathrm{s}}) \{ \mathrm{zero} \} &= c_{\mathrm{z}} \in \mathbb{WPG}(\Gamma, P \{ \mathrm{zero} \}); \\
\mathcal{R}^{N}_{P}(c_{\mathrm{z}}, c_{\mathrm{s}}) \{ \mathrm{succ} \} &= c_{\mathrm{s}} \{ \langle \mathrm{der}_{\Sigma(\Gamma, \{N\})}, \mathcal{R}^{N}_{P}(c_{\mathrm{z}}, c_{\mathrm{s}}) \rangle \} \in \mathbb{WPG}(\Sigma(\Gamma, \{N\}), P\{ \mathrm{succ} \}).
\end{align*}

\item \textsc{($N$-Subst)} It is clear that $\{N\}_{\oc \Gamma}\{ \phi \} = \{N\}_{\oc \Delta}$ holds.

\item \textsc{($\mathcal{R}^{N}$-Subst)}
Finally, by the definition of $\mathcal{R}^{N}_{P}(c_{\mathrm{z}}, c_{\mathrm{s}})$, we clearly have
\begin{equation*}
\mathcal{R}^{N}_{P}(c_{\mathrm{z}}, c_{\mathrm{s}}) \{ \phi_N^+ \} = \mathcal{R}^{N}_{P\{ \phi_N^+ \}}(c_{\mathrm{z}}\{ \phi \}, c_{\mathrm{s}}\{ \phi_{N, P}^{++} \})
\end{equation*}
\end{itemize}
(or alternatively show $\mathcal{R}^{N}_{P}(c_{\mathrm{z}}, c_{\mathrm{s}})_n \{ \phi_N^+ \} = \mathcal{R}^{N}_{P\{ \phi_N^+ \}}(c_{\mathrm{z}}\{ \phi \}, c_{\mathrm{s}}\{ \phi_{N, P}^{++} \})_n$ for all $n \in \mathbb{N}$ by induction on $n$ so that the above equation holds), which completes the proof.
\end{proof}

\subsubsection{Game semantics of One- and Zero-types}
Next, we interpret \emph{One}- and \emph{Zero-types}.
Since it is trivial to interpret these types, we only sketch the proof. 
See \cite{hofmann1997syntax} for the semantic type formers for these types.
 
\begin{theorem}[Game semantics of One- and Zero-types]
\label{ThmGameSemanticsOfOneAndZeroTypes}
$\mathbb{WPG}$ supports the semantic type formers for One- (in the strict sense) and Zero-types.
\end{theorem}
\begin{proof}[Proof (sketch)]
We model One- and Zero-types by the constant dependent p-games at the terminal p-game $T$ and the empty p-game $\boldsymbol{0}$, respectively (Example~\ref{ExPGames}).

The only point on One-type is that there is only the trivial one $\{ \boldsymbol{\epsilon} \} \in \mathbb{WPG}(\Gamma, \{ T \})$ for any $\Gamma \in \mathbb{WPG}$, so that we can interpret all the rules of One-type.

Finally, given $A \in \mathscr{D}^{\mathrm{w}}(\Sigma(\Gamma, \{ \boldsymbol{0} \}))$ and $\zeta \in \mathbb{WPG}(\Gamma, \{ \boldsymbol{0} \})$, the only point on Zero-type is that for the elimination rule we may obtain $\mathcal{R}^{\boldsymbol{0}}_A(\zeta) \in \mathbb{WPG}(\Gamma, A\{ \langle \mathrm{id}_\Gamma, \zeta \rangle \})$ from $\zeta$ by case distinction: Let $\mathcal{R}^{\boldsymbol{0}}_A(\zeta)$ be the one obtained from $\zeta$ by replacing the unique first move $q$ of $\boldsymbol{0}$ in $\zeta$ with those of $|A|$ if any, and $\mathcal{R}^{\boldsymbol{0}}_A(\zeta) \colonequals \{ \boldsymbol{\epsilon} \}$ otherwise.
\end{proof}

\subsubsection{Game semantics of Id-types}
Let us proceed to present our game semantics of \emph{Id-types}.
Again, we first review the general semantic type former for Id-types in an arbitrary CwF:
\begin{definition}[CwFs with Id-types \cite{hofmann1997syntax}]
A CwF $\mathcal{C}$ \emph{\bfseries supports Id-types} if
\begin{itemize}

\item \textsc{(Id-Form)} Given $\Gamma \in \mathcal{C}$ and $A \in \mathrm{Ty}(\Gamma)$, there is a type $\mathrm{Id}_A \in \mathrm{Ty}(\Gamma . A . A^+)$, where $A^+ \colonequals A\{\mathrm{p}_A \} \in \mathrm{Ty}(\Gamma . A)$;

\item \textsc{(Id-Intro)} There is a morphism $\mathrm{Refl}_A : \Gamma . A \rightarrow \Gamma . A . A^+ . \mathrm{Id}_A$ in $\mathcal{C}$ that satisfies $\mathrm{p}_{\mathrm{Id}_A} \circ \mathrm{Refl}_A = \overline{\mathrm{v}_A} : \Gamma . A \rightarrow \Gamma . A . A^+$, where $\overline{\mathrm{v}_A} \colonequals \langle \mathrm{id}_{\Gamma . A}, \mathrm{v}_{A} \rangle$;

\item \textsc{(Id-Elim)} Given $B \in \mathrm{Ty}(\Gamma . A . A^+ . \mathrm{Id}_A)$ and $\beta \in \mathrm{Tm}(\Gamma . A, B\{\mathrm{Refl}_A\})$, there is a term $\mathcal{R}^{\mathrm{Id}}_{A,B}(\beta) \in \mathrm{Tm}(\Gamma . A . A^+ . \mathrm{Id}_A, B)$;

\item \textsc{(Id-Comp)} $\mathcal{R}^{\mathrm{Id}}_{A,B}(\beta)\{\mathrm{Refl}_A\} = \beta$;

\item \textsc{(Id-Subst)} $\mathrm{Id}_A\{\phi_{A, A^+}^{++}\} = \mathrm{Id}_{A\{\phi\}} \in \mathrm{Ty}(\Delta . A\{\phi\} . A\{\phi\}^+)$ for all $\Delta \in \mathcal{C}$ and $\phi : \Delta \to \Gamma$ in $\mathcal{C}$, where $\phi_A^+ \colonequals \langle \phi \circ \mathrm{p}_{A\{ \phi \}}, \mathrm{v}_{A\{\phi\}} \rangle_A : \Delta . A\{\phi\} \to \Gamma . A$ and $\phi_{A, A^+}^{++} \colonequals (\phi_A^+)^+_{A^+} : \Delta . A\{\phi\} . A^+\{\phi_A^+\} \to \Gamma . A . A^+$;

\item \textsc{(Refl-Subst)} $\mathrm{Refl}_A \circ \phi_A^+ = \phi_{A, A^+, \mathrm{Id}_A}^{+++} \circ \mathrm{Refl}_{A\{\phi\}} : \Delta . A\{\phi\} \to \Gamma . A . A^+ . \mathrm{Id}_A$, where $\phi_{A, A^+, \mathrm{Id}_A}^{+++} \colonequals (\phi_{A, A^+}^{++})^+_{\mathrm{Id}_A} : \Delta . A\{\phi\} . A^+\{\phi^+\} . \mathrm{Id}_{A\{\phi\}} \to \Gamma . A . A^+ . \mathrm{Id}_A$;

\item \textsc{($\mathcal{R}^{\mathrm{Id}}$-Subst)} $\mathcal{R}^{\mathrm{Id}}_{A,B}(\beta)\{\phi_{A, A^+, \mathrm{Id}_A}^{+++}\} = \mathcal{R}^{\mathrm{Id}}_{A\{\phi\}, B\{\phi_{A, A^+, \mathrm{Id}_A}^{+++}\}}(\beta\{\phi_A^+\})$.

\end{itemize}
\end{definition}

Then, we present our game semantics of Id-types, which is essentially the same as the interpretation of Id-types by Abramsky et al. \cite{abramsky2015games,vakar2018game}:
\begin{theorem}[Game semantics of Id-types]
\label{LemGameSemanticIdTypes}
$\mathbb{WPG}$ supports Id-types.
\end{theorem}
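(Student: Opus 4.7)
The plan is to equip $\mathcal{WPG}$ with the four Id-type components by exploiting the coarseness of the Id p-game construction of Def.~\ref{DefIdSpaces}, in which $\mathsf{Id}_G(g, g')$ is $\boldsymbol{1}$ if $g = g'$ and $\boldsymbol{0}$ otherwise. This collapses proofs of equality to the unique trivial $\star$-strategy and forces uniqueness of identity witnesses. For \textsc{Id-Form} I would take $\mathsf{Id}_A \in \mathcal{DWPG}(\Sigma(\Sigma(\Gamma, A), A^+))$ to be the dependent Id p-game of Def.~\ref{DefFullPiSigmaId}; well-definedness (non-empty hom, well-founded enabling) is immediate since each fibre is $\boldsymbol{1}$ or $\boldsymbol{0}$. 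For \textsc{Id-Intro} I would set $\mathit{Refl}_A \stackrel{\mathrm{df.}}{=} \langle \langle \mathit{id}_{\Sigma(\Gamma, A)}, \mathit{v}_A \rangle_{A^+}, \star \rangle_{\mathsf{Id}_A}$, where the last component is well-typed because the chosen substitution lands on the reflexive diagonal, whose Id p-game fibre is $\boldsymbol{1}$.

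The pivotal observation driving \textsc{Id-Elim} is that every well-bracketed winning p-strategy $p \in \mathcal{WPG}(\Sigma(\Sigma(\Sigma(\Gamma, A), A^+), \mathsf{Id}_A))$ factors uniquely as $p = \mathit{Refl}_A \bullet p_0$ for some $p_0 = \langle g, a \rangle \in \mathcal{WPG}(\Sigma(\Gamma, A))$: totality of $p$ forces the Id-component to be a winning p-strategy on the Id p-game, which exists only when the two $A$-components coincide and is then uniquely $\star$. Given $b \in \mathit{Tm}(\Sigma(\Gamma, A), B\{\mathit{Refl}_A\})$, I would pick a valid t-skeleton $\beta \propto b$ via Lem.~\ref{LemFirstT-Skeleton_P-StrategyLemma}, and build a t-skeleton $\mathcal{R}(\beta)$ on $\Pi(\Sigma(\Sigma(\Sigma(\Gamma, A), A^+), \mathsf{Id}_A), B)$, presented as an FoDPLI, whose DPLI at each input t-skeleton $\langle\langle\langle \gamma, \alpha\rangle, \alpha'\rangle, \iota\rangle$ projects to $\langle \gamma, \alpha \rangle$, feeds it to $\beta$, and copies the result into the codomain. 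Setting $\mathcal{R}^{\mathsf{Id}}_{A,B}(b) \stackrel{\mathrm{df.}}{=} \mathsf{Sat}(\mathcal{R}(\beta))$ via Lem.~\ref{LemThirdT-Skeleton_P-StrategyLemma} yields the required term, independent of $\beta$ by horizontal completeness.

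The remaining rules follow by direct calculation. \textsc{Id-Subst} holds fibrewise since the $\boldsymbol{1}$/$\boldsymbol{0}$ dichotomy is preserved by reindexing. \textsc{Refl-Subst} is immediate from the definition of $\mathit{Refl}_A$ out of derelictions and the canonical $\star$, both of which commute with substitution. \textsc{Id-Comp}, $\mathcal{R}^{\mathsf{Id}}_{A,B}(b)\{\mathit{Refl}_A\} = b$, follows because post-composing with $\mathit{Refl}_A$ collapses the project-and-evaluate construction back to $b$. \textsc{$\mathcal{R}^{\mathsf{Id}}$-Subst} then follows from naturality of this construction, using the standard Yoneda-style manipulations already deployed for $\Pi$ and $\Sigma$.

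The main obstacle, I expect, is showing that $\mathcal{R}(\beta)$ genuinely defines an FoDPLI: for each t-skeleton $\psi$ on the context, the induced PLI must land in $B(\mathsf{Sat}(\psi))$ whenever $\mathsf{Sat}(\psi) \in \mathcal{WPG}(\Sigma(\Sigma(\Sigma(\Gamma, A), A^+), \mathsf{Id}_A))$, and the non-winning cases (in particular, t-skeletons with $\alpha \neq \alpha'$ or a non-$\star$ Id-component) must not violate determinacy of the union, nor downward/horizontal completeness. I would resolve this by exploiting that the dependency condition in Def.~\ref{DefUoDPLIs} only constrains PLIs at t-skeletons whose saturations are winning p-strategies (all of which factor through $\mathit{Refl}_A$, so land where $b$ gives a well-typed answer), and choosing a minimal canonical extension on non-reflexive inputs that preserves determinacy. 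Verifying well-bracketing, innocence, noetherianity and totality of the saturation then reduces to the analogous properties of $\beta$ via Lem.~\ref{LemWellDefinedConstructionsOnP-Strategies}, and validity follows from Lem.~\ref{LemValidityLemma}.
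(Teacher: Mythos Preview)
Your approach is correct in substance and lands on the same p-strategy as the paper, but you take a considerably more laborious route and thereby manufacture an obstacle that the paper's argument avoids entirely.

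The paper's key observation is that your ``project to $\langle \gamma, \alpha\rangle$'' is itself already a morphism $\mathit{Refl}_A^{-1} : \Sigma(\Sigma(\Sigma(\Gamma, A), A^+), \mathsf{Id}_A) \to \Sigma(\Gamma, A)$ in $\mathcal{WPG}$, namely the double first projection $\mathit{p}(A^+) \bullet \mathit{p}(\mathsf{Id}_A)$ (the dereliction onto the first $\Sigma(\Gamma, A)$ component), satisfying $\mathit{Refl}_A^{-1} \bullet \mathit{Refl}_A = \mathit{pder}_{\Sigma(\Gamma, A)}$. The paper then simply \emph{defines} $\mathcal{R}^{\mathsf{Id}}_{A,B}(b) \stackrel{\mathrm{df.}}{=} b \bullet \mathit{Refl}_A^{-1}$ as a composition in $\mathcal{WPG}$. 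Everything your hand-built FoDPLI would compute is already packaged into this composite: Lem.~\ref{LemWellDefinedConstructionsOnP-Strategies} guarantees it is a well-bracketed winning p-strategy, and the check that it lands in $\Pi(\ldots, B)$ rather than merely $\Pi(\ldots, \int B)$ reduces (as you yourself note) to the fact that every winning input factors through $\mathit{Refl}_A$. With this definition, \textsc{Id-Comp} is the one-line calculation $(b \bullet \mathit{Refl}_A^{-1}) \bullet \mathit{Refl}_A = b$, and \textsc{$\mathcal{R}^{\mathsf{Id}}$-Subst} is $b \bullet \mathit{Refl}_A^{-1} \bullet f^{+++} = b \bullet f^+ \bullet \mathit{Refl}_{A\{f\}}^{-1}$, which follows from the CwF equations for iterated projections without any appeal to invertibility.

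What this buys is precisely the evaporation of your ``main obstacle'': there is no need to choose a t-skeleton $\beta \propto b$, construct an FoDPLI by hand, verify its determinacy on non-reflexive inputs, or invoke Lem.~\ref{LemValidityLemma}. All of that is absorbed into the already-established closure of $\mathcal{WPG}$ under composition. Your route would work, but it re-derives at the t-skeleton level what composition already provides at the p-strategy level.
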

\begin{proof}
Let $\Gamma \in \mathbb{WPG}$, $A \in \mathscr{D}^{\mathrm{w}}(\Gamma)$ and $B \in \mathscr{D}^{\mathrm{w}}(\Sigma(\Sigma(\Sigma(\Gamma, A), A^+), \mathrm{Id}_A))$.
\begin{itemize}

\item \textsc{(Id-Form)} Let $\boldsymbol{1}$ be the flat game $\mathrm{flat}(\{ \mathbin{\surd} \})$ (Example~\ref{ExamplesOfGames}), where $\{ \mathbin{\surd} \}$ is an arbitrarily fixed singleton set.
Define $\mathrm{Id}_A \in \mathscr{D}^{\mathrm{w}}(\Sigma(\Sigma(\Gamma, A), A^+))$ by $|\mathrm{Id}_A| \colonequals \boldsymbol{1}$ and $\mathrm{Id}_A(\langle \langle \gamma_0, \alpha_0 \rangle, \alpha_0' \rangle^\dagger) \colonequals \begin{cases} (\boldsymbol{1}, \kappa_{\boldsymbol{1}}) &\text{if $\alpha_0 = \alpha_0'$;} \\ (\boldsymbol{1}, \kappa_{\boldsymbol{0}}) &\text{otherwise,} \end{cases}$ for all $\langle \langle \gamma_0, \alpha_0 \rangle, \alpha_0' \rangle^\dagger \in \mathbb{WPG}(\oc \Sigma(\Sigma(\Gamma, A), A^+))$, where $\kappa_{X}$ is the constant family at $X$ ($X = \boldsymbol{1}, \boldsymbol{0}$);

\item \textsc{(Id-Intro)} Define $\mathrm{Refl}_A \colonequals \langle \overline{\mathrm{v}_A}, \mathrm{refl}_A \rangle \in \mathbb{WPG}(\Sigma(\Gamma, A), \Sigma(\Sigma(\Sigma(\Gamma, A), A^+), \mathrm{Id}_A))$, where $\mathrm{refl}_A \in \mathbb{WPG}(\Sigma(\Gamma, A), \mathrm{Id}_A\{ \overline{\mathrm{v}_A} \})$ is $\underline{\surd} : \boldsymbol{1}$ (Example~\ref{ExamplesOfGames}) up to `tags.'
Note that $\mathrm{refl}_A$ is well-defined since its codomain is always the game $\boldsymbol{1}$.
\if0
to be the strategy that plays as the dereliction between $\Sigma(\Gamma, A_{[0]})$ and $\Sigma(\Gamma, A_{[1]})$, between $|A_{[0]}|$ and $|A_{[2]}^+|$, or the trivial one $\Sigma(\Gamma, A_{[0]}) \rightarrow T$. 
There is the inverse $\mathrm{Refl}_A^{-1} \in \mathbb{WPG}(\Sigma(\Sigma(\Sigma(\Gamma, A_{[1]}), A_{[2]}^+), \mathrm{Id}_A), \Sigma(\Gamma, A_{[0]}))$, which is the dereliction $\Sigma(\Gamma, A_{[1]})$ and $\Sigma(\Gamma, A_{[0]})$.
\fi

\item \textsc{(Id-Elim)} Given $\beta \in \mathbb{WPG}(\Sigma(\Gamma, A), B\{\mathrm{Refl}_A\})$, let us construct the strategy $\mathcal{R}^{\mathrm{Id}}_{A,B}(\beta) \in \mathbb{WPG}(\Sigma(\Sigma(\Sigma(\Gamma, A_{[0]}), A_{[1]}^+), \mathrm{Id}_A), B)$ such that given any initial move in $B$ it makes the move $q$ in $\mathrm{Id}_A$, and if O plays there by $\surd$, then it plays as $\beta$ between $\Sigma(\Gamma, A_{[0]})$ and $B$ in the rest of the play.
Since there is the evident bijection $\mathbb{WPG}(\Sigma(\Gamma, A)) \cong \mathbb{WPG}(\Sigma(\Sigma(\Sigma(\Gamma, A), A^+), \mathrm{Id}_A))$ thanks to the definition of $\mathrm{Id}_A$, $\mathcal{R}^{\mathrm{Id}}_{A,B}(\beta)$ can play as $\beta$ after the initial two moves.

\end{itemize}

Having gone through the detailed proofs of Theorems~\ref{ThmGameSemanticsOfPiType} and \ref{ThmGameSemanticsOfSigmaType}, it is a routine to verify Id-Comp, Id-Subst and Refl-Subst. 
Also, essentially the same interpretation of Id-types is given by Abramsky et al. \cite{abramsky2015games,vakar2018game}.
Hence, we leave it to the reader. 
\end{proof}

\if0
\subsubsection{Game semantics of universes}
\label{SubsectionGameSemanticsOfUniverses}
Finally, we interpret \emph{universes} by \emph{universe p-games} (Definition~\ref{DefUniversePredicateGames}).\footnote{Because the present section is not strictly necessary for the rest of this article, the reader who is not interested in universes can safely jump to the next section.}
We first recall the general semantic type former for universes in an arbitrary CwF:
\begin{definition}[CwFs with universes \cite{hofmann1997syntax}]
A CwF $\mathcal{C}$ \emph{\bfseries supports a universe} if
\begin{itemize}

\item \textsc{(U-Form)} Given $\Gamma \in \mathcal{C}$, there is a type $\mathcal{U}^{[\Gamma]} \in \mathrm{Ty}(\Gamma)$ together with a function $\mathrm{El} : \mathrm{Tm}(\Gamma, \mathcal{U}) \rightarrow \mathrm{Ty}(\Gamma)$, where we often omit the superscript $(\_)^{[\Gamma]}$ on $\mathcal{U}$;

\item \textsc{(U-Intro, Elim and Comp)} The type $\mathcal{U}$ is closed under One-, Zero-, N-, Pi-, Sigma- and Id-types in the sense that
\begin{enumerate}

\item There is a term $\underline{N}^{[\Gamma]} \in \mathrm{Tm}(\Gamma, \mathcal{U})$ with $\mathrm{El}(\underline{N}^{[\Gamma]}) = N^{[\Gamma]}$ and $\mathrm{El}(\underline{N}^{[\Gamma]})\{ \phi \} = \mathrm{El}(\underline{N}^{[\Delta]})$, where we write $\underline{N}$ for $N^{[\Gamma]}$, and also on One- and Zero-types;

\item There is a term $\underline{\Pi}(\alpha, \beta) \in \mathrm{Tm}(\Gamma, \mathcal{U})$ with $\mathrm{El}(\underline{\Pi}(\alpha, \beta)) = \Pi(\mathrm{El}(\alpha), \mathrm{El}(\beta))$ and $\mathrm{El}(\underline{\Pi}(\alpha, \beta))\{ \phi \} = \mathrm{El}(\underline{\Pi}(\alpha\{ \phi \}, \beta \{ \phi_A^+ \}))$, and also on Sigma-types;

\item There is a term $\underline{\mathrm{Id}}_\alpha \in \mathrm{Tm}(\Gamma . \mathrm{El}(\alpha) . \mathrm{El}(\alpha)^+, \mathcal{U})$ with $\mathrm{El}(\underline{\mathrm{Id}}_\alpha) = \mathrm{Id}_{\mathrm{El}(\alpha)}$ and $\mathrm{El}(\underline{\mathrm{Id}}_\alpha)\{ \phi^{++}_{A, A^+} \} = \mathrm{El}(\underline{\mathrm{Id}}_{\alpha\{ \phi \}})$,

\end{enumerate}
for all $\Delta, \Gamma \in \mathcal{C}$, $\phi \in \mathcal{C}(\Delta, \Gamma)$ and $\beta \in \mathrm{Tm}(\Gamma . \mathrm{El}(\alpha), \mathcal{U})$;

\if0
\item \textsc{(U-Intro)} Given $A \in \mathrm{Ty}(\Gamma)$, there is a term $\mathrm{En}(A) \in \mathrm{Tm}(\Gamma, \mathcal{U}_k)$ for some $k \in \mathbb{N}$, and in particular $\mathrm{En}(\mathcal{U}_k) \in \mathrm{Tm}(\Gamma, \mathcal{U}_{k+1})$ for all $k \in \mathbb{N}$;

\item \textsc{(U-Elim)} Each term $\mu \in \mathrm{Tm}(\Gamma, \mathcal{U})$ induces a type $\mathrm{El}(\mu) \in \mathrm{Ty}(\Gamma)$;

\item \textsc{(U-Comp)} $\mathrm{El}(\mathrm{En}(A)) = A$ for all $A \in \mathrm{Ty}(\Gamma)$;

\item \textsc{(U-Cumul)} If $\mu \in \mathrm{Tm}(\Gamma, \mathcal{U}_k)$, then $\mu \in \mathrm{Tm}(\Gamma, \mathcal{U}_{k+1})$;
\fi

\item \textsc{(U-Subst)} Given $\phi \in \mathcal{C}(\Delta, \Gamma)$, $\mathcal{U}^{[\Gamma]}\{\phi\} = \mathcal{U}^{[\Delta]} \in \mathrm{Ty}(\Delta)$.

\end{itemize}

\end{definition}

The main challenge in achieving game semantics of universes is to interpret the introduction rule U-Intro since it is unclear how to encode an \emph{arbitrary} dependent p-game by a morphism in $\mathbb{WPG}$.
Our solution for this problem is to restrict p-games and dependent p-games in $\mathbb{WPG}$ to those \emph{definable} in MLTT by induction, and define their encodings along the induction.
Besides, to avoid a set-theoretic paradox on universe p-games, we also define them simultaneously along the induction so that they are \emph{minimal} in the sense that their strategies are only those for the encodings.

Naively, we can encode the base case of the induction, i.e., the constant dependent p-games $\{ \Delta \}_{\oc T}$ ($\Delta \in \{ T, \boldsymbol{0}, N \}$), by the strategies $\mathrm{Pref}(\{ q . \sharp(\Delta) \})^{\mathrm{Even}}$, where $\sharp$ is an arbitrarily fixed assignment of natural numbers to $\Delta$.
On this base case, we simultaneously define that each universe p-game $\mathcal{U}_k$ ($k \in \mathbb{N}$) contains these strategies. 

However, this naive method does not work for the inductive step on Id-types.
In fact, it is in general \emph{impossible} to encode strategies $\alpha, \alpha' \in \mathbb{WPG}(\Gamma, A)$ by natural numbers. 
For instance, if $A = \{ N \Rightarrow N \}_{\oc T}$, then we cannot encode the Id-type $\mathrm{Id}_A$ by a morphism $\Sigma(\Sigma(T, A), A^+) \rightarrow \mathcal{U}_k$ in $\mathbb{WPG}$ since no strategy can completely see a given input $\langle \langle \sigma, \alpha \rangle, \alpha' \rangle^\dagger \in \mathbb{WPG}(\Sigma(\Sigma(T, A), A^+))$ on the domain, let alone how to encode the p-game $\mathrm{Id}_{\{N \Rightarrow N\}_{\oc T}}(\langle \langle \sigma, \alpha \rangle, \alpha' \rangle^\dagger)$ as a strategy on the codomain $\mathcal{U}_k$.

We overcome this problem by the following technique: To encode $\mathrm{Id}_A$ by $\underline{\mathrm{Id}_A} \colonequals \mathrm{Pref}(\{ \, q . \sharp(\mathrm{Id}) . \boldsymbol{s} \mid \boldsymbol{s} \in \langle \underline{A}, \mathrm{der}_{|\Sigma(\Sigma(T, A), A^+)|} \rangle \, \})^{\mathrm{Even}} : \Sigma(\Sigma(T, A), A^+) \rightarrow \mathcal{U}_k$, where we extend the assignment $\sharp$ to $\mathrm{Id}$ and add necessary strategies to $\mathcal{U}_k$. 
I.e., the strategy $\underline{\mathrm{Id}_A}$ responds to the initial question $q$ by $\sharp(\mathrm{Id})$ (`It is an Id-type!') and then plays by the pairing $\langle \underline{A}, \mathrm{id}_{\Sigma(\Sigma(T, A), A^+)} \rangle$ that encodes the type $A$ and \emph{copy-cats the input strategy themselves} on the domain given by O.
In this way, we can avoid a natural number encoding of the input strategies, while we can faithfully encode $\mathrm{Id}_A$ by a morphism in $\mathbb{WPG}$.
The other inductive steps are analogous and simpler:

\begin{definition}[Universe predicate game]
\label{DefUniversePredicateGames}
The \emph{\bfseries universe predicate (p-) game} is the p-game $\mathscr{P}(\mathcal{U})$ induced by the game $\mathcal{U}$ defined inductively by
\begin{align*}
\mathcal{U} & \colonequals \mathrm{Pref}(\{ \, q^{\mathsf{OQ}} . \sharp(\Theta)^{\mathsf{PA}} \mid \Theta \in \{ T, \boldsymbol{0}, N \} \, \}) \\
&\cup \mathrm{Pref}(\{ \, q^{\mathsf{OQ}} . \sharp(X)^{\mathsf{PA}} . \boldsymbol{s} \mid X \in \{ \Pi, \Sigma \}, \exists \Gamma \in \mathbb{WPG} . \, \boldsymbol{s} \in \mathcal{U} \mathbin{\&} (\Gamma \Rightarrow \mathcal{U}) \, \}) \\
&\cup \mathrm{Pref}(\{ \, q^{\mathsf{OQ}} . \sharp(\mathrm{Id})^{\mathsf{PA}} . \boldsymbol{t} \mid \exists \Gamma \in \mathbb{WPG} . \, \boldsymbol{t} \in \mathcal{U} \mathbin{\&} \Gamma \, \}),
\end{align*}
where $\sharp$ is any injection $\sharp : \{ T, \boldsymbol{0}, N, \Pi, \Sigma, \mathrm{Id} \}  \rightarrowtail \mathbb{N}$.
We abbreviate $\mathscr{P}(\mathcal{U})$ as $\mathcal{U}$.

The partial map $\mathrm{El}_\Gamma : \mathbb{WPG}(\mathcal{U}) \rightarrow \mathscr{D}^{\mathrm{w}}(\Gamma)$ is inductively defined by
\begin{mathpar}
\mathrm{El}_\Gamma(\mathrm{Pref}(\{ q . \sharp(\Theta) \})^{\mathrm{Even}}) \colonequals \{ \Theta \}_{\oc \Gamma}
\and
\mathrm{El}_\Gamma(q . \sharp(X) . \langle \sigma, \tau \rangle) \colonequals X(\mathrm{El}_\Gamma(\sigma), \mathrm{El}_{\Sigma(\Gamma, \mathrm{El}(\sigma))}(\lambda^{-1}(\tau)))
\and
\mathrm{El}_\Gamma(q . \sharp(\mathrm{Id}) . \langle \sigma, \mathrm{der}_{|\Sigma(\Sigma(\Gamma, A), A^+)|} \rangle) \colonequals \mathrm{Id}_{\mathrm{El}_\Gamma(\sigma)}
\end{mathpar}
\end{definition}

\begin{theorem}[Game semantics of universes]
\label{ThmGameSemanticsOfUniverses}
The CwF $\mathbb{UPG}$ supports universes. 
\end{theorem}
\begin{proof}
Let $\Delta, \Gamma \in \mathbb{WPG}$, $\phi \in \mathbb{WPG}(\Delta, \Gamma)$ and $A \in \mathscr{D}^{\mathrm{w}}(\Gamma)$.

\begin{itemize}

\item \textsc{(U-Form)} Define $\mathcal{U}^{[\Gamma]} \colonequals \{ \mathcal{U} \}_{\oc\Gamma}$ (Definition~\ref{DefUniversePredicateGames}) and $\mathrm{El}(\alpha)(\gamma_0^\dagger) \colonequals \alpha \bullet \gamma_0$;

\item \textsc{(U-Intro, Elim and Comp)} 

\item \textsc{(U-Subst)} As in the case of N-type;

\item \textsc{($\mathrm{En}$-Subst)} $\mathrm{En}(A)\{\phi\} = \underline{A}\{ \phi \} = \underline{A\{ \phi \}} = \mathrm{En}(A \{ \phi \})$,

\end{itemize}
which complete the proof. 
\end{proof}

Finally, it is straightforward to see that the CwF $\mathbb{UPG}$ supports the other semantic type formers considered so far in the same way the CwF $\mathbb{WPG}$ does:
\begin{corollary}[Game semantics of MLTT]
$\mathbb{UPG}$ strictly supports One-, Pi- and Sigma-types, and supports Zero-, N- and Id-types as well as universes. 
\end{corollary}
\fi

\subsection{Intensionality}
\label{Intensionality}
We next show the \emph{intensionality} of our game semantics.
In this section, we focus on the components of $\mathbb{WPG}$ \emph{definable} by MLTT with One-, Zero-, N-, Pi-, Sigma- and Id-types (see \cite{hofmann1997syntax} for the interpretation) and formulate type-theoretic principles in $\mathbb{WPG}$.
In the following, let us also fix an arbitrary CwF $\mathcal{C}$ that supports One-, Zero-, N-, Pi-, Sigma- and Id-types, and assume $\Gamma \in \mathcal{C}$, $A \in \mathrm{Ty}(\Gamma)$ and $B \in \mathrm{Ty}(\Gamma . A)$.

\paragraph{Equality reflection.}
\emph{Equality reflection} \cite{dybjer2016intuitionistic} states that propositionally equal terms are judgmentally equal: $\alpha = \alpha' \in \mathrm{Tm}(\Gamma, A)$ if $\mathrm{Tm}(\Gamma, \mathrm{Id}_A\{\langle \langle \mathrm{id}, \alpha \rangle, \alpha' \rangle\})$ is inhabited.

Equality reflection \emph{fails} in our game semantics: $N \mathbin{\&} \boldsymbol{0} \stackrel{\pi_1}{\rightarrow} N$ and $N \mathbin{\&} \boldsymbol{0} \stackrel{\mathrm{succ}}{\rightarrow} N \mathbin{\&} \boldsymbol{0} \stackrel{\pi_1}{\rightarrow} N$ are different definable terms in $\mathbb{WPG}(\Sigma(N, \{\boldsymbol{0}\}), \{ N \})$, but the elimination rule on Zero-type gives a definable one $\rho \in \mathbb{WPG}(\Sigma(N, \{\boldsymbol{0}\}), \mathrm{Id}_N\{ \langle \langle \mathrm{id}, \pi_1 \bullet \mathrm{succ} \rangle, \pi_1 \rangle \})$.

Recall that $\rho$ is the strategy that makes, after the initial move $q$ on the codomain is made by O, the move $q$ in $\boldsymbol{0}$ on the domain (see the proof of Theorem~\ref{ThmGameSemanticsOfOneAndZeroTypes}).


\paragraph{Function extensionality.} 
\emph{Function extensionality} \cite{hottbook} postulates that if terms of a Pi-type are propositionally equal as a function, then they are propositionally equal, i.e., the inhabitance of the type $\Pi(A, \mathrm{Id}_B\{ \langle \langle \mathrm{id}, \mathrm{App}(\psi\{ \mathrm{p} \}, \mathrm{v}) \rangle, \mathrm{App}(\psi'\{ \mathrm{p} \}, \mathrm{v}) \rangle \}) \Rightarrow \mathrm{Id}_{\Pi(A, B)}\{ \langle \langle \mathrm{id}, \psi \rangle, \psi' \rangle \} \in \mathrm{Ty}(\Gamma)$ for all terms $\psi, \psi' \in \mathrm{Tm}(\Gamma, \Pi(A, B))$.

Our game semantics \emph{refutes} function extensionality since terms $\psi \in \mathbb{WPG}(\Gamma, A)$ are in general not completely specified by their extensions $\gamma : \oc \Gamma \mapsto \phi \circ \gamma$. 
To see this point closely, consider, e.g., the terms $\underline{0}, \underline{0}' \in \mathbb{WPG}(T, \Pi(\{ N_{[0]} \}, \{ N_{[1]} \}))$ defined by $\underline{0} \colonequals \mathrm{Pref}(\{ \, q_{[1]}q_{[0]}n_{[0]}0_{[1]} \mid n \in \mathbb{N} \, \})^{\mathrm{Even}}$ and $\underline{0}' \colonequals \mathrm{Pref}(\{ q_{[1]}0_{[1]} \})^{\mathrm{Even}}$.
They are definable in MLTT by the introduction and the elimination rules of N-type.
Then, for the reason mentioned above, the function extensionality on these terms fails. 

\if0
\subsubsection{Uniqueness of identity proofs}
Let us proceed to analyse \emph{uniqueness of identity proofs (UIP)} \cite{hofmann1998groupoid}, which states that there is a proof of the following type: $\mathsf{\Gamma \vdash \Pi_{a_1, a_2 : A} \Pi_{p, q : Id_A(a_1, a_2)} Id_{Id_A(a_1, a_2)}(p, q) \ type}$.

Our game semantics validates UIP as follows.
If the context $\mathsf{\Gamma}$ contains Zero-type, then the validation is trivial just as in the case of EqRefl.
Hence, assume that $\mathsf{\Gamma}$ does not contain Zero-type.
Then, by our interpretation of Id-types as either $\boldsymbol{1}$ or $\boldsymbol{0}$, our game semantics validates UIP in this case too.
\fi

\paragraph{Criteria of intensionality.}
There are Streicher's \emph{Criteria of Intensionality} \cite{streicher1993investigations}:
\begin{enumerate}

\item $\mathrm{v}\{ \mathrm{p} \circ \mathrm{p} \} \neq \mathrm{v}\{ \mathrm{p} \} \in \mathrm{Tm}(\Gamma . A . A^+ . \mathrm{Id}_A, A\{ \mathrm{p} \circ \mathrm{p} \circ \mathrm{p} \})$;

\item $B\{ \langle \mathrm{p} \circ \mathrm{p} \circ \mathrm{p}, \mathrm{v}\{ \mathrm{p} \circ \mathrm{p} \rangle \} \} \neq B\{ \langle \mathrm{p} \circ \mathrm{p} \circ \mathrm{p}, \mathrm{v}\{ \mathrm{p} \} \rangle \} \in \mathrm{Tm}(\Gamma . A . A^+ . \mathrm{Id}_A)$;

\item If $\mathrm{Tm}(T, \mathrm{Id}_S\{ \langle \langle \mathrm{id}_T, \sigma \rangle, \sigma' \rangle \})$ is inhabited, then $\sigma = \sigma' \in \mathrm{Tm}(T, S)$,
\end{enumerate} 
for all $\Gamma \in \mathcal{C}$, $A \in \mathrm{Ty}(\Gamma)$, $B \in \mathrm{Ty}(\Gamma . A)$, $S \in \mathrm{Ty}(T)$ and $\sigma, \sigma' \in \mathrm{Tm}(T, S)$.

Our game semantics \emph{satisfies} the first criterion.
For example, let $\Gamma \colonequals T$, $A \colonequals \{ N \}_{\oc T}$ and $B \colonequals \{ N \}_{\oc \Sigma(T, \{ N \})}$.
Then, $\mathrm{v}\{ \mathrm{p} \circ \mathrm{p} \}$ is $T \mathbin{\&} N \mathbin{\&} N \mathbin{\&} \mathrm{Id}_{\{ N \}} \stackrel{\pi_1}{\rightarrow} T \mathbin{\&} N \mathbin{\&} N  \stackrel{\pi_1}{\rightarrow} T \mathbin{\&} N \stackrel{\pi_2}{\rightarrow} N$, and $\mathrm{v}\{ \mathrm{p} \}$ is $T \mathbin{\&} N \mathbin{\&} N \mathbin{\&} \mathrm{Id}_{\{ N \}} \stackrel{\pi_1}{\rightarrow} T \mathbin{\&} N \mathbin{\&} N \stackrel{\pi_2}{\rightarrow} N$. 
They are distinct.

In contrast, our game semantics does \emph{not} satisfy the second criterion as dependent p-games are indexed by \emph{winning} strategies: Any $\tau \in \mathbb{WPG}(\oc \Sigma(\Sigma(\Sigma(\Gamma, A), A^+), \mathrm{Id}_A))$ must be of the form $\tau = \langle \langle \langle \gamma_0, \alpha_0 \rangle, \alpha_0' \rangle, \underline{\surd} \rangle^\dagger$ such that $\alpha_0 = \alpha_0'$.
In other words, our game semantics of dependent types is \emph{extensional} as opposed to that of terms.

Finally, our model \emph{satisfies} the third criterion: Any $\rho \in \mathbb{WPG}(T, \mathrm{Id}_S\{ \langle \langle \mathrm{id}_T, \sigma \rangle, \sigma' \rangle \})$ must be the strategy $\underline{\surd}$ (up to `tags') since it is winning, and hence $\sigma = \sigma'$.


\subsection{Independence of Markov's principle}
\label{Independence}
\emph{Markov's principle (MP)} \cite{markov1962constructive} is a subtle principle in constructive mathematics and computability theory.
In fact, it depends on the school of constructive mathematics whether MP should be regarded as \emph{constructive}.
Roughly, MP postulates that if it is impossible that there is no natural number $n \in \mathbb{N}$ such that $f(n) = 0$ for a given function $f : \mathbb{N} \rightarrow \mathbb{N}$, then there is a natural number $n' \in \mathbb{N}$ such that $f(n') = 0$.

We can formulate MP in MLTT as the type
\begin{equation}
\label{MP}
\mathsf{f : N \Rightarrow N \vdash \neg \neg \Sigma_{x:N}Id_N(f(x), \underline{0}) \Rightarrow \Sigma_{y:N}Id_N(f(y), \underline{0}) \ type},
\end{equation}
where $\mathsf{\neg A}$ abbreviates $\mathsf{A \Rightarrow \boldsymbol{0}}$ for any type $\mathsf{A}$, and $\mathsf{0}$, $\mathsf{N}$, $\mathsf{\Rightarrow}$, $\mathsf{\Pi}$, $\mathsf{\Sigma}$ and $\mathsf{Id_N}$ are Zero-, N-, implication, Pi-, Sigma- and Id-types, respectively.
\if0
and its interpretation in a CwF $\mathcal{C}$ (with Pi-, Sigma-, N-, and Zero-types) is
\begin{scriptsize}
\begin{equation}
\label{MP}
\neg \neg \Sigma(N, \mathrm{Id}_{N}\{ \langle \langle \top, \mathrm{App}(\mathrm{v}\{\mathrm{p}\}, \mathrm{v}) \rangle, \underline{0} \rangle \}) \Rightarrow \Sigma(N, \mathrm{Id}_N\{ \langle \langle \top, \mathrm{App}(\mathrm{v}\{\mathrm{p}\}, \mathrm{v}) \rangle, \underline{0} \rangle \}) \in \mathrm{Ty}(T . (N \Rightarrow N)),
\end{equation}
\end{scriptsize}where $\top : T . (N \Rightarrow N) . N \dashrightarrow T$, $\neg A \colonequals A \Rightarrow \boldsymbol{0}$ for all $\Gamma \in \mathcal{C}$ and $A \in \mathrm{Ty}(\Gamma)$, and $\boldsymbol{0} \in \mathrm{Ty}(\Gamma . A)$ is Zero-type.
Note that $\mathrm{App}(\mathrm{v}\{\mathrm{p}\}, \mathrm{v}) \in \mathrm{Tm}(T . (N \Rightarrow N) . N, N)$ and $\mathrm{Id}_N \in \mathrm{Ty}(T . N . N^+)$, whence $\mathrm{Id}_N\{ \langle \langle \top, \mathrm{App}(\mathrm{v}\{\mathrm{p}\}, \mathrm{v}) \rangle, \underline{0} \rangle \} \in \mathrm{Ty}(T . (N \Rightarrow N) . N)$. 
\fi

\if0
For the \emph{double negation elimination} \cite[\S 2.4]{troelstra2000basic}, MP seems to be a \emph{classical} principle.
However, MP can be seen as \emph{constructive} too since if we have a constructive proof of the formula $\neg \forall x \in \mathbb{N} . \, f(x) \neq 0$ for a computable map $f : \mathbb{N} \rightarrow \mathbb{N}$, then we can execute algorithms for $f$ and the proof to check the values $f(0), f(1), \dots$ until we find $f(n) = 0$.
In fact, the \emph{effective topos} \cite{hyland1982effective}, a computational model of MLTT, validates MP.
Also, it is easy to see that the model by Blot and Laird \cite{blot2018extensional} validates MP.
This is hardly surprising as their model admits \emph{non-local controls} \cite[\S 8]{blot2018extensional}.
\fi


Mannaa and Coquand \cite{mannaa2017independence} have shown that MP is \emph{independent} from MLTT; i.e., there is no term of the type (\ref{MP}) in MLTT.
Their proof of this result is \emph{syntactic}.

However, even representative computational models of MLTT such as the \emph{effective topos} \cite{hyland1982effective} validate MP.
Besides, it is easy to see that the model by Blot and Laird \cite{blot2018extensional} validates MP too (though this is hardly surprising as their model admits classical reasonings \cite[\S 8]{blot2018extensional}).
In other words, there is a \emph{gap} between MLTT and these models.

In this context, we illustrate the \emph{precision} of our game semantics of MLTT by:
\begin{corollary}[Game semantics refutes Markov's principle]
\label{CorIndependence}
There is no game-semantic term on the interpretation of (\ref{MP}) in $\mathbb{WPG}$.
Hence, the game semantics of MLTT in $\mathbb{WPG}$ (\S\ref{GameSemanticCwF}--\ref{GameSemanticTypeFormers}) implies that MP is independent from MLTT.
\end{corollary}
\begin{proof}[Proof (sketch)]
Assume for a contradiction that there is a term on the interpretation of (\ref{MP}) in $\mathbb{WPG}$, which is the p-game
\begin{tiny}
\begin{equation}
\label{GameSemanticsOfMP}
\Pi\bigg(N_{[0]} \Rightarrow N_{[1]}, \Big(\big(\Sigma(N_{[2]}, \mathrm{Id}_N\{ \langle \mathrm{App}(\pi_1, \pi_2), \underline{0} \rangle \}_{[3]}) \Rightarrow \boldsymbol{0}_{[4]}\big) \Rightarrow \boldsymbol{0}_{[5]}\Big) \Rightarrow \Sigma(N_{[6]}, \mathrm{Id}_N\{ \langle \mathrm{App}(\pi_1, \pi_2), \underline{0} \rangle \}_{[7]})\{ \mathrm{p} \} \bigg),
\end{equation}
\end{tiny}where we omit the terminal p-game $T$ and the bracket $\{ \_ \}$ for constant dependent p-games; e.g., we write $N$ for $T . \{ N \}$.
We call $\Sigma(N_{[6]}, \mathrm{Id}_N\{ \langle \mathrm{App}(\pi_1, \pi_2), \underline{0} \rangle \}_{[7]})$ the \emph{codomain}, and $N_{[0]} \Rightarrow N_{[1]}$ and $\big(\Sigma(N_{[2]}, \mathrm{Id}_N\{ \langle \mathrm{App}(\pi_1, \pi_2), \underline{0} \rangle \}_{[3]}) \Rightarrow \boldsymbol{0}_{[4]}\big) \Rightarrow \boldsymbol{0}_{[5]}$ the outer and the inner \emph{domains} of (\ref{GameSemanticsOfMP}).
We write $\langle \phi, \psi \rangle$ for the assumed term.

Assume first that there are \emph{total} input strategies $\varphi^\dagger : \oc (N_{[0]} \Rightarrow N_{[1]})$ and $\langle \underline{n}, \underline{\surd} \rangle^\dagger : \oc \Sigma(N_{[2]}, \mathrm{Id}_N\{ \langle \mathrm{App}(\pi_1, \pi_2), \underline{0} \rangle \}_{[3]})$ on which $\phi$ eventually makes a P-move $n'_\varphi$ in $N_{[6]}$ when O begins a play in $N_{[6]}$.
If $\varphi \bullet \underline{n'_\varphi} \neq \underline{0}$ for \emph{some} of these $\varphi$ and $n$, and O plays by them, then $\mathrm{Id}_N\{ \langle \mathrm{App}(\pi_1, \pi_2), \underline{0} \rangle \}_{[7]}$ can be the empty p-game $\boldsymbol{0}$, and $\psi$ plays on the domains \emph{forever}, contradicting its \emph{noetherianity}.
If $\varphi \bullet \underline{n'_\varphi} = \underline{0}$ for \emph{all} $\varphi$ and $n$, then $\phi$ is strict since otherwise $n'_\varphi$ is the same even when O slightly changes $\varphi$ so that $\varphi \bullet \underline{n'_\varphi} \neq \underline{0}$, a contradiction; hence, $\phi$ is strict and answers the question in $N_{[6]}$ yet with no answer to the question in $\boldsymbol{0}_{[4]}$, contradicting the \emph{well-bracketing} of $\phi$.

Thus, given any total inputs $\varphi^\dagger$ and $\langle \underline{n}, \underline{\surd} \rangle^\dagger$, $\phi$ does not make a P-move in $N_{[6]}$.
Similarly to the case of $\psi$, however, this contradicts the \emph{noetherianity} of $\phi$.
\end{proof}


\if0
\subsection{Faithfulness and full completeness}
\label{FaithfulnessAndFullCompleteness}
\fi

\subsection{Game semantics of subtyping on dependent types}
\label{GameSemanticsOfSubtyping}
Finally, we show that p-games enable us to interpret \emph{subtyping on dependent types}. 
The subtyping relation between types is something like the subset relation between sets, and it is mainly to ensure that if $\alpha$ is a term of a type $A$, and $A$ is a subtype of a type $B$, then $\mathrm{coe}(\alpha)$ is a term of $B$ that computes in the same way as $\alpha$, where $\mathrm{coe}$ is an operation on terms, called \emph{coercion}.
Therefore, subtyping has a pragmatic importance since it ensures \emph{reusability} of terms in different types. 

Let us first formulate a subtyping relation between dependent types in CwFs:
\begin{definition}[CwFs with subtyping]
\label{DefCwFwWithSubtyping}
A CwF $\mathcal{C}$ that supports One-, Pi- and Sigma-types \emph{\bfseries supports subtyping} (on One-, Pi- and Sigma-types) if it is equipped with a partial order $\trianglelefteqslant_\Gamma$ on $\mathrm{Ty}(\Gamma)$ for each $\Gamma \in \mathcal{C}$ that satisfies
\begin{enumerate}

\item $\big(A \trianglelefteqslant_\Gamma A' \wedge \alpha \in \mathrm{Tm}(\Gamma, A)\big) \Rightarrow \alpha \in \mathrm{Tm}(\Gamma, A')$;


\item $\big(A \trianglelefteqslant_\Gamma A' \wedge \phi \in \mathcal{C}(\Delta, \Gamma)\big) \Rightarrow A\{ \phi \} \trianglelefteqslant_\Delta A'\{ \phi \}$;

\item $\forall \Gamma \in \mathcal{C}, A \in \mathrm{Ty}(\Gamma) . \, A \trianglelefteqslant_\Gamma \boldsymbol{1}$;

\item $\big(A \trianglelefteqslant_\Gamma A' \wedge B \trianglelefteqslant_{\Gamma . A'} B'\big) \Rightarrow \big(\Pi(A', B) \trianglelefteqslant_\Gamma \Pi(A, B') \wedge \Sigma(A, B) \trianglelefteqslant_\Gamma \Sigma(A', B')\big)$.

\end{enumerate}
\end{definition}

The first axiom requires the reusability of terms. 
In addition, for the \emph{compositional} nature of denotational semantics, the other axioms require that constructions on types preserve $\trianglelefteqslant_\Gamma$.
Note that the last axiom tacitly assumes $B, B' \in \mathrm{Ty}(\Gamma . A)$, and the order between $A$ and $A'$ is \emph{flipped} in Pi-types as fewer morphisms can take more inputs \cite[p.~419]{amadio1998domains}. 
We define game semantics of subtyping based on Definition~\ref{DefLivenessOrdering}:
\begin{definition}[Predicate liveness ordering]
\label{DefPredicateLivenessOrdering}
The \emph{\bfseries predicate (p-) liveness ordering} is a partial order $\trianglelefteqslant$ between p-games $\Gamma$ and $\Delta$ defined by $\Gamma \trianglelefteqslant \Delta \ratio \Leftrightarrow |\Gamma| = |\Delta| \wedge \forall \gamma : |\Gamma| . \, \Gamma(\gamma) \preccurlyeq \Delta(\Gamma)$, and lifted to the one $\trianglelefteqslant_\Gamma$ between linearly dependent p-games $L$ and $R$ over $\Gamma$ by $L \trianglelefteqslant_\Gamma R \ratio \Leftrightarrow |L| = |R| \wedge \forall \gamma : \Gamma . \, L(\gamma) \trianglelefteqslant R(\gamma)$.
\end{definition}

It is easy to see that the p-liveness orderings $\trianglelefteqslant$ and $\trianglelefteqslant_\Gamma$ are partial orders since so is the liveness ordering $\preccurlyeq$ \cite[Theorem~9]{chroboczek2000game}.
Besides, the p-liveness ordering $\trianglelefteqslant_\Gamma$ clearly satisfies the second and the third axioms of Definition~\ref{DefCwFwWithSubtyping}, where $\boldsymbol{1} = \{ T \}$.

Moreover, if $\gamma : \Gamma$ and $\Gamma \trianglelefteqslant \Delta$, then $\gamma : \Delta$ since $\gamma : |\Gamma| = |\Delta|$ and $\overline{\gamma}_{|\Delta|} = \overline{\gamma}_{|\Gamma|} \preccurlyeq \Gamma(\gamma) \preccurlyeq \Delta(\gamma)$.
This observation shows that $\trianglelefteqslant_\Gamma$ also satisfies the first axiom:
\begin{lemma}[Preservation of linear typing under predicate liveness ordering]
\label{LemPreservationOfTyping}
Let $L$ and $R$ be linearly dependent p-games over a p-game $\Gamma$ such that $L \trianglelefteqslant_\Gamma R$.
\begin{enumerate}

\item If $\psi : \Pi_\ell(\Gamma, L)$, then $\psi : \Pi_\ell(\Gamma, R)$;

\item If $\psi : \Pi_\ell(\Gamma, L)$ is winning (resp. w.b.), then so is $\psi : \Pi_\ell(\Gamma, R)$.


\end{enumerate}
\end{lemma}

Further, $\trianglelefteqslant_\Gamma$ satisfies the last axiom as well by:
\begin{lemma}[Preservation of predicate liveness ordering]
\label{LemPreservationOfPredicateLivenessOrdering}
If $\Delta \trianglelefteqslant \Delta'$ and $\Gamma \trianglelefteqslant \Gamma'$, then $\Delta \otimes \Gamma \trianglelefteqslant \Delta' \otimes \Gamma'$, $\Delta \mathbin{\&} \Gamma \trianglelefteqslant \Delta' \mathbin{\&} \Gamma'$, $\oc \Delta \trianglelefteqslant \oc \Delta'$ and $\Delta' \multimap \Gamma \trianglelefteqslant \Delta \multimap \Gamma'$.
Moreover, if $A \trianglelefteqslant_\Gamma A'$ and $B \trianglelefteqslant_{\Gamma . A'} B'$, then $\Pi(A', B) \trianglelefteqslant_\Gamma \Pi(A, B')$ and $\Sigma(A, B) \trianglelefteqslant_\Gamma \Sigma(A', B')$.
(N.b., strictly, the pair $(|B|, \|B\| \upharpoonright \Gamma . A)$, where $\|B\| \upharpoonright \Gamma . A$ is the restriction of $\| B \|$ to $\mathrm{st}(\Gamma . A) \subseteq \mathrm{st}(\Gamma . A')$, not $B$ itself, is an element of $\mathscr{D}(\Gamma . A)$, and similarly for $B'$.)
\end{lemma}
\begin{proof}
For the first part, we focus on linear implication $\multimap$ since the cases of the other constructions are simpler.
First, observe $|\Delta' \multimap \Gamma| = |\Delta'| \multimap |\Gamma| = |\Delta| \multimap |\Gamma'| = |\Delta \multimap \Gamma'|$.
Next, let $\phi : |\Delta' \multimap \Gamma|$; it remains to show $(\Delta' \multimap \Gamma)(\phi) \preccurlyeq (\Delta \multimap \Gamma')(\phi)$, but it follows from $\Delta \trianglelefteqslant \Delta'$ and $\Gamma \trianglelefteqslant \Gamma'$ by induction on the lengths of positions, where the symmetry (between O and P) of the liveness ordering $\preccurlyeq$ is crucial. 

Finally, we show the second part by essentially the same way as the first part.
\end{proof}

Hence, we have finally shown:
\begin{corollary}[Game semantics of subtyping]
$\mathbb{WPG}$ supports subtyping.
\end{corollary}

\section*{Acknowledgements}
The author acknowledges the financial support from the Funai Foundation, and he is grateful to fruitful discussions with Samson Abramsky and Thierry Coquand.


\bibliographystyle{bmc-mathphys} 
\bibliography{CategoricalLogic,GamesAndStrategies,RecursionTheory,PCF,TypeTheoriesAndProgrammingLanguages,HoTT,LinearLogic}      


\begin{thebibliography}{47}
\ifx \bisbn   \undefined \def \bisbn  #1{ISBN #1}\fi
\ifx \binits  \undefined \def \binits#1{#1}\fi
\ifx \bauthor  \undefined \def \bauthor#1{#1}\fi
\ifx \batitle  \undefined \def \batitle#1{#1}\fi
\ifx \bjtitle  \undefined \def \bjtitle#1{#1}\fi
\ifx \bvolume  \undefined \def \bvolume#1{\textbf{#1}}\fi
\ifx \byear  \undefined \def \byear#1{#1}\fi
\ifx \bissue  \undefined \def \bissue#1{#1}\fi
\ifx \bfpage  \undefined \def \bfpage#1{#1}\fi
\ifx \blpage  \undefined \def \blpage #1{#1}\fi
\ifx \burl  \undefined \def \burl#1{\textsf{#1}}\fi
\ifx \doiurl  \undefined \def \doiurl#1{\textsf{#1}}\fi
\ifx \betal  \undefined \def \betal{\textit{et al.}}\fi
\ifx \binstitute  \undefined \def \binstitute#1{#1}\fi
\ifx \binstitutionaled  \undefined \def \binstitutionaled#1{#1}\fi
\ifx \bctitle  \undefined \def \bctitle#1{#1}\fi
\ifx \beditor  \undefined \def \beditor#1{#1}\fi
\ifx \bpublisher  \undefined \def \bpublisher#1{#1}\fi
\ifx \bbtitle  \undefined \def \bbtitle#1{#1}\fi
\ifx \bedition  \undefined \def \bedition#1{#1}\fi
\ifx \bseriesno  \undefined \def \bseriesno#1{#1}\fi
\ifx \blocation  \undefined \def \blocation#1{#1}\fi
\ifx \bsertitle  \undefined \def \bsertitle#1{#1}\fi
\ifx \bsnm \undefined \def \bsnm#1{#1}\fi
\ifx \bsuffix \undefined \def \bsuffix#1{#1}\fi
\ifx \bparticle \undefined \def \bparticle#1{#1}\fi
\ifx \barticle \undefined \def \barticle#1{#1}\fi
\ifx \bconfdate \undefined \def \bconfdate #1{#1}\fi
\ifx \botherref \undefined \def \botherref #1{#1}\fi
\ifx \url \undefined \def \url#1{\textsf{#1}}\fi
\ifx \bchapter \undefined \def \bchapter#1{#1}\fi
\ifx \bbook \undefined \def \bbook#1{#1}\fi
\ifx \bcomment \undefined \def \bcomment#1{#1}\fi
\ifx \oauthor \undefined \def \oauthor#1{#1}\fi
\ifx \citeauthoryear \undefined \def \citeauthoryear#1{#1}\fi
\ifx \endbibitem  \undefined \def \endbibitem {}\fi
\ifx \bconflocation  \undefined \def \bconflocation#1{#1}\fi
\ifx \arxivurl  \undefined \def \arxivurl#1{\textsf{#1}}\fi
\csname PreBibitemsHook\endcsname

\bibitem{martin1975intuitionistic}
\begin{barticle}
\bauthor{\bsnm{Martin-L{\"o}f}, \binits{P.}}:
\batitle{An {I}ntuitionistic {T}heory of {T}ypes: {P}redicative {P}art}.
\bjtitle{Studies in Logic and the Foundations of Mathematics}
\bvolume{80},
\bfpage{73}--\blpage{118}
(\byear{1975})
\end{barticle}
\endbibitem

\bibitem{martin1998intuitionistic}
\begin{barticle}
\bauthor{\bsnm{Martin-L{\"o}f}, \binits{P.}}:
\batitle{{An Intuitionistic Theory of Types}}.
\bjtitle{Twenty-five years of constructive type theory}
\bvolume{36},
\bfpage{127}--\blpage{172}
(\byear{1998})
\end{barticle}
\endbibitem

\bibitem{troelstra1988constructivism}
\begin{botherref}
\oauthor{\bsnm{Troelstra}, \binits{A.S.}},
\oauthor{\bparticle{van} \bsnm{Dalen}, \binits{D.}}:
Constructivism in mathematics. {V}ol. {I}, volume 121 of.
Studies in Logic and the Foundations of Mathematics,
26
(1988)
\end{botherref}
\endbibitem

\bibitem{zermelo1908untersuchungen}
\begin{barticle}
\bauthor{\bsnm{Zermelo}, \binits{E.}}:
\batitle{Untersuchungen {\"u}ber die grundlagen der mengenlehre. i}.
\bjtitle{Mathematische Annalen}
\bvolume{65}(\bissue{2}),
\bfpage{261}--\blpage{281}
(\byear{1908})
\end{barticle}
\endbibitem

\bibitem{fraenkel1922grundlagen}
\begin{barticle}
\bauthor{\bsnm{Fraenkel}, \binits{A.}}:
\batitle{Zu den grundlagen der cantor-zermeloschen mengenlehre}.
\bjtitle{Mathematische annalen}
\bvolume{86}(\bissue{3-4}),
\bfpage{230}--\blpage{237}
(\byear{1922})
\end{barticle}
\endbibitem

\bibitem{martin1982constructive}
\begin{barticle}
\bauthor{\bsnm{Martin-L{\"o}f}, \binits{P.}}:
\batitle{Constructive {M}athematics and {C}omputer {P}rogramming}.
\bjtitle{Studies in Logic and the Foundations of Mathematics}
\bvolume{104},
\bfpage{153}--\blpage{175}
(\byear{1982})
\end{barticle}
\endbibitem

\bibitem{church1940formulation}
\begin{barticle}
\bauthor{\bsnm{Church}, \binits{A.}}:
\batitle{A formulation of the simple theory of types}.
\bjtitle{The journal of symbolic logic}
\bvolume{5}(\bissue{02}),
\bfpage{56}--\blpage{68}
(\byear{1940})
\end{barticle}
\endbibitem

\bibitem{sorensen2006lectures}
\begin{bbook}
\bauthor{\bsnm{S{\o}rensen}, \binits{M.H.}},
\bauthor{\bsnm{Urzyczyn}, \binits{P.}}:
\bbtitle{Lectures on the Curry-Howard Isomorphism}
vol. \bseriesno{149}.
\bpublisher{Elsevier}, \blocation{???}
(\byear{2006})
\end{bbook}
\endbibitem

\bibitem{constable1986implementing}
\begin{botherref}
\oauthor{\bsnm{Constable}, \binits{R.}},
\oauthor{\bsnm{Allen}, \binits{S.}},
\oauthor{\bsnm{Bromley}, \binits{H.}},
\oauthor{\bsnm{Cleaveland}, \binits{W.}},
\oauthor{\bsnm{Cremer}, \binits{J.}},
\oauthor{\bsnm{Harper}, \binits{R.}},
\oauthor{\bsnm{Howe}, \binits{D.}},
\oauthor{\bsnm{Knoblock}, \binits{T.}},
\oauthor{\bsnm{Mendler}, \binits{N.}},
\oauthor{\bsnm{Panangaden}, \binits{P.}}, et al.:
Implementing mathematics with the {N}uprl proof development system
(1986)
\end{botherref}
\endbibitem

\bibitem{hottbook}
\begin{bbook}
\bauthor{\bsnm{{Univalent Foundations Program}}, \binits{T.}}:
\bbtitle{Homotopy Type Theory: Univalent Foundations of Mathematics}.
\bpublisher{\url{https://homotopytypetheory.org/book}},
\blocation{Institute for Advanced Study}
(\byear{2013})
\end{bbook}
\endbibitem

\bibitem{dybjer2016intuitionistic}
\begin{botherref}
\oauthor{\bsnm{Dybjer}, \binits{P.}},
\oauthor{\bsnm{Palmgren}, \binits{E.}}:
Intuitionistic type theory.
Stanford Encyclopedia of Philosophy
(2016)
\end{botherref}
\endbibitem

\bibitem{amadio1998domains}
\begin{bbook}
\bauthor{\bsnm{Amadio}, \binits{R.M.}},
\bauthor{\bsnm{Curien}, \binits{P.-L.}}:
\bbtitle{Domains and Lambda-Calculi}
vol. \bseriesno{46}.
\bpublisher{Cambridge University Press},
\blocation{Cambridge}
(\byear{1998})
\end{bbook}
\endbibitem

\bibitem{girard1987linear}
\begin{barticle}
\bauthor{\bsnm{Girard}, \binits{J.-Y.}}:
\batitle{Linear logic}.
\bjtitle{Theoretical Computer Science}
\bvolume{50}(\bissue{1}),
\bfpage{1}--\blpage{101}
(\byear{1987})
\end{barticle}
\endbibitem

\bibitem{hofmann1998groupoid}
\begin{barticle}
\bauthor{\bsnm{Hofmann}, \binits{M.}},
\bauthor{\bsnm{Streicher}, \binits{T.}}:
\batitle{The {G}roupoid {I}nterpretation of {T}ype {T}heory}.
\bjtitle{Twenty-five years of constructive type theory (Venice, 1995)}
\bvolume{36},
\bfpage{83}--\blpage{111}
(\byear{1998})
\end{barticle}
\endbibitem

\bibitem{streicher2012semantics}
\begin{bbook}
\bauthor{\bsnm{Streicher}, \binits{T.}}:
\bbtitle{Semantics of {T}ype {T}heory: {C}orrectness, {C}ompleteness and
  {I}ndependence {R}esults}.
\bpublisher{Springer}, \blocation{???}
(\byear{2012})
\end{bbook}
\endbibitem

\bibitem{abramsky1997semantics}
\begin{barticle}
\bauthor{\bsnm{Abramsky}, \binits{S.}}, \betal:
\batitle{Semantics of interaction: An introduction to game semantics}.
\bjtitle{Semantics and Logics of Computation}
\bvolume{14},
\bfpage{1}--\blpage{31}
(\byear{1997})
\end{barticle}
\endbibitem

\bibitem{hyland1997game}
\begin{bchapter}
\bauthor{\bsnm{Hyland}, \binits{M.}}:
\bctitle{Game semantics}.
In: \bbtitle{Semantics and Logics of Computation}
vol. \bseriesno{14},
p. \bfpage{131}.
\bpublisher{Cambridge University Press},
\blocation{New York}
(\byear{1997})
\end{bchapter}
\endbibitem

\bibitem{shoenfield1967mathematical}
\begin{bbook}
\bauthor{\bsnm{Shoenfield}, \binits{J.R.}}:
\bbtitle{Mathematical Logic}
vol. \bseriesno{21}.
\bpublisher{Addison-Wesley},
\blocation{Reading}
(\byear{1967})
\end{bbook}
\endbibitem

\bibitem{curien2007definability}
\begin{barticle}
\bauthor{\bsnm{Curien}, \binits{P.-L.}}:
\batitle{Definability and full abstraction}.
\bjtitle{Electronic Notes in Theoretical Computer Science}
\bvolume{172},
\bfpage{301}--\blpage{310}
(\byear{2007})
\end{barticle}
\endbibitem

\bibitem{abramsky2002algorithmic}
\begin{bchapter}
\bauthor{\bsnm{Abramsky}, \binits{S.}}:
\bctitle{Algorithmic game semantics}.
In: \bbtitle{Proof and System-Reliability},
pp. \bfpage{21}--\blpage{47}.
\bpublisher{Springer}, \blocation{???}
(\byear{2002})
\end{bchapter}
\endbibitem

\bibitem{plotkin2004computational}
\begin{barticle}
\bauthor{\bsnm{Plotkin}, \binits{G.}},
\bauthor{\bsnm{Power}, \binits{J.}}:
\batitle{Computational effects and operations: An overview}.
\bjtitle{Electronic Notes in Theoretical Computer Science}
\bvolume{73},
\bfpage{149}--\blpage{163}
(\byear{2004})
\end{barticle}
\endbibitem

\bibitem{abramsky1999game}
\begin{bchapter}
\bauthor{\bsnm{Abramsky}, \binits{S.}},
\bauthor{\bsnm{McCusker}, \binits{G.}}:
\bctitle{Game semantics}.
In: \bbtitle{Computational Logic: Proceedings of the 1997 Marktoberdorf Summer
  School},
pp. \bfpage{1}--\blpage{55}.
\bpublisher{Springer},
\blocation{Berlin, Heidelberg}
(\byear{1999})
\end{bchapter}
\endbibitem

\bibitem{abramsky2015games}
\begin{bchapter}
\bauthor{\bsnm{Abramsky}, \binits{S.}},
\bauthor{\bsnm{Jagadeesan}, \binits{R.}},
\bauthor{\bsnm{V{\'a}k{\'a}r}, \binits{M.}}:
\bctitle{Games for dependent types}.
In: \bbtitle{Automata, Languages, and Programming},
pp. \bfpage{31}--\blpage{43}.
\bpublisher{Springer},
\blocation{Berlin, Heidelberg}
(\byear{2015})
\end{bchapter}
\endbibitem

\bibitem{blot2018extensional}
\begin{bchapter}
\bauthor{\bsnm{Blot}, \binits{V.}},
\bauthor{\bsnm{Laird}, \binits{J.}}:
\bctitle{Extensional and intensional semantic universes: A denotational model
  of dependent types}.
In: \bbtitle{Proceedings of the 33rd Annual ACM/IEEE Symposium on Logic in
  Computer Science},
pp. \bfpage{95}--\blpage{104}
(\byear{2018}).
\bcomment{ACM}
\end{bchapter}
\endbibitem

\bibitem{mannaa2017independence}
\begin{botherref}
\oauthor{\bsnm{Mannaa}, \binits{B.}},
\oauthor{\bsnm{Coquand}, \binits{T.}}:
The independence of markov's principle in type theory.
Logical Methods in Computer Science
\textbf{13}
(2017)
\end{botherref}
\endbibitem

\bibitem{markov1962constructive}
\begin{barticle}
\bauthor{\bsnm{Markov}, \binits{A.A.}}:
\batitle{On constructive mathematics}.
\bjtitle{Trudy Matematicheskogo Instituta imeni VA Steklova}
\bvolume{67},
\bfpage{8}--\blpage{14}
(\byear{1962})
\end{barticle}
\endbibitem

\bibitem{hyland1982effective}
\begin{bchapter}
\bauthor{\bsnm{Hyland}, \binits{J.M.E.}}:
\bctitle{The effective topos}.
In: \bbtitle{Studies in Logic and the Foundations of Mathematics}
vol. \bseriesno{110},
pp. \bfpage{165}--\blpage{216}.
\bpublisher{Elsevier}, \blocation{???}
(\byear{1982})
\end{bchapter}
\endbibitem

\bibitem{pierce2002types}
\begin{bbook}
\bauthor{\bsnm{Pierce}, \binits{B.C.}},
\bauthor{\bsnm{Benjamin}, \binits{C.}}:
\bbtitle{Types and Programming Languages}.
\bpublisher{MIT press}, \blocation{???}
(\byear{2002})
\end{bbook}
\endbibitem

\bibitem{vakar2018game}
\begin{barticle}
\bauthor{\bsnm{V{\'a}k{\'a}r}, \binits{M.}},
\bauthor{\bsnm{Jagadeesan}, \binits{R.}},
\bauthor{\bsnm{Abramsky}, \binits{S.}}:
\batitle{Game semantics for dependent types}.
\bjtitle{Information and Computation}
\bvolume{261},
\bfpage{401}--\blpage{431}
(\byear{2018})
\end{barticle}
\endbibitem

\bibitem{palmgren1990domain}
\begin{barticle}
\bauthor{\bsnm{Palmgren}, \binits{E.}},
\bauthor{\bsnm{Stoltenberg-Hansen}, \binits{V.}}:
\batitle{Domain interpretations of martin-l{\"o}f’s partial type theory}.
\bjtitle{Annals of Pure and Applied Logic}
\bvolume{48}(\bissue{2}),
\bfpage{135}--\blpage{196}
(\byear{1990})
\end{barticle}
\endbibitem

\bibitem{hofmann1997syntax}
\begin{bchapter}
\bauthor{\bsnm{Hofmann}, \binits{M.}}:
\bctitle{Syntax and {S}emantics of {D}ependent {T}ypes}.
In: \bbtitle{Extensional Constructs in Intensional Type Theory},
pp. \bfpage{13}--\blpage{54}.
\bpublisher{Springer}, \blocation{???}
(\byear{1997})
\end{bchapter}
\endbibitem

\bibitem{berry1982sequential}
\begin{barticle}
\bauthor{\bsnm{Berry}, \binits{G.}},
\bauthor{\bsnm{Curien}, \binits{P.-L.}}:
\batitle{Sequential algorithms on concrete data structures}.
\bjtitle{Theoretical Computer Science}
\bvolume{20}(\bissue{3}),
\bfpage{265}--\blpage{321}
(\byear{1982})
\end{barticle}
\endbibitem

\bibitem{abramsky2000full}
\begin{barticle}
\bauthor{\bsnm{Abramsky}, \binits{S.}},
\bauthor{\bsnm{Jagadeesan}, \binits{R.}},
\bauthor{\bsnm{Malacaria}, \binits{P.}}:
\batitle{Full abstraction for {PCF}}.
\bjtitle{Information and Computation}
\bvolume{163}(\bissue{2}),
\bfpage{409}--\blpage{470}
(\byear{2000})
\end{barticle}
\endbibitem

\bibitem{hyland2000full}
\begin{barticle}
\bauthor{\bsnm{Hyland}, \binits{J.M.E.}},
\bauthor{\bsnm{Ong}, \binits{C.-H.}}:
\batitle{On full abstraction for {PCF}: {I}, {II}, and {III}}.
\bjtitle{Information and Computation}
\bvolume{163}(\bissue{2}),
\bfpage{285}--\blpage{408}
(\byear{2000})
\end{barticle}
\endbibitem

\bibitem{mccusker1998games}
\begin{bbook}
\bauthor{\bsnm{McCusker}, \binits{G.}}:
\bbtitle{Games and Full Abstraction for a Functional Metalanguage with
  Recursive Types}.
\bpublisher{Springer},
\blocation{London}
(\byear{1998})
\end{bbook}
\endbibitem

\bibitem{hyland2002games}
\begin{bchapter}
\bauthor{\bsnm{Hyland}, \binits{M.}},
\bauthor{\bsnm{Schalk}, \binits{A.}}:
\bctitle{Games on graphs and sequentially realizable functionals. extended
  abstract}.
In: \bbtitle{Proceedings 17th Annual IEEE Symposium on Logic in Computer
  Science},
pp. \bfpage{257}--\blpage{264}
(\byear{2002}).
\bcomment{IEEE}
\end{bchapter}
\endbibitem

\bibitem{yamada2021game}
\begin{bbook}
\bauthor{\bsnm{Yamada}, \binits{N.}}:
\bbtitle{Game Semantics of Homotopy Type Theory}.
\bpublisher{Talk presented at homotopy type theory electronic seminar talks
  (HoTTEST), Western University, Ontario, Canada}, \blocation{???}
(\byear{2021})
\end{bbook}
\endbibitem

\bibitem{coquand1995semantics}
\begin{barticle}
\bauthor{\bsnm{Coquand}, \binits{T.}}:
\batitle{A semantics of evidence for classical arithmetic}.
\bjtitle{The Journal of Symbolic Logic}
\bvolume{60}(\bissue{1}),
\bfpage{325}--\blpage{337}
(\byear{1995})
\end{barticle}
\endbibitem

\bibitem{yamada2019game}
\begin{barticle}
\bauthor{\bsnm{Yamada}, \binits{N.}}:
\batitle{A game-semantic model of computation}.
\bjtitle{Research in the Mathematical Sciences}
\bvolume{6}(\bissue{1}),
\bfpage{3}
(\byear{2019})
\end{barticle}
\endbibitem

\bibitem{curien1998abstract}
\begin{barticle}
\bauthor{\bsnm{Curien}, \binits{P.-L.}}:
\batitle{Abstract {B}{\"o}hm trees}.
\bjtitle{Mathematical Structures in Computer Science}
\bvolume{8}(\bissue{06}),
\bfpage{559}--\blpage{591}
(\byear{1998})
\end{barticle}
\endbibitem

\bibitem{laird1997full}
\begin{bchapter}
\bauthor{\bsnm{Laird}, \binits{J.}}:
\bctitle{Full abstraction for functional languages with control}.
In: \bbtitle{Proceedings of Twelfth Annual IEEE Symposium on Logic in Computer
  Science},
pp. \bfpage{58}--\blpage{67}
(\byear{1997}).
\bcomment{IEEE}
\end{bchapter}
\endbibitem

\bibitem{chroboczek2000game}
\begin{bchapter}
\bauthor{\bsnm{Chroboczek}, \binits{J.}}:
\bctitle{Game semantics and subtyping}.
In: \bbtitle{Proceedings Fifteenth Annual IEEE Symposium on Logic in Computer
  Science (Cat. No. 99CB36332)},
pp. \bfpage{192}--\blpage{203}
(\byear{2000}).
\bcomment{IEEE}
\end{bchapter}
\endbibitem

\bibitem{abramsky2005game}
\begin{barticle}
\bauthor{\bsnm{Abramsky}, \binits{S.}},
\bauthor{\bsnm{Jagadeesan}, \binits{R.}}:
\batitle{A game semantics for generic polymorphism}.
\bjtitle{Annals of Pure and Applied Logic}
\bvolume{133}(\bissue{1}),
\bfpage{3}--\blpage{37}
(\byear{2005})
\end{barticle}
\endbibitem

\bibitem{laurent2002polarized}
\begin{bchapter}
\bauthor{\bsnm{Laurent}, \binits{O.}}:
\bctitle{Polarized games}.
In: \bbtitle{Proceedings of the 17th Annual IEEE Symposium on Logic in Computer
  Science},
pp. \bfpage{265}--\blpage{274}
(\byear{2002}).
\bcomment{IEEE}
\end{bchapter}
\endbibitem

\bibitem{dybjer1996internal}
\begin{bchapter}
\bauthor{\bsnm{Dybjer}, \binits{P.}}:
\bctitle{{Internal Type Theory}}.
In: \bbtitle{Types for Proofs and Programs},
pp. \bfpage{120}--\blpage{134}.
\bpublisher{Springer}, \blocation{???}
(\byear{1996})
\end{bchapter}
\endbibitem

\bibitem{clairambault2014biequivalence}
\begin{barticle}
\bauthor{\bsnm{Clairambault}, \binits{P.}},
\bauthor{\bsnm{Dybjer}, \binits{P.}}:
\batitle{The {Biequivalence of Locally Cartesian Closed Categories and
  Martin-L{\"o}f Type Theories}}.
\bjtitle{Mathematical Structures in Computer Science}
\bvolume{24}(\bissue{05}),
\bfpage{240501}
(\byear{2014})
\end{barticle}
\endbibitem

\bibitem{streicher1993investigations}
\begin{bbook}
\bauthor{\bsnm{Streicher}, \binits{T.}}:
\bbtitle{Investigations Into {I}ntensional {T}ype {T}heory},
(\byear{1993})
\end{bbook}
\endbibitem

\end{thebibliography}

\newcommand{\BMCxmlcomment}[1]{}

\BMCxmlcomment{

<refgrp>

<bibl id="B1">
  <title><p>An {I}ntuitionistic {T}heory of {T}ypes: {P}redicative
  {P}art</p></title>
  <aug>
    <au><snm>Martin L{\"o}f</snm><fnm>P</fnm></au>
  </aug>
  <source>Studies in Logic and the Foundations of Mathematics</source>
  <publisher>Elsevier</publisher>
  <pubdate>1975</pubdate>
  <volume>80</volume>
  <fpage>73</fpage>
  <lpage>-118</lpage>
</bibl>

<bibl id="B2">
  <title><p>{An Intuitionistic Theory of Types}</p></title>
  <aug>
    <au><snm>Martin L{\"o}f</snm><fnm>P</fnm></au>
  </aug>
  <source>Twenty-five years of constructive type theory</source>
  <publisher>Oxford University Press</publisher>
  <pubdate>1998</pubdate>
  <volume>36</volume>
  <fpage>127</fpage>
  <lpage>-172</lpage>
</bibl>

<bibl id="B3">
  <title><p>Constructivism in Mathematics. {V}ol. {I}, volume 121
  of</p></title>
  <aug>
    <au><snm>Troelstra</snm><fnm>AS</fnm></au>
    <au><snm>Dalen</snm><fnm>D</fnm></au>
  </aug>
  <source>Studies in Logic and the Foundations of Mathematics</source>
  <pubdate>1988</pubdate>
  <fpage>26</fpage>
</bibl>

<bibl id="B4">
  <title><p>Untersuchungen {\"u}ber die Grundlagen der Mengenlehre.
  I</p></title>
  <aug>
    <au><snm>Zermelo</snm><fnm>E</fnm></au>
  </aug>
  <source>Mathematische Annalen</source>
  <publisher>Springer</publisher>
  <pubdate>1908</pubdate>
  <volume>65</volume>
  <issue>2</issue>
  <fpage>261</fpage>
  <lpage>-281</lpage>
</bibl>

<bibl id="B5">
  <title><p>Zu den grundlagen der Cantor-Zermeloschen mengenlehre</p></title>
  <aug>
    <au><snm>Fraenkel</snm><fnm>A</fnm></au>
  </aug>
  <source>Mathematische annalen</source>
  <publisher>Springer</publisher>
  <pubdate>1922</pubdate>
  <volume>86</volume>
  <issue>3-4</issue>
  <fpage>230</fpage>
  <lpage>-237</lpage>
</bibl>

<bibl id="B6">
  <title><p>Constructive {M}athematics and {C}omputer {P}rogramming</p></title>
  <aug>
    <au><snm>Martin L{\"o}f</snm><fnm>P</fnm></au>
  </aug>
  <source>Studies in Logic and the Foundations of Mathematics</source>
  <publisher>Elsevier</publisher>
  <pubdate>1982</pubdate>
  <volume>104</volume>
  <fpage>153</fpage>
  <lpage>-175</lpage>
</bibl>

<bibl id="B7">
  <title><p>A Formulation of the Simple Theory of Types</p></title>
  <aug>
    <au><snm>Church</snm><fnm>A</fnm></au>
  </aug>
  <source>The journal of symbolic logic</source>
  <publisher>Cambridge: Cambridge University Press</publisher>
  <pubdate>1940</pubdate>
  <volume>5</volume>
  <issue>02</issue>
  <fpage>56</fpage>
  <lpage>-68</lpage>
</bibl>

<bibl id="B8">
  <title><p>Lectures on the Curry-Howard isomorphism</p></title>
  <aug>
    <au><snm>S{\o}rensen</snm><fnm>MH</fnm></au>
    <au><snm>Urzyczyn</snm><fnm>P</fnm></au>
  </aug>
  <publisher>Elsevier</publisher>
  <pubdate>2006</pubdate>
  <volume>149</volume>
</bibl>

<bibl id="B9">
  <title><p>Implementing mathematics with the {N}uprl proof development
  system</p></title>
  <aug>
    <au><snm>Constable</snm><fnm>RL</fnm></au>
    <au><snm>Allen</snm><fnm>SF</fnm></au>
    <au><snm>Bromley</snm><fnm>HM</fnm></au>
    <au><snm>Cleaveland</snm><fnm>WR</fnm></au>
    <au><snm>Cremer</snm><fnm>JF</fnm></au>
    <au><snm>Harper</snm><fnm>RW</fnm></au>
    <au><snm>Howe</snm><fnm>DJ</fnm></au>
    <au><snm>Knoblock</snm><fnm>TB</fnm></au>
    <au><snm>Mendler</snm><fnm>NP</fnm></au>
    <au><snm>Panangaden</snm><fnm>P</fnm></au>
    <au><cnm>others</cnm></au>
  </aug>
  <publisher>Prentice-Hall, Inc.</publisher>
  <pubdate>1986</pubdate>
</bibl>

<bibl id="B10">
  <title><p>Homotopy Type Theory: Univalent Foundations of
  Mathematics</p></title>
  <aug>
    <au><snm>{Univalent Foundations Program}</snm><fnm>T</fnm></au>
  </aug>
  <publisher>Institute for Advanced Study:
  \url{https://homotopytypetheory.org/book}</publisher>
  <pubdate>2013</pubdate>
</bibl>

<bibl id="B11">
  <title><p>Intuitionistic Type Theory</p></title>
  <aug>
    <au><snm>Dybjer</snm><fnm>P</fnm></au>
    <au><snm>Palmgren</snm><fnm>E</fnm></au>
  </aug>
  <source>Stanford Encyclopedia of Philosophy</source>
  <pubdate>2016</pubdate>
</bibl>

<bibl id="B12">
  <title><p>Domains and Lambda-Calculi</p></title>
  <aug>
    <au><snm>Amadio</snm><fnm>RM</fnm></au>
    <au><snm>Curien</snm><fnm>PL</fnm></au>
  </aug>
  <publisher>Cambridge: Cambridge University Press</publisher>
  <pubdate>1998</pubdate>
  <issue>46</issue>
</bibl>

<bibl id="B13">
  <title><p>Linear logic</p></title>
  <aug>
    <au><snm>Girard</snm><fnm>JY</fnm></au>
  </aug>
  <source>Theoretical Computer Science</source>
  <publisher>Elsevier</publisher>
  <pubdate>1987</pubdate>
  <volume>50</volume>
  <issue>1</issue>
  <fpage>1</fpage>
  <lpage>-101</lpage>
</bibl>

<bibl id="B14">
  <title><p>The {G}roupoid {I}nterpretation of {T}ype {T}heory</p></title>
  <aug>
    <au><snm>Hofmann</snm><fnm>M</fnm></au>
    <au><snm>Streicher</snm><fnm>T</fnm></au>
  </aug>
  <source>Twenty-five years of constructive type theory (Venice, 1995)</source>
  <pubdate>1998</pubdate>
  <volume>36</volume>
  <fpage>83</fpage>
  <lpage>-111</lpage>
</bibl>

<bibl id="B15">
  <title><p>Semantics of {T}ype {T}heory: {C}orrectness, {C}ompleteness and
  {I}ndependence {R}esults</p></title>
  <aug>
    <au><snm>Streicher</snm><fnm>T</fnm></au>
  </aug>
  <publisher>Springer Science \& Business Media</publisher>
  <pubdate>2012</pubdate>
</bibl>

<bibl id="B16">
  <title><p>Semantics of Interaction: An Introduction to Game
  Semantics</p></title>
  <aug>
    <au><snm>Abramsky</snm><fnm>S</fnm></au>
    <au><cnm>others</cnm></au>
  </aug>
  <source>Semantics and Logics of Computation</source>
  <publisher>New York: Cambridge University Press</publisher>
  <pubdate>1997</pubdate>
  <volume>14</volume>
  <fpage>1</fpage>
  <lpage>-31</lpage>
</bibl>

<bibl id="B17">
  <title><p>Game Semantics</p></title>
  <aug>
    <au><snm>Hyland</snm><fnm>M</fnm></au>
  </aug>
  <source>Semantics and Logics of Computation</source>
  <publisher>New York: Cambridge University Press</publisher>
  <pubdate>1997</pubdate>
  <volume>14</volume>
  <fpage>131</fpage>
</bibl>

<bibl id="B18">
  <title><p>Mathematical Logic</p></title>
  <aug>
    <au><snm>Shoenfield</snm><fnm>JR</fnm></au>
  </aug>
  <publisher>Reading: Addison-Wesley</publisher>
  <pubdate>1967</pubdate>
  <volume>21</volume>
</bibl>

<bibl id="B19">
  <title><p>Definability and full abstraction</p></title>
  <aug>
    <au><snm>Curien</snm><fnm>PL</fnm></au>
  </aug>
  <source>Electronic Notes in Theoretical Computer Science</source>
  <publisher>Elsevier</publisher>
  <pubdate>2007</pubdate>
  <volume>172</volume>
  <fpage>301</fpage>
  <lpage>-310</lpage>
</bibl>

<bibl id="B20">
  <title><p>Algorithmic game semantics</p></title>
  <aug>
    <au><snm>Abramsky</snm><fnm>S</fnm></au>
  </aug>
  <source>Proof and System-Reliability</source>
  <publisher>Springer</publisher>
  <pubdate>2002</pubdate>
  <fpage>21</fpage>
  <lpage>-47</lpage>
</bibl>

<bibl id="B21">
  <title><p>Computational Effects and Operations: An Overview</p></title>
  <aug>
    <au><snm>Plotkin</snm><fnm>G</fnm></au>
    <au><snm>Power</snm><fnm>J</fnm></au>
  </aug>
  <source>Electronic Notes in Theoretical Computer Science</source>
  <pubdate>2004</pubdate>
  <volume>73</volume>
  <fpage>149</fpage>
  <lpage>-163</lpage>
</bibl>

<bibl id="B22">
  <title><p>Game Semantics</p></title>
  <aug>
    <au><snm>Abramsky</snm><fnm>S</fnm></au>
    <au><snm>McCusker</snm><fnm>G</fnm></au>
  </aug>
  <source>Computational Logic: Proceedings of the 1997 Marktoberdorf Summer
  School</source>
  <publisher>Berlin, Heidelberg: Springer</publisher>
  <pubdate>1999</pubdate>
  <fpage>1</fpage>
  <lpage>-55</lpage>
</bibl>

<bibl id="B23">
  <title><p>Games for Dependent Types</p></title>
  <aug>
    <au><snm>Abramsky</snm><fnm>S</fnm></au>
    <au><snm>Jagadeesan</snm><fnm>R</fnm></au>
    <au><snm>V{\'a}k{\'a}r</snm><fnm>M</fnm></au>
  </aug>
  <source>Automata, Languages, and Programming</source>
  <publisher>Berlin, Heidelberg: Springer</publisher>
  <pubdate>2015</pubdate>
  <fpage>31</fpage>
  <lpage>-43</lpage>
</bibl>

<bibl id="B24">
  <title><p>Extensional and Intensional Semantic Universes: A Denotational
  Model of Dependent Types</p></title>
  <aug>
    <au><snm>Blot</snm><fnm>V</fnm></au>
    <au><snm>Laird</snm><fnm>J</fnm></au>
  </aug>
  <source>Proceedings of the 33rd Annual ACM/IEEE Symposium on Logic in
  Computer Science</source>
  <pubdate>2018</pubdate>
  <fpage>95</fpage>
  <lpage>-104</lpage>
</bibl>

<bibl id="B25">
  <title><p>The Independence of Markov's Principle in Type Theory</p></title>
  <aug>
    <au><snm>Mannaa</snm><fnm>B</fnm></au>
    <au><snm>Coquand</snm><fnm>T</fnm></au>
  </aug>
  <source>Logical Methods in Computer Science</source>
  <publisher>Episciences. org</publisher>
  <pubdate>2017</pubdate>
  <volume>13</volume>
</bibl>

<bibl id="B26">
  <title><p>On constructive mathematics</p></title>
  <aug>
    <au><snm>Markov</snm><fnm>AA</fnm></au>
  </aug>
  <source>Trudy Matematicheskogo Instituta imeni VA Steklova</source>
  <publisher>Russian Academy of Sciences, Steklov Mathematical Institute of
  Russian~…</publisher>
  <pubdate>1962</pubdate>
  <volume>67</volume>
  <fpage>8</fpage>
  <lpage>-14</lpage>
</bibl>

<bibl id="B27">
  <title><p>The effective topos</p></title>
  <aug>
    <au><snm>Hyland</snm><fnm>JME</fnm></au>
  </aug>
  <source>Studies in Logic and the Foundations of Mathematics</source>
  <publisher>Elsevier</publisher>
  <pubdate>1982</pubdate>
  <volume>110</volume>
  <fpage>165</fpage>
  <lpage>-216</lpage>
</bibl>

<bibl id="B28">
  <title><p>Types and programming languages</p></title>
  <aug>
    <au><snm>Pierce</snm><fnm>BC</fnm></au>
    <au><snm>Benjamin</snm><fnm>C</fnm></au>
  </aug>
  <publisher>MIT press</publisher>
  <pubdate>2002</pubdate>
</bibl>

<bibl id="B29">
  <title><p>Game semantics for dependent types</p></title>
  <aug>
    <au><snm>V{\'a}k{\'a}r</snm><fnm>M</fnm></au>
    <au><snm>Jagadeesan</snm><fnm>R</fnm></au>
    <au><snm>Abramsky</snm><fnm>S</fnm></au>
  </aug>
  <source>Information and Computation</source>
  <publisher>Elsevier</publisher>
  <pubdate>2018</pubdate>
  <volume>261</volume>
  <fpage>401</fpage>
  <lpage>-431</lpage>
</bibl>

<bibl id="B30">
  <title><p>Domain interpretations of Martin-L{\"o}f’s partial type
  theory</p></title>
  <aug>
    <au><snm>Palmgren</snm><fnm>E</fnm></au>
    <au><snm>Stoltenberg Hansen</snm><fnm>V</fnm></au>
  </aug>
  <source>Annals of Pure and Applied Logic</source>
  <publisher>North-Holland</publisher>
  <pubdate>1990</pubdate>
  <volume>48</volume>
  <issue>2</issue>
  <fpage>135</fpage>
  <lpage>-196</lpage>
</bibl>

<bibl id="B31">
  <title><p>Syntax and {S}emantics of {D}ependent {T}ypes</p></title>
  <aug>
    <au><snm>Hofmann</snm><fnm>M</fnm></au>
  </aug>
  <source>Extensional Constructs in Intensional Type Theory</source>
  <publisher>Springer</publisher>
  <pubdate>1997</pubdate>
  <fpage>13</fpage>
  <lpage>-54</lpage>
</bibl>

<bibl id="B32">
  <title><p>Sequential algorithms on concrete data structures</p></title>
  <aug>
    <au><snm>Berry</snm><fnm>G</fnm></au>
    <au><snm>Curien</snm><fnm>PL</fnm></au>
  </aug>
  <source>Theoretical Computer Science</source>
  <publisher>Elsevier</publisher>
  <pubdate>1982</pubdate>
  <volume>20</volume>
  <issue>3</issue>
  <fpage>265</fpage>
  <lpage>-321</lpage>
</bibl>

<bibl id="B33">
  <title><p>Full Abstraction for {PCF}</p></title>
  <aug>
    <au><snm>Abramsky</snm><fnm>S</fnm></au>
    <au><snm>Jagadeesan</snm><fnm>R</fnm></au>
    <au><snm>Malacaria</snm><fnm>P</fnm></au>
  </aug>
  <source>Information and Computation</source>
  <publisher>Elsevier</publisher>
  <pubdate>2000</pubdate>
  <volume>163</volume>
  <issue>2</issue>
  <fpage>409</fpage>
  <lpage>-470</lpage>
</bibl>

<bibl id="B34">
  <title><p>On Full Abstraction for {PCF}: {I}, {II}, and {III}</p></title>
  <aug>
    <au><snm>Hyland</snm><fnm>JME</fnm></au>
    <au><snm>Ong</snm><fnm>C HL</fnm></au>
  </aug>
  <source>Information and Computation</source>
  <publisher>Elsevier</publisher>
  <pubdate>2000</pubdate>
  <volume>163</volume>
  <issue>2</issue>
  <fpage>285</fpage>
  <lpage>-408</lpage>
</bibl>

<bibl id="B35">
  <title><p>Games and Full Abstraction for a Functional Metalanguage with
  Recursive Types</p></title>
  <aug>
    <au><snm>McCusker</snm><fnm>G</fnm></au>
  </aug>
  <publisher>London: Springer Science \& Business Media</publisher>
  <pubdate>1998</pubdate>
</bibl>

<bibl id="B36">
  <title><p>Games on graphs and sequentially realizable functionals. Extended
  abstract</p></title>
  <aug>
    <au><snm>Hyland</snm><fnm>M</fnm></au>
    <au><snm>Schalk</snm><fnm>A</fnm></au>
  </aug>
  <source>Proceedings 17th Annual IEEE Symposium on Logic in Computer
  Science</source>
  <pubdate>2002</pubdate>
  <fpage>257</fpage>
  <lpage>-264</lpage>
</bibl>

<bibl id="B37">
  <title><p>Game semantics of homotopy type theory</p></title>
  <aug>
    <au><snm>Yamada</snm><fnm>N</fnm></au>
  </aug>
  <source>Talk presented at homotopy type theory electronic seminar talks
  (HoTTEST), Western University, Ontario, Canada</source>
  <pubdate>2021</pubdate>
</bibl>

<bibl id="B38">
  <title><p>A semantics of evidence for classical arithmetic</p></title>
  <aug>
    <au><snm>Coquand</snm><fnm>T</fnm></au>
  </aug>
  <source>The Journal of Symbolic Logic</source>
  <publisher>JSTOR</publisher>
  <pubdate>1995</pubdate>
  <volume>60</volume>
  <issue>1</issue>
  <fpage>325</fpage>
  <lpage>-337</lpage>
</bibl>

<bibl id="B39">
  <title><p>A game-semantic model of computation</p></title>
  <aug>
    <au><snm>Yamada</snm><fnm>N</fnm></au>
  </aug>
  <source>Research in the Mathematical Sciences</source>
  <publisher>Springer</publisher>
  <pubdate>2019</pubdate>
  <volume>6</volume>
  <issue>1</issue>
  <fpage>3</fpage>
</bibl>

<bibl id="B40">
  <title><p>Abstract {B}{\"o}hm Trees</p></title>
  <aug>
    <au><snm>Curien</snm><fnm>PL</fnm></au>
  </aug>
  <source>Mathematical Structures in Computer Science</source>
  <publisher>Cambridge Univ Press</publisher>
  <pubdate>1998</pubdate>
  <volume>8</volume>
  <issue>06</issue>
  <fpage>559</fpage>
  <lpage>-591</lpage>
</bibl>

<bibl id="B41">
  <title><p>Full abstraction for functional languages with control</p></title>
  <aug>
    <au><snm>Laird</snm><fnm>J</fnm></au>
  </aug>
  <source>Proceedings of Twelfth Annual IEEE Symposium on Logic in Computer
  Science</source>
  <pubdate>1997</pubdate>
  <fpage>58</fpage>
  <lpage>-67</lpage>
</bibl>

<bibl id="B42">
  <title><p>Game semantics and subtyping</p></title>
  <aug>
    <au><snm>Chroboczek</snm><fnm>J</fnm></au>
  </aug>
  <source>Proceedings Fifteenth Annual IEEE Symposium on Logic in Computer
  Science (Cat. No. 99CB36332)</source>
  <pubdate>2000</pubdate>
  <fpage>192</fpage>
  <lpage>-203</lpage>
</bibl>

<bibl id="B43">
  <title><p>A Game Semantics for Generic Polymorphism</p></title>
  <aug>
    <au><snm>Abramsky</snm><fnm>S</fnm></au>
    <au><snm>Jagadeesan</snm><fnm>R</fnm></au>
  </aug>
  <source>Annals of Pure and Applied Logic</source>
  <publisher>Elsevier</publisher>
  <pubdate>2005</pubdate>
  <volume>133</volume>
  <issue>1</issue>
  <fpage>3</fpage>
  <lpage>-37</lpage>
</bibl>

<bibl id="B44">
  <title><p>Polarized games</p></title>
  <aug>
    <au><snm>Laurent</snm><fnm>O</fnm></au>
  </aug>
  <source>Proceedings of the 17th Annual IEEE Symposium on Logic in Computer
  Science</source>
  <pubdate>2002</pubdate>
  <fpage>265</fpage>
  <lpage>-274</lpage>
</bibl>

<bibl id="B45">
  <title><p>{Internal Type Theory}</p></title>
  <aug>
    <au><snm>Dybjer</snm><fnm>P</fnm></au>
  </aug>
  <source>Types for Proofs and Programs</source>
  <publisher>Springer</publisher>
  <pubdate>1996</pubdate>
  <fpage>120</fpage>
  <lpage>-134</lpage>
</bibl>

<bibl id="B46">
  <title><p>The {Biequivalence of Locally Cartesian Closed Categories and
  Martin-L{\"o}f Type Theories}</p></title>
  <aug>
    <au><snm>Clairambault</snm><fnm>P</fnm></au>
    <au><snm>Dybjer</snm><fnm>P</fnm></au>
  </aug>
  <source>Mathematical Structures in Computer Science</source>
  <pubdate>2014</pubdate>
  <volume>24</volume>
  <issue>05</issue>
  <fpage>e240501</fpage>
</bibl>

<bibl id="B47">
  <title><p>Investigations into {I}ntensional {T}ype {T}heory</p></title>
  <aug>
    <au><snm>Streicher</snm><fnm>T</fnm></au>
  </aug>
  <pubdate>1993</pubdate>
</bibl>

</refgrp>
} 

\end{document}